\title{Sharp Dirac's Theorem for DP-Critical Graphs}
\date{}
\author{
	Anton~Bernshteyn\thanks{Department of Mathematics, University of Illinois at Urbana--Champaign, IL, USA. Email address: \href{mailto:bernsht2@illinois.edu}{\texttt{bernsht2@illinois.edu}}. Research of this author is supported by the Illinois Distinguished Fellowship.}
	\and Alexandr Kostochka\thanks{Department of Mathematics, University of Illinois at Urbana--Champaign, IL, USA and
		Sobolev Institute of Mathematics, Novosibirsk 630090, Russia. Email address: \href{mailto:kostochk@math.uiuc.edu}{\texttt{kostochk@math.uiuc.edu}}. Research of this author is supported in part by NSF grant
		DMS-1600592 and grants 15-01-05867 and 16-01-00499 of the Russian Foundation for Basic Research.}}
\newtheorem{theo}{Theorem}[section]
\newtheorem{prop}[theo]{Proposition}
\newtheorem{lemma}[theo]{Lemma}
\newtheorem{corl}[theo]{Corollary}
\newtheorem{problem}[theo]{Problem}
\newtheorem{claim}{Claim}[theo]
\newcommand*{\myproofname}{Proof}
\newenvironment{claimproof}[1][\myproofname]{\begin{proof}[#1]}{\end{proof}}
\theoremstyle{definition}
\newtheorem{defn}[theo]{Definition}
\newtheorem{exmp}[theo]{Example}
\newtheorem{remk}[theo]{Remark}
\newcommand{\0}{\varnothing}
\newcommand{\set}[1]{\{#1\}}
\newcommand{\dom}{\operatorname{dom}}
\newcommand{\N}{\mathbb{N}}
\renewcommand{\c}[1]{{#1}^c}
\renewcommand{\epsilon}{\varepsilon}
\renewcommand{\phi}{\varphi}
\newcommand{\powerset}[1]{\operatorname{Pow}(#1)}
\newcommand{\defeq}{\coloneqq}
\newcommand{\I}{\mathbf{Ind}}
\newcommand{\D}{\mathbf{Dir}}
\newcommand{\Cov}[1]{\mathscr{#1}}
\numberwithin{equation}{section}
\begin{document}

	\maketitle
	
	\begin{abstract}
		\noindent \emph{Correspondence coloring}, or \emph{DP-coloring}, is a generalization of list coloring introduced recently by Dvo\v r\' ak and Postle~\cite{DP}. 
		In this paper we establish a version of Dirac's theorem on the minimum number of edges in critical graphs~\cite{Dirac1} in the framework of DP-colorings. 
		A corollary of our main result answers a question posed by Kostochka and Stiebitz~\cite{KostochkaStiebitz} on classifying list-critical graphs that satisfy Dirac's bound with equality.
	\end{abstract}

	\section{Introduction}
	
	All graphs considered here are finite, undirected, and simple. We use $\N$ to denote the set of all nonnegative integers. For $k \in \N$, let $[k] \defeq \set{1\ldots, k}$. For a set $S$, we use $\powerset{S}$ to denote the power set of $S$, i.e., the set of all subsets of $S$. For a function $f \colon A \to B$ and a subset $S \subseteq A$, we use $f \vert_S$ to denote the restriction of $f$ to $S$. For a graph $G$, $V(G)$ and $E(G)$ denote the vertex and the edge sets of $G$, respectively. For a set $U \subseteq V(G)$, $G[U]$ is the subgraph of $G$ induced by $U$. Let $G - U \defeq G[V(G) \setminus U]$, and for $u \in V(G)$, let $G - u \defeq G - \set{u}$. For two subsets $U_1$, $U_2 \subseteq V(G)$, $E_G(U_1, U_2) \subseteq E(G)$ denotes the set of all edges in $G$ with one endpoint in $U_1$ and the other one in $U_2$. For $u \in V(G)$, $N_G(u)\subset V(G)$ denotes the set of all neighbors of $u$ and $\deg_G(u) \defeq |N_G(u)|$ denotes the degree of $u$ in $G$. We let $\Delta(G) \defeq \max_{u \in V(G)} \deg_G(u)$ and $\delta(G) \defeq \min_{u \in V(G)} \deg_G(u)$ denote the maximum and the minimum degrees of $G$, respectively.
	For a subset $U \subseteq V(G)$, let $N_G(U) \defeq \bigcup_{u \in U} N_G(u)$ denote the neighborhood of $U$ in $G$. A~set $I \subseteq V(G)$ is {\em independent} if $I \cap N_G(I) = \0$, i.e., if $uv \not \in E(G)$ for all $u$, $v \in I$. We denote the family of all independent sets in a graph $G$ by $\I(G)$. The complete graph on $n$ vertices is denoted by~$K_n$.
	
	\subsection{Critical graphs and the theorems of Brooks, Dirac, and Gallai}
	
	Recall that a \emph{proper coloring} of a graph $G$ is a function $f \colon V(G) \to Y$, where $Y$ is a set, whose elements are referred to as \emph{colors}, 
	such that  $f(u) \neq f(v)$ for each edge $uv \in E(G)$. The smallest $k \in \N$ such that there exists a proper coloring $f \colon V(G) \to Y$ with $|Y| = k$ is called the \emph{chromatic number} of $G$ and is denoted by $\chi(G)$.
	
	For $k \in \N$, a graph $G$ is said to be \emph{$(k+1)$-vertex-critical} if $\chi(G) = k+1$ but $\chi(G-u) \leq k$ for all $u \in V(G)$. We will only consider vertex-critical graphs in this paper, so for brevity we will call them simply \emph{critical}. Since every graph~$G$ with $\chi(G) > k$ contains a $(k+1)$-critical subgraph, understanding the structure of critical graphs is crucial for the study of graph coloring. We will only consider $k \geq 3$, the case $k \leq 2$ being trivial (the only $1$-critical graph is $K_1$, the only $2$-critical graph is $K_2$, and the only $3$-critical graphs are odd cycles).
	
	Let $k \geq 3$ and suppose that $G$ is a $(k+1)$-critical graph with $n$ vertices and $m$ edges. A classical problem in the study of critical graphs is to understand how small $m$ can be depending on~$n$ and $k$.  Evidently, $\delta(G) \geq k$; in particular, $2m \geq kn$. Brooks's Theorem is equivalent to the assertion that the only situation in which $2m = kn$ is when $G \cong K_{k+1}$:
	
	\begin{theo}[{Brooks~\cite[Theorem~14.4]{BondyMurty}}]
		Let $k \geq 3$ and let $G$ be a $(k+1)$-critical graph distinct from $K_{k+1}$. Set $n \defeq |V(G)|$ and $m \defeq |E(G)|$. Then
		$$
		2m > kn.
		$$
	\end{theo}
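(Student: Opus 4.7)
The plan is to reduce the statement to the classical form of Brooks's theorem (connected graphs satisfy $\chi \leq \Delta$ unless they are complete or odd cycles) by interpreting the hypothesis $2m = kn$ as a regularity condition.

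First, I would record the standard degree lower bound for critical graphs: since $G$ is $(k+1)$-critical, for every $v \in V(G)$ the graph $G - v$ admits a proper $k$-coloring $f$. If $\deg_G(v) \leq k-1$, then fewer than $k$ colors appear on $N_G(v)$, so we could extend $f$ to a proper $k$-coloring of $G$, contradicting $\chi(G) = k+1$. Hence $\delta(G) \geq k$, which gives $2m = \sum_{v \in V(G)} \deg_G(v) \geq kn$.

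Next, I would argue by contradiction, assuming $2m = kn$. Combined with the pointwise bound $\deg_G(v) \geq k$, this forces equality at every vertex, so $G$ is $k$-regular. I would also note that $G$ is connected: otherwise some connected component $H$ would already satisfy $\chi(H) = k+1$, contradicting the minimality clause in the definition of criticality applied to any vertex outside $H$. With connectivity and $k$-regularity in hand, I would invoke the classical Brooks theorem: since $\Delta(G) = k \geq 3$, $G$ is not an odd cycle, and since $\chi(G) = k+1 > \Delta(G)$, Brooks's theorem forces $G$ to be a complete graph. A $k$-regular complete graph is exactly $K_{k+1}$, contradicting the hypothesis $G \not\cong K_{k+1}$.

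There is no real obstacle here — the substantive content is the classical Brooks theorem itself, which is being cited as a black box. The only thing to verify carefully is the case analysis that rules out the odd-cycle exception (handled by $k \geq 3$) and the bookkeeping that turns the average-degree equation $2m = kn$ into $k$-regularity via the vertex-wise bound $\delta(G) \geq k$.
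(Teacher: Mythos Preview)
Your argument is correct: the minimum-degree bound $\delta(G)\geq k$ gives $2m\geq kn$, equality forces $k$-regularity, criticality gives connectivity, and then the classical form of Brooks's theorem (with $k\geq 3$ ruling out the odd-cycle exception) forces $G\cong K_{k+1}$.

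However, there is nothing to compare against: the paper does not prove this statement. It is quoted as a classical result with a citation to Bondy--Murty and serves only as historical background before Dirac's sharpening in Theorem~\ref{theo:Dirac}. Your write-up is essentially the standard textbook reduction of the ``$2m>kn$'' formulation to the usual ``$\chi\leq\Delta$ unless complete or odd cycle'' formulation, and nothing more is needed here.
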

	
	Brooks's theorem was subsequently sharpened by Dirac, who established a linear in $k$ lower bound on the difference $2m - kn$:
	
	\begin{theo}[{Dirac~\cite[Theorem~15]{Dirac1}}]\label{theo:Dirac}
		Let $k \geq 3$ and let $G$ be a $(k+1)$-critical graph distinct from $K_{k+1}$. Set $n \defeq |V(G)|$ and $m \defeq |E(G)|$. Then
		\begin{equation}\label{eq:Dirac}
		2m \geq kn + k-2.
		\end{equation}
	\end{theo}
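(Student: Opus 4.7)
The plan is to analyze the degree sequence of $G$ using the structure of its subgraph induced by minimum-degree vertices. Let $D \defeq \{v \in V(G) : \deg_G(v) = k\}$ and $H \defeq V(G) \setminus D$; since $\delta(G) \geq k$, every vertex of $H$ has degree at least $k+1$. The degree sum gives
$$
2m - kn \,=\, \sum_{v\in H}(\deg_G(v) - k),
$$
so the theorem asks this ``total excess'' to be at least $k-2$. Known constructions (e.g., the Haj\'os construction on two copies of $K_{k+1}$) attain the bound with $|H|=1$ and a single vertex of excess $k-2$, so the argument cannot reduce merely to counting high vertices.

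The main tool will be the Erd\H{o}s--Rubin--Taylor degree-choosability theorem, applied to each connected component $F$ of $G[D]$. By criticality, $G - V(F)$ admits a proper $k$-coloring $\phi$; assigning each $v \in V(F)$ the list $L(v) \defeq [k]\setminus\{\phi(u):u\in N_G(v)\setminus V(F)\}$ yields $|L(v)| \geq k - |N_G(v)\setminus V(F)| = \deg_F(v)$. Since $\phi$ cannot be extended to a $k$-coloring of $G$, $F$ is not $L$-colorable, which by Erd\H{o}s--Rubin--Taylor forces $F$ to be a \emph{Gallai tree}---a connected graph in which every block is a clique or an odd cycle---together with the tight equality $|L(v)| = \deg_F(v)$ for every $v$ and lists uniform on each block. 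Because $G \neq K_{k+1}$, no block of $F$ equals $K_{k+1}$, so every block of $G[D]$ has at most $k$ vertices.

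The final step is a counting argument. The tightness $|L(v)| = \deg_F(v)$ forces the $k-\deg_F(v)$ external neighbors of each $v \in V(F)$ to receive pairwise distinct $\phi$-colors, constraining the bipartite structure between $V(F)$ and its external boundary. Summing these constraints over the components of $G[D]$---taking into account the block sizes permitted by the Gallai tree structure, the inter-component adjacencies within $D$, and the contribution of edges from $D$ to $H$---one derives the inequality $\sum_{v\in H}(\deg_G(v)-k) \geq k-2$. The main obstacle is this last bookkeeping step: the delicate case is when $G[D]$ is essentially all of $G$ and is a single Gallai tree attached to a very small set $H$, where one must use the block-uniformity of the lists $L$ together with the rigidity of Gallai-tree list colorings to show that the few vertices of $H$ collectively carry an excess of at least $k-2$. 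This is the content of Dirac's original argument, which traces ``charge'' from each block of $G[D]$ through its cut vertices to the boundary of $H$.
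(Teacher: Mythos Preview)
The paper does not give its own proof of Theorem~\ref{theo:Dirac}; it is quoted as a classical result of Dirac and serves only as background for the paper's main contribution, Theorem~\ref{theo:sharp_DP_Dirac}, whose lengthy proof specializes to a (much stronger) proof of Theorem~\ref{theo:Dirac}. So there is no short argument in the paper to compare your proposal against.

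That said, your outline has a genuine gap. Everything through ``$G[D]$ is a Gallai forest with no $K_{k+1}$-block and the lists are tight and block-uniform'' is correct and standard, but that structural information by itself does not deliver $2m-kn\geq k-2$. The natural counting you gesture at---each component $F$ of $G[D]$ sends $\sum_{v\in V(F)}(k-\deg_F(v))\geq k$ edges into $H$---is exactly Proposition~\ref{prop:kGDP} of the paper, and it is far too weak on its own: when $|H|=1$ and $G[D]$ is connected it yields only $\deg(w)\geq k+1$ for the unique high vertex $w$, whereas Dirac's bound requires $\deg(w)\geq 2k-2$. Your final paragraph concedes that the ``bookkeeping step'' is the main obstacle, and indeed it is essentially the entire proof: bridging the gap from the Gallai-tree structure to the inequality $\epsilon(H)\geq k-2$ requires either Dirac's original extremal argument or a substantial case analysis of how the blocks of $G[D]$ attach to $H$ (as in Kostochka--Stiebitz for the list version, or the several pages of Sections~2--3 here for the DP version). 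The phrases ``summing these constraints'' and ``tracing charge through cut vertices'' do not constitute such an argument. Finally, your attribution of this method to Dirac is anachronistic: his 1957 proof predates Gallai's 1963 structure theorem and the 1979 Erd\H{o}s--Rubin--Taylor theorem on which your plan rests.
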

	
	Bound~\eqref{eq:Dirac} is sharp in the sense that for every $k\geq 3$, there exist $(k+1)$-critical graphs that satisfy $2m = kn + k - 2$. However, for each $k$, there are only finitely many such graphs; in fact, they admit a simple characterization, which we present below.
	
	\begin{defn}
		Let $k \geq 3$. A graph $G$ is \emph{$k$-Dirac} if its vertex set can be partitioned into three subsets $V_1$, $V_2$, $V_3$ such that:
		\begin{itemize}
			\item[--] $|V_1| = k$, $|V_2| = k-1$, and $|V_3| = 2$;
			\item[--] the graphs $G[V_1]$ and $G[V_2]$ are complete;
			\item[--] each vertex in $V_1$ is adjacent to exactly one vertex in $V_3$;
			\item[--] each vertex in $V_3$ is adjacent to at least one vertex in $V_1$;
			\item[--] each vertex in $V_2$ is adjacent to both vertices in $V_3$; and
			\item[--] $G$ has no other edges.
		\end{itemize}
		We denote the family of all $k$-Dirac graphs by~$\D_k$.
	\end{defn}
	
	\begin{theo}[{Dirac~\cite[Theorem, p.~152]{Dirac2}}]\label{theo:sharp_Dirac}
		Let $k \geq 3$ and let $G$ be a $(k+1)$-critical graph distinct from $K_{k+1}$. Set $n \defeq |V(G)|$ and $m \defeq |E(G)|$. Then
		$$
		2m = kn + k-2\,\Longleftrightarrow\, G \in \D_k.
		$$
	\end{theo}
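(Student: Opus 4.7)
The plan is to handle the two directions of the biconditional separately, since the ``if'' direction is a short verification while the ``only if'' direction carries the substantive argument. For the ``if'' direction, I would take $G \in \D_k$ with the partition $V_1 \sqcup V_2 \sqcup V_3$ and first compute $|V(G)| = 2k + 1$ and $|E(G)| = \binom{k}{2} + \binom{k-1}{2} + k + 2(k-1) = k^2 + k - 1$, so that $2m = kn + k - 2$. To verify $\chi(G) = k + 1$, I would argue that in any proper $k$-coloring of $G$ the clique $G[V_2] = K_{k-1}$ uses $k - 1$ colors, and its two common neighbors in $V_3$ are therefore forced to take the unique remaining color; this shared color is then forbidden to every $V_1$-neighbor of $V_3$, contradicting the facts that $G[V_1] = K_k$ uses all $k$ colors and every vertex of $V_1$ has a $V_3$-neighbor. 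A proper $(k+1)$-coloring is obtained by using $[k]$ on $V_1$, $[k-1]$ on $V_2$, and one fresh color on both vertices of $V_3$. Criticality then follows by exhibiting explicit proper $k$-colorings of $G - v$ case-by-case as $v$ ranges over $V_1$, $V_2$, $V_3$, each case relaxing precisely the obstruction above.

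For the ``only if'' direction, let $G$ be $(k+1)$-critical with $G \neq K_{k+1}$ and $2m = kn + k - 2$. Put $V_L \defeq \set{v \in V(G) : \deg_G(v) = k}$ and $V_H \defeq V(G) \setminus V_L$; summing degrees and using $\delta(G) \geq k$ gives $\sum_{v \in V_H}(\deg_G(v) - k) = k - 2$, so $1 \leq |V_H| \leq k - 2$. The structural input is Gallai's theorem, which asserts that each block of $G[V_L]$ is a complete graph $K_r$ with $r \leq k$ or an odd cycle. I would combine this with the edge identity $e_{LH} + 2 e_H = k|V_H| + k - 2$, where $e_{LH}$ and $e_H$ count edges between $V_L$ and $V_H$ and inside $V_H$ respectively, together with the observation that a non-cut vertex of $G[V_L]$ lying in a single $K_r$-block has exactly $k - r + 1$ neighbors in $V_H$. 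Noting that $\D_k$ itself accommodates both $|V_H| = 1$ and $|V_H| = 2$, depending on how the $V_1$--$V_3$ edges split between the two vertices of $V_3$, the plan is to show that $G[V_L]$ is either a disjoint union $K_k \sqcup K_{k-1}$ (the case $|V_H| = 2$) or two copies of $K_k$ joined by a single edge at cut-vertices (the case $|V_H| = 1$), and then to identify the sets $V_1$, $V_2$, $V_3$ from this decomposition.

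Each of these features must be forced using $(k+1)$-criticality: for any vertex $v$, the graph $G - v$ admits a proper $k$-coloring, and its failure to extend to $G$ localizes how $v$ attaches to the blocks of $G[V_L]$. The principal obstacle will be the detailed case analysis needed to rule out the alternative configurations allowed a priori by Gallai's theorem, namely $K_r$-blocks with $r \leq k - 2$, odd-cycle blocks (possible only at $k = 3$), spurious cut vertices of $G[V_L]$, edges inside $V_H$, and $|V_H| \geq 3$. I expect to eliminate each of these by Kempe-type color swaps that transport a proper $k$-coloring of $G - v$ across the block structure into a proper $k$-coloring of $G$, contradicting $\chi(G) = k + 1$. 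Once the block decomposition is pinned down, a final edge count forces the prescribed $V_L$--$V_H$ adjacency pattern, and $G$ matches the definition of $\D_k$ clause-by-clause.
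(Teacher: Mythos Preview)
The paper does not give its own proof of this theorem: it is stated with a citation to Dirac and used as background. There is thus nothing in the paper to compare your proposal against directly. What the paper \emph{does} prove is the generalization Theorem~\ref{theo:sharp_DP_Dirac} for DP-critical graphs, which specializes to the ``only if'' direction here (a $(k+1)$-critical graph $G\neq K_{k+1}$ is $\Cov{H}$-critical for the standard $k$-fold cover and, being critical, contains no $K_{k+1}$; so $G\notin\D_k$ forces $2m>kn+k-2$). That argument is methodologically unrelated to your outline: it is a minimal-counterexample argument in the DP-cover setting, relying on the GDP-forest structure of the low-degree subgraph, the device of ``enhanced'' vertices (Lemma~\ref{lemma:spoiling}), and a reduction step (Lemma~\ref{lemma:T}) that exploits the extra freedom of DP-covers in a way list or ordinary colorings do not allow; Kempe chains never appear. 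Your approach buys elementarity and stays within ordinary coloring; the paper's buys the much stronger DP and list statements at the cost of heavier machinery.

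Your plan itself is the reasonable classical route, and the ``if'' direction is correctly computed. Two comments on the ``only if'' direction. First, the parenthetical that odd-cycle blocks are ``possible only at $k=3$'' is not an a~priori fact: Gallai's theorem permits odd-cycle blocks in $G[V_L]$ for every $k$, and ruling out cycles of length at least $5$ is part of the case analysis you still have to perform (each non-cut vertex of such a block has exactly $k-2$ neighbors in $V_H$, forcing $|V_H|=k-2$ and every vertex of $V_H$ to have degree $k+1$; one must then argue further). Second, the substantive work---eliminating small clique blocks, long-cycle blocks, extra components, misplaced cut vertices, and $|V_H|\geq 3$---is flagged but not actually carried out; your Kempe-swap strategy is plausible, but until those cases are dispatched the proposal is an outline rather than a proof.
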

	
	As $n$ goes to infinity, the gap between Dirac's lower bound and the sharp bound increases. In fact, Gallai~\cite{Gallai} observed that the asymptotic density of large $(k+1)$-critical graphs distinct from $K_{k+1}$ is strictly greater than $k/2$. However, Gallai's bound is stronger than~\eqref{eq:Dirac} only for $n$ at least quadratic in $k$.
	
	\subsection{List coloring}
	
	\emph{List coloring} was introduced independently by Vizing~\cite{Vizing} and Erd\H os, Rubin, and Taylor~\cite{ERT}. A \emph{list assignment} for a graph $G$ is a function $L \colon V(G) \to \powerset{Y}$, where $Y$ is a set, whose elements, as in the case of ordinary colorings, are referred to as \emph{colors}. For each $u \in V(G)$, the set $L(u)$ is called the \emph{list} of $u$ and its elements are said to be \emph{available} for~$u$. If $|L(u)| = k$ for all $u \in V(G)$, then~$L$ is called a \emph{$k$-list assignment}. A proper coloring $f \colon V(G) \to Y$ is called an \emph{$L$-coloring} if  $f(u) \in L(u)$ for each $u \in V(G)$. A graph~$G$ is said to be \emph{$L$-colorable} if it has an $L$-coloring. The \emph{list-chromatic number} $\chi_\ell(G)$ of $G$  is the smallest $k \in \N$ such that $G$ is $L$-colorable for every $k$-list assignment $L$ for $G$. If $k \in \N$ and $L(u) = [k]$ for all $u \in V(G)$, then $G$ is $L$-colorable if and only if it is $k$-colorable; in this sense, list coloring generalizes
	ordinary coloring. In particular, $\chi_\ell(G) \geq \chi(G)$ for all graphs $G$.
	
	A list assignment $L$ for a graph $G$ is called a \emph{degree list assignment} if $|L(u)| \geq \deg_G(u)$ for all $u \in V(G)$. A fundamental result of Borodin~\cite{Borodin} and Erd\H os--Rubin--Taylor~\cite{ERT}, which can be seen as a generalization of Brooks's theorem to list colorings, provides a complete characterization of all graphs $G$ that are not $L$-colorable with respect to some degree list assignment $L$.
	
	\begin{defn}
		A \emph{Gallai tree} is a connected graph in which every block is either a clique or an odd cycle. A~\emph{Gallai forest} is a graph in which every connected component is a Gallai tree.
	\end{defn}
	
	\begin{theo}[{Borodin~\cite{Borodin}; Erd\H os--Rubin--Taylor~\cite[Theorem, p.~142]{ERT}}]\label{theo:list_Brooks}
		Let $G$ be a connected graph and let $L$ be a degree list assignment for $G$. If $G$ is not $L$-colorable, then $G$ is a Gallai tree; furthermore, $|L(u)| = \deg_G(u)$ for all $u \in V(G)$, and if $u$, $v \in V(G)$ are two adjacent non-cut vertices, then $L(u) = L(v)$.
	\end{theo}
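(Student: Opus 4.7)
My plan is to argue by induction on $n \defeq |V(G)|$; the base case $n = 1$ is immediate, since a single-vertex graph with empty list is itself a (trivial) Gallai tree. For the inductive step I would first split on whether $G$ has a cut vertex.

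If $w$ is a cut vertex of $G$, write $G = G_1 \cup G_2$ with $V(G_1) \cap V(G_2) = \set{w}$ and each $G_i$ connected. Let $S_i \subseteq L(w)$ be the set of colors that extend to some proper $L|_{V(G_i)}$-coloring of $G_i$; because $G$ is not $L$-colorable, $S_1 \cap S_2 = \0$. Choose $T_i \subseteq L(w) \setminus S_i$ with $|T_i| = \deg_{G_i}(w)$ (which fits in $L(w)$ because $|L(w)| \geq \deg_G(w) = \deg_{G_1}(w) + \deg_{G_2}(w)$) and let $L'_i$ agree with $L$ off $w$ and equal $L(w) \setminus T_i$ at $w$; then $L'_i$ is a degree list assignment on $G_i$ admitting no proper coloring. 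Applying induction to each $(G_i, L'_i)$ and gluing the two resulting Gallai trees at $w$ yields the conclusion for $G$, using that $w$ is itself a cut vertex of $G$ and therefore exempt from the equal-list condition.

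The substantive part is the $2$-connected case, in which the key claim is: \emph{if $G$ is $2$-connected and is neither a complete graph nor an odd cycle, then $G$ is $L$-colorable.} Granting this, the only surviving possibility is that $G$ consists of a single block isomorphic to $K_n$ or to an odd cycle. In each of these two cases a short ad-hoc argument shows that non-colorability forces all lists to coincide: for $K_n$, Hall's marriage theorem on the vertex--color bipartite graph produces a proper $L$-coloring unless $|L(u)| = n-1$ for every $u$ and these lists all equal one common $(n-1)$-element set; for an odd cycle a greedy traversal around the cycle completes unless all the (necessarily $2$-element) lists are equal.

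The main obstacle is the structural claim above. My plan is to locate non-adjacent vertices $x, y$ sharing a common neighbor $v$ such that $G - \set{x, y}$ is connected, a configuration which exists whenever $G$ is $2$-connected and not complete. Assuming $L(x) \cap L(y) \neq \0$, fix a common color $c$, set the coloring to equal $c$ at both $x$ and $y$, and extend greedily along the reverse of a breadth-first order of the remainder rooted at $v$: each uncolored vertex $u$ then has at least one neighbor later in the order, so at most $\deg_G(u) - 1 < |L(u)|$ colors are forbidden and a valid choice is available. The delicate subcase is $L(x) \cap L(y) = \0$, which I would handle either by rechoosing $x$ or $y$ among the other non-neighbors within $N(v)$ to create a common color, or by exploiting the bound $|L(x) \cup L(y)| \geq 2\deg_G(x)$ together with the local structure near $v$; cleanly eliminating this subcase is the trickiest step of the whole argument.
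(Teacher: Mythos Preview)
The paper does not prove this theorem; it is stated as a classical result of Borodin and Erd\H os--Rubin--Taylor and cited without proof. So there is no ``paper's own proof'' to compare against, and I will comment on the soundness of your sketch on its own.

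Your cut-vertex reduction is broken as written. With $T_i \subseteq L(w) \setminus S_i$ and $L'_i(w) = L(w) \setminus T_i$, you get $S_i \subseteq L'_i(w)$, so $G_i$ \emph{is} $L'_i$-colorable --- the opposite of what you claim. Presumably you meant $L'_i(w) = T_i$; but then you need $|L(w)\setminus S_i| \geq \deg_{G_i}(w)$, and your parenthetical (which only checks $|T_i| \leq |L(w)|$) does not supply this. In fact the inequality runs the other way: since (by induction, using $|L(w)| > \deg_{G_i}(w)$) the graph $G_i$ is $L|_{V(G_i)}$-colorable, fixing any $L$-coloring of $G_i - w$ blocks at most $\deg_{G_i}(w)$ colors at $w$, giving $|L(w)\setminus S_i| \leq \deg_{G_i}(w)$. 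There is no reason for equality, and without it you cannot force $G_i$ to be non-colorable under a degree list at $w$. More fundamentally, you need the Gallai-tree structure on \emph{both} sides of the cut to glue, and this kind of splitting does not guarantee that. The standard route avoids splitting at a cut vertex and instead works block by block: one proves that if some block of $G$ is neither a clique nor an odd cycle then $G$ is $L$-colorable, by coloring that block first and extending greedily outward (the attaching cut vertex has slack coming from its external edges).

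In the $2$-connected case your plan is the Lov\'asz-style Brooks argument, and it is correct when $L(x) \cap L(y) \neq \0$. But the disjoint-list subcase is a genuine gap, not merely a delicate detail: neither ``rechoose $x,y$ among non-neighbors in $N(v)$'' nor the crude bound $|L(x)\cup L(y)| \geq 2\deg_G(x)$ suffices in general, and this is precisely where the list version diverges from ordinary Brooks. The standard proofs close this with a separate structural lemma (ERT's induced even-cycle/theta argument, or an orientation/kernel method). Your analyses of $K_n$ via Hall's theorem and of odd cycles are correct.
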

	
	Theorem~\ref{theo:list_Brooks} provides some useful information about the structure of critical graphs:
	
	\begin{corl}\label{corl:D}
		Let $k \geq 3$ and let $G$ be a $(k+1)$-critical graph. Set
		$$
		D \defeq \set{u \in V(G) \,:\, \deg_G(u) = k}.
		$$
		Then $G[D]$ is a Gallai forest.
	\end{corl}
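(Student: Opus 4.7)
The plan is to apply Theorem~\ref{theo:list_Brooks} separately to each connected component $H$ of $G[D]$. For each such $H$ we construct a degree list assignment $L$ witnessing that $H$ is not $L$-colorable, and then conclude from Theorem~\ref{theo:list_Brooks} that $H$ is a Gallai tree.

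Fix a component $H$ of $G[D]$. Since $V(H) \subseteq D$ and every neighbor of a vertex $u \in V(H)$ lying in $D$ must belong to the same component of $G[D]$ as $u$, we have $N_G(u) \cap D = N_G(u) \cap V(H)$ and therefore $|N_G(u) \setminus V(H)| = k - \deg_H(u)$ for every $u \in V(H)$. There are two cases depending on whether $V(H)$ exhausts $V(G)$. If $V(H) = V(G)$, then $G$ is $k$-regular and connected (critical graphs are connected); take $L(u) \defeq [k]$ for all $u$. Since $\chi(G) = k+1 > k$, $G$ is not $L$-colorable, so Theorem~\ref{theo:list_Brooks} implies $G = H$ is a Gallai tree.

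If $V(H) \neq V(G)$, pick any $u^* \in V(H)$ and, using criticality of $G$, choose a proper $k$-coloring $c$ of $G - u^*$. For each $u \in V(H)$ define
$$
L(u) \defeq [k] \setminus \set{c(x) \,:\, x \in N_G(u) \setminus V(H)}.
$$
Then $|L(u)| \geq k - (k - \deg_H(u)) = \deg_H(u)$, so $L$ is a degree list assignment for $H$. If $H$ admitted an $L$-coloring $f$, then the combined map agreeing with $f$ on $V(H)$ and with $c$ on $V(G) \setminus V(H)$ would be a proper $k$-coloring of $G$ (edges inside $V(H)$ are handled by $f$, edges outside by $c$, and edges across by the definition of $L$), contradicting $\chi(G) = k+1$. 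Therefore $H$ is not $L$-colorable, and Theorem~\ref{theo:list_Brooks} again yields that $H$ is a Gallai tree.

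Since this holds for every component of $G[D]$, the subgraph $G[D]$ is a Gallai forest. No step is truly difficult; the only point requiring a small verification is the degree bound $|L(u)| \geq \deg_H(u)$, which hinges on the observation that a vertex of $H$ cannot have a neighbor in $D$ outside $H$ (otherwise $H$ would not be a component of $G[D]$). The main conceptual content of the proof is simply the reduction of criticality to non-$L$-colorability with respect to a cleverly chosen list assignment on $H$.
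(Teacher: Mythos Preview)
Your proof is correct and is precisely the standard derivation the paper has in mind: the paper does not spell out a proof but simply presents the statement as an immediate consequence of Theorem~\ref{theo:list_Brooks}, and your argument---color $G-u^\ast$ for some $u^\ast\in V(H)$, turn the resulting partial coloring into a degree list assignment on $H$, and invoke Theorem~\ref{theo:list_Brooks}---is exactly that derivation. One cosmetic remark: the case distinction is unnecessary, since your Case~2 argument goes through verbatim when $V(H)=V(G)$ (then $N_G(u)\setminus V(H)=\emptyset$ and $L(u)=[k]$ for every $u$).
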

	
	Corollary~\ref{corl:D} was originally proved by Gallai~\cite{Gallai} using a different method. It is crucial for the proof of Gallai's theorem on the asymptotic average degree of $(k+1)$-critical graphs.
	
	The definition of critical graphs can be naturally extended to list colorings. A~graph $G$ is said to be \emph{$L$-critical}, where $L$ is a list assignment for $G$, if $G$ is not $L$-colorable but for any $u \in V(G)$, the graph $G-u$ is $L\vert_{V(G - u)}$-colorable. Note that if we set $L(u) \defeq [k]$ for all $u \in V(G)$, then $G$ being $L$-critical is equivalent to it being $(k+1)$-critical. Repeating the argument used to prove Corollary~\ref{corl:D}, we obtain the following more general statement:
	
	\begin{corl}[{Kostochka--Stiebitz--Wirth~\cite[Theorem~5]{KSW}}]\label{corl:D1}
		Let $k \geq 3$ and let $G$ be a graph. Suppose that $L$ is a $k$-list assignment for $G$ such that $G$ is $L$-critical. Set
		$$
		D \defeq \set{u \in V(G) \,:\, \deg_G(u) = k}.
		$$
		Then $G[D]$ is a Gallai forest.
	\end{corl}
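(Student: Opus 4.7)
The plan is to imitate the proof of Corollary~\ref{corl:D} by reducing, component by component, to Theorem~\ref{theo:list_Brooks}. Fix a connected component $C$ of $G[D]$. Since $G$ is $L$-critical, for any $u \in V(C)$ the graph $G - u$ admits an $L\vert_{V(G-u)}$-coloring; restricting such a coloring to the vertex set $V(G) \setminus V(C)$ yields a proper $L$-coloring $f$ of $G - V(C)$ (if $V(C) = V(G)$ just take $f$ to be the empty map).

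Using $f$, define a list assignment $L'$ on $V(C)$ by
\[
L'(v) \defeq L(v) \setminus \{f(w) : w \in N_G(v) \setminus V(C)\}.
\]
The essential observation is the degree count
\[
|L'(v)| \,\geq\, k - |N_G(v) \setminus V(C)| \,=\, k - (\deg_G(v) - \deg_C(v)) \,=\, \deg_C(v),
\]
which uses both $|L(v)| = k$ and $\deg_G(v) = k$; the latter is exactly the hypothesis $v \in D$. Hence $L'$ is a degree list assignment for $C$. This is really the only place where the structural hypothesis is used, and everything else is bookkeeping.

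Next I would show that $C$ has no $L'$-coloring. If $g$ were such a coloring, then $f \cup g$ would be a proper $L$-coloring of all of $G$: it respects the lists by construction, and the definition of $L'$ guarantees that $g(v) \neq f(w)$ whenever $v \in V(C)$ has a neighbor $w \in V(G) \setminus V(C)$. This contradicts $L$-criticality of $G$. Applying Theorem~\ref{theo:list_Brooks} to the connected graph $C$ with degree list assignment $L'$ therefore forces $C$ to be a Gallai tree, and since this holds for every component of $G[D]$, the graph $G[D]$ is a Gallai forest.
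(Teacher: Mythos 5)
Your proof is correct and is exactly the argument the paper has in mind: the paper does not spell out a proof of Corollary~\ref{corl:D1} but indicates that it follows by ``repeating the argument used to prove Corollary~\ref{corl:D},'' namely the standard step of fixing a component $C$ of $G[D]$, coloring $G - V(C)$ via $L$-criticality, passing to the reduced list assignment $L'$ on $C$, verifying via $\deg_G(v) = k = |L(v)|$ that $L'$ is a degree list assignment under which $C$ is uncolorable, and invoking Theorem~\ref{theo:list_Brooks}. The degree count $|L'(v)| \ge \deg_C(v)$ and the extension argument showing $C$ is not $L'$-colorable are both handled correctly.
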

	
	Corollary~\ref{corl:D1} can be used to prove a version of Gallai's theorem for list-critical graphs, i.e., to show that the average degree of a graph $G$ distinct from $K_{k+1}$ that is $L$-critical for some $k$-list assignment $L$ has average degree strictly greater than~$k/2$. On the other hand, list-critical graphs distinct from $K_{k+1}$ do not, in general, admit a nontrivial lower bound on the difference $2m-kn$ that only depends on $k$ (analogous to the one given by Dirac's Theorem~\ref{theo:Dirac} for $(k+1)$-critical graphs). Consider the following example, presented in~\cite[p.~167]{KostochkaStiebitz}. Fix $k \in \N$ and let~$G$ be the graph with vertex set $\set{a_0, \ldots, a_k, b_0, \ldots, b_k}$ of size $2(k+1)$ and edge set $\set{a_i a_j, b_i b_j \,:\, i \neq j} \cup \set{a_0 b_0}$. For each $i \in [k]$, let $L(a_i) = L(b_i) \defeq [k]$, and let $L(a_0) = L(b_0) \defeq \set{0} \cup [k-1]$. Then~$G$ is $L$-critical; however, $2|E(G)|
	- k|V(G)| = 2$.
	
	Nonetheless, Theorem~\ref{theo:Dirac} can  be extended to the list coloring framework if we restrict our attention to graphs that do not contain $K_{k+1}$ as a \emph{subgraph}:
	
	\begin{theo}[{Kostochka--Stiebitz~\cite[Theorem~2]{KostochkaStiebitz}}]\label{theo:list_Dirac}
		Let $k \geq 3$. Let $G$ be a graph and let $L$ be a $k$-list assignment for~$G$ such that $G$ is $L$-critical. Suppose that $G$ does not contain a clique of size $k+1$. Set $n \defeq |V(G)|$ and $m \defeq |E(G)|$. Then
		$$
		2m \geq kn + k-2.
		$$
	\end{theo}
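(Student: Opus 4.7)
Suppose, for a contradiction, that $2m \leq kn + k - 3$. A greedy extension of an $L$-coloring of $G - v$ (which exists by $L$-criticality) to any $v \in V(G)$ with $\deg_G(v) < k$ would contradict $L$-criticality, so $\delta(G) \geq k$. Setting $D \defeq \set{u \in V(G) \,:\, \deg_G(u) = k}$ and $H \defeq V(G) \setminus D$, every $v \in H$ has $\deg_G(v) \geq k + 1$ and
$$
2m - kn \;=\; \sum_{v \in H}(\deg_G(v) - k) \;\geq\; |H|,
$$
so $|H| \leq k - 3$.

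By Corollary~\ref{corl:D1}, $G[D]$ is a Gallai forest, and since $G \not\supseteq K_{k+1}$, every clique block of $G[D]$ has at most $k$ vertices. For each component $T$ of $G[D]$, the quantity $f(T) \defeq k|V(T)| - 2|E(T)|$ counts the edges of $G$ from $V(T)$ to $H$. A short induction on the block decomposition shows $f(T) \geq k$: the single-block values $f(K_t) = t(k-t+1)$ (for $1 \leq t \leq k$) and $f(C_{2\ell+1}) = (k-2)(2\ell+1)$ (for $k \geq 3$) are at least $k$, and attaching a new block $B$ at a cut vertex changes $f$ by $k(|V(B)| - 1) - 2|E(B)| \geq 0$. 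Summing over the components $T_1, \dots, T_r$ of $G[D]$ and comparing with
$$
|E_G(D,H)| + 2|E(G[H])| \;=\; \sum_{v \in H} \deg_G(v) \;\leq\; k|H| + (k-3)
$$
yields $kr \leq k|H| + (k-3)$, so $r \leq |H| \leq k - 3$.

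For $k = 3$ this already finishes the proof: we obtain $|H| \leq 0$, but Theorem~\ref{theo:list_Brooks} applied to $G$ rules out $G$ being a Gallai tree with every vertex of degree $k \geq 3$ and no $K_{k+1}$ block (a leaf block of such a tree must contain a non-cut vertex, whose $G$-degree would be at most $k - 1$). For $k = 4$, the parity relation $f(T) \equiv k|V(T)| \pmod{2}$ combined with the block analysis forces each $T_i \cong K_k$; the $k$ edges leaving $T_i$ then all land on the unique vertex $v^\ast$ of $H$, producing a $K_{k+1}$, a contradiction. The main obstacle will be the range $k \geq 5$, where $|H| \geq 2$ is no longer incompatible with $|H| \leq k - 3$. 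Here I would use the structural conclusion of Theorem~\ref{theo:list_Brooks} --- adjacent non-cut vertices within each extremal component $T_i \cong K_k$ share their lists --- to perform a Kempe-chain style modification of an $L$-coloring of $G - v^\ast$ (for a carefully chosen $v^\ast \in H$) that extends to all of $G$, contradicting $L$-criticality. The no-$K_{k+1}$ hypothesis supplies the slack vertex in each $K_k$-component that makes the exchange possible, and carrying out the swap uniformly for all $k \geq 5$ is the delicate step of the proof.
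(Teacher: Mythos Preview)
The paper does not prove Theorem~\ref{theo:list_Dirac} on its own; it is quoted as a known result of Kostochka and Stiebitz. What the paper \emph{does} prove is the stronger DP-coloring statement, Theorem~\ref{theo:sharp_DP_Dirac}, which immediately implies Theorem~\ref{theo:list_Dirac} (indeed, even the sharp form in Corollary~\ref{corl:sharp_list_Dirac}). So the only ``paper's proof'' to compare against is the entire machinery of Sections~2 and~3, specialized to list covers.

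Your write-up is explicitly labeled a plan, and the honest self-assessment in the last paragraph is accurate: it is not a proof. The counting part is fine. With $2m \le kn + k - 3$ you correctly get $|H| \le k-3$, and the block induction giving $f(T) \ge k$ for every component of the Gallai forest $G[D]$ is essentially the paper's Proposition~\ref{prop:kGDP}. The inequality $kr \le k|H| + (k-3)$, hence $r \le |H|$, is also correct, and it disposes of $k = 3$ immediately (then $D = V(G)$ and $r \ge 1 > 0 = |H|$, or one can argue as you do via Theorem~\ref{theo:list_Brooks}). For $k = 4$ your parity remark actually gives a direct contradiction rather than ``forcing $T_i \cong K_k$'': with $|H| = 1$ and $r = 1$ one gets $f(T_1) = \deg_G(v^\ast) = 5$, which is odd, while $f(T_1) \equiv 4|V(T_1)| \equiv 0 \pmod 2$. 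So the $k=4$ paragraph is right in spirit but mis-stated.

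The real gap is $k \ge 5$. There you have not established that every component of $G[D]$ is a copy of $K_k$ (only that the extremal ones are, via the equality case of the block bound), and the ``Kempe-chain style modification of an $L$-coloring of $G - v^\ast$'' is a hope, not an argument. You have not specified which $v^\ast \in H$ to delete, what invariant the exchange preserves, how the ``slack vertex'' in a $K_k$-component interacts with the lists on $H$, or how to handle non-extremal components and the interaction between several components simultaneously. This is precisely the hard part. In the paper's (DP-)proof, the corresponding difficulty is what drives all of Section~3: one needs Lemma~\ref{lemma:many_neighbors_outside}, the analysis of $S_B$ and $T_B$, the minimality/induction step in Lemma~\ref{lemma:T}, and Lemmas~\ref{lemma:|T|_is_2}--\ref{lemma:exists_sparse} to finally reach a contradiction. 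A one-line Kempe swap does not replace that; if it did, Dirac's and Kostochka--Stiebitz's theorems would be much shorter than they are. As it stands, your proposal proves the theorem only for $k \in \{3,4\}$.
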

	
	Kostochka and Stiebitz~\cite[Section~4]{KostochkaStiebitz} %posed a problem of determining 
asked whether the conclusion of Theorem~\ref{theo:sharp_Dirac} also holds for list critical graphs with no $K_{k+1}$ as a subgraph. We answer this question in the affirmative; see~Corollary~\ref{corl:sharp_list_Dirac}.
	
	\subsection{DP-colorings and the results of this paper}
	
	In this paper we focus on a generalization of list coloring that was recently introduced by Dvo\v r\' ak and Postle~\cite{DP}; they called it \emph{correspondence coloring}, and we call it \emph{DP-coloring} for short. Dvo\v r\' ak and Postle invented DP-coloring in order to prove that every planar graph without cycles of lengths $4$ to $8$ is $3$-list-colorable~\cite[Theorem~1]{DP}, thus answering a long-standing question of Borodin~\cite[Problem~8.1]{Bor13}. In the setting of DP-coloring, not only does each vertex get its own list of available colors, but also the identifications between the colors in the lists can vary from edge to edge.
	
	\begin{defn}\label{defn:cover}
		Let $G$ be a graph. A \emph{cover} of $G$ is a pair $\Cov{H} = (L, H)$, consisting of a graph $H$ and a function $L \colon V(G) \to \powerset{V(H)}$, satisfying the following requirements:
		\begin{enumerate}[labelindent=\parindent,leftmargin=*,label=(C\arabic*)]
			\item the sets $\set{L(u) \,:\,u \in V(G)}$, form a partition of $V(H)$;
			\item for every $u \in V(G)$, the graph $H[L(u)]$ is complete;
			\item if $E_H(L(u), L(v)) \neq \0$, then either $u = v$ or $uv \in E(G)$;
			\item \label{item:matching} if $uv \in E(G)$, then $E_H(L(u), L(v))$ is a matching.
		\end{enumerate}
		A cover $\Cov{H} = (L, H)$ of $G$ is \emph{$k$-fold} if $|L(u)| = k$ for all $u \in V(G)$.
	\end{defn}
	
	\begin{remk}
		The matching $E_H(L(u), L(v))$ in Definition~\ref{defn:cover}\ref{item:matching} does not have to be perfect and, in particular, is allowed to be empty.
	\end{remk}
	
	%The second cover of $C_4$ shown in Fig.~\ref{fig:cycle} proves that $\chi_{DP}(C_4) \geq 3$.
	
	%Fig.~\ref{fig:cycle} shows an example of two distinct $2$-fold covers of $G \cong C_4$.
	
	\begin{defn}
		Let $G$ be a graph and let $\Cov{H} = (L, H)$ be a cover of $G$. An \emph{$\Cov{H}$-coloring} of $G$ is an independent set $I \in \I(H)$ of size $|V(G)|$. We say that $G$ is \emph{$\Cov{H}$-colorable} if it has an $\Cov{H}$-coloring.
	\end{defn}
	
	\begin{remk}\label{remk:single}
		By definition, if $\Cov{H} = (L, H)$ is a cover of $G$, then $\set{L(u)\,:\, u \in V(G)}$ is a partition of~$H$ into $|V(G)|$ cliques. Therefore, $I \in \I(H)$ is an $\Cov{H}$-coloring of $G$ if and only if $|I \cap L(u)| = 1$ for all $u \in V(G)$.
	\end{remk}
	
	\begin{defn}
		Let $G$ be a graph. The \emph{DP-chromatic number} $\chi_{DP}(G)$ of $G$ is the smallest $k \in \N$ such that $G$ is $\Cov{H}$-colorable for every $k$-fold cover $\Cov{H}$ of $G$.
	\end{defn}
	
	\begin{exmp}\label{exmp:cycles}
		Figure~\ref{fig:cycle} shows two distinct $2$-fold covers of the $4$-cycle $C_4$. Note that $C_4$ is $\Cov{H}_1$-colorable but not $\Cov{H}_2$-colorable. In particular, $\chi_{DP}(C_4) \geq 3$. On the other hand, it can be easily seen that $\chi_{DP}(G) \leq \Delta(G) + 1$ for any graph $G$, and so we have $\chi_{DP}(C_4) = 3$. A~similar argument demonstrates that $\chi_{DP}(C_n) = 3$ for any cycle $C_n$ of length $n \geq 3$.
	\end{exmp}
	
	\begin{figure}[h]
		\centering	
		\begin{tikzpicture}[scale=0.7]
		\definecolor{light-gray}{gray}{0.95}
		
		\filldraw[fill=light-gray]
		(6.5,0) circle [x radius=1cm, y radius=5mm, rotate=45]
		(6.5,3) circle [x radius=1cm, y radius=5mm, rotate=-45]
		(9.5,0) circle [x radius=1cm, y radius=5mm, rotate=-45]
		(9.5,3) circle [x radius=1cm, y radius=5mm, rotate=45];
		
		\foreach \x in {(6, -0.5), (6, 3.5), (7, 0.5), (7, 2.5), (9, 0.5), (9, 2.5), (10, -0.5), (10, 3.5)}
		\filldraw \x circle (4pt);
		
		\draw[thick] (6, -0.5) -- (6, 3.5) -- (10, 3.5) -- (10, -0.5) -- cycle;
		
		\draw[thick] (7, 0.5) -- (7, 2.5) -- (9, 2.5) -- (9, 0.5) -- cycle;
		
		\draw[thick] (6, -0.5) -- (7, 0.5) (6, 3.5) -- (7, 2.5) (9, 2.5) -- (10, 3.5) (10, -0.5) -- (9, 0.5);
		
		\node at (8, -1.5) {$\Cov{H}_1$};
		
		\filldraw[fill=light-gray]
		(13+3,0) circle [x radius=1cm, y radius=5mm, rotate=45]
		(13+3,3) circle [x radius=1cm, y radius=5mm, rotate=-45]
		(16+3,0) circle [x radius=1cm, y radius=5mm, rotate=-45]
		(16+3,3) circle [x radius=1cm, y radius=5mm, rotate=45];
		
		\foreach \x in {(12.5+3, -0.5), (12.5+3, 3.5), (13.5+3, 0.5), (13.5+3, 2.5), (15.5+3, 0.5), (15.5+3, 2.5), (16.5+3, -0.5), (16.5+3, 3.5)}
		\filldraw \x circle (4pt);
		
		\draw[thick] (12.5+3, -0.5) -- (12.5+3, 3.5) -- (16.5+3, 3.5) -- (16.5+3, -0.5) -- (13.5+3, 0.5) -- (13.5+3, 2.5) -- (15.5+3, 2.5) -- (15.5+3, 0.5) -- cycle;
		
		\draw[thick] (12.5+3, -0.5) -- (13.5+3, 0.5) (12.5+3, 3.5) -- (13.5+3, 2.5) (15.5+3, 0.5) -- (16.5+3, -0.5) (15.5+3, 2.5) -- (16.5+3, 3.5);
		
		\node at (14.5+3, -1.5) {$\Cov{H}_2$};
		\end{tikzpicture}
		\caption{Two distinct $2$-fold covers of a $4$-cycle.}\label{fig:cycle}
	\end{figure}
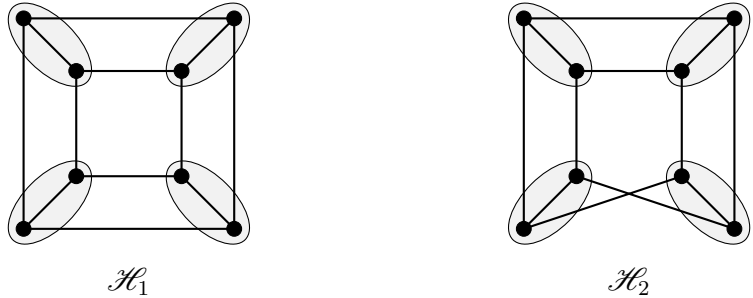
	
	In order to see that DP-colorings indeed generalize list colorings, consider a graph $G$ and a list assignment $L$ for $G$. Let $H$ be the graph with vertex set
	$$
	V(H) \defeq \set{(u, c)\,:\, u \in V(G) \text{ and } c \in L(u)},
	$$
	in which two distinct vertices $(u, c)$ and $(v, d)$ are adjacent if and only if
	\begin{itemize}
		\item[--] either $u = v$,
		\item[--] or else, $uv \in E(G)$ and $c = d$.
	\end{itemize}
	For each $u \in V(G)$, set
	$$
	L'(u) \defeq \set{(u, c) \,:\, c \in L(u)}.
	$$
	Then $\Cov{H} \defeq (L', H)$ is a cover of $G$, and there is a natural bijective correspondence between the $L$-colorings and the $\Cov{H}$-colorings of $G$. Indeed, if $f$ is an $L$-coloring of $G$, then the set
	$$
	I_f \defeq \set{(u, f(u)) \,:\, u \in V(G)}
	$$
	is an $\Cov{H}$-coloring of $G$. Conversely, given an $\Cov{H}$-coloring $I \in \I(H)$ of $G$, $|I \cap L'(u)| = 1$ for all $u \in V(G)$, so one can define an $L$-coloring $f_I$ by the property $$(u, f_I(u)) \in I \cap L'(u),$$ for all $u \in V(G)$. This shows that list colorings can be identified with a subclass of DP-colorings. In particular, $\chi_{DP}(G) \geq \chi_\ell(G)$ for all graphs $G$.
	
	Some upper bounds on list-chromatic numbers hold for DP-chromatic numbers as well. For instance, it is easy to see that $\chi_{DP}(G) \leq d+1$ for any $d$-degenerate graph $G$. Dvo\v r\'ak and Postle~\cite{DP} pointed out that for any planar graph $G$, $\chi_{DP}(G) \leq 5$ and, moreover, $\chi_{DP}(G) \leq 3$ if $G$ is a planar graph of girth at least $5$ (these statements are extensions of classical results of Thomassen~\cite{Thomassen1, Thomassen2} regarding list colorings). On the other hand, there are also some striking differences between DP- and list colorings. For example, even cycles are $2$-list-colorable, while their DP-chromatic number is $3$ (see Example~\ref{exmp:cycles}). In particular, the orientation theorems of Alon--Tarsi~\cite{AT} and the Bondy--Boppana--Siegel lemma (see~\cite{AT}) do not extend to DP-colorings; see~\cite{BKExamples} for more examples demonstrating the failure of these techniques in the DP-coloring context. Bernshteyn~\cite[Theorem~1.6]{Bernshteyn} showed that the DP-chromatic number of every graph with average degree $d$ is $\Omega(d/\log d)$, i.e., almost linear in $d$ (recall that due to a celebrated result of Alon~\cite{Alon}, the list-chromatic number of such graphs is $\Omega(\log d)$, and this bound is best possible). On the other hand, Johansson's upper bound~\cite{Johansson} on list chromatic numbers of triangle-free graphs also holds for DP-chromatic numbers~\cite[Theorem~1.7]{Bernshteyn}.
	
	A cover $\Cov{H} = (L, H)$ of a graph $G$ is a \emph{degree cover} if $|L(u)| \geq \deg_G(u)$ for all $u \in V(G)$. Bernshteyn, Kostochka, and Pron~\cite{BKP} established the following generalization of Theorem~\ref{theo:list_Brooks}:
	
	\begin{defn}
		A \emph{GDP-tree} is a connected graph in which every block is either a clique or a cycle. A \emph{GDP-forest} is a graph in which every connected component is a GDP-tree.
	\end{defn}
	
	\begin{theo}[{\cite[Theorem~9]{BKP}}]\label{theo:DP_Brooks}
		Let $G$ be a connected graph and let $\Cov{H} = (L, H)$ be a degree cover of $G$. If $G$ is not $\Cov{H}$-colorable, then $G$ is a GDP-tree; furthermore, $|L(u)| = \deg_G(u)$ for all $u \in V(G)$, and if $u$, $v \in V(G)$ are two adjacent non-cut vertices, then $E_H(L(u), L(v))$ is a perfect matching.
	\end{theo}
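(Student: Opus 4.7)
I would prove this by strong induction on $|V(G)|$, following the Erd\H os--Rubin--Taylor argument for Theorem~\ref{theo:list_Brooks} while carefully tracking the matchings of $\Cov{H}$. Let $G$ be a connected counterexample of minimum order.

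Two easy reductions come first. If some $u_0 \in V(G)$ has $|L(u_0)| > \deg_G(u_0)$, order $V(G)$ as a reverse BFS from $u_0$ and color greedily: every non-root $v$ has its BFS parent still uncolored when it is reached, so at most $\deg_G(v) - 1 < |L(v)|$ vertices of $L(v)$ are forbidden by matching partners to already-chosen vertices, and the surplus at $u_0$ absorbs the final step. Hence $|L(u)| = \deg_G(u)$ for every $u$. Next, if $v$ is a cut vertex of $G$ separating it into $G_1$ and $G_2$ meeting only at $v$, then $|L(v)| > \deg_{G_i}(v)$ in each $G_i$, and the same BFS argument rooted at $v$ extends any prescribed $c \in L(v)$ to an $\Cov{H}\vert_{G_i}$-coloring of $G_i$; the two extensions glue at $v$ to give an $\Cov{H}$-coloring of $G$, a contradiction. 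So $G$ is $2$-connected.

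Now assume $G$ is $2$-connected. A classical graph-theoretic lemma states that if $G$ is neither a complete graph nor a cycle, there exist vertices $u_1, w, u_2$ with $u_1 w, u_2 w \in E(G)$, $u_1 u_2 \notin E(G)$, and $G - \{u_1, u_2\}$ connected. Since $u_1 u_2 \notin E(G)$, the set $L(u_1) \cup L(u_2)$ is independent in $H$, so precoloring $u_1, u_2$ by any $c_1 \in L(u_1)$, $c_2 \in L(u_2)$ is legal; the two precolorings forbid at most two vertices of $L(w)$, while $\deg_{G - \{u_1, u_2\}}(w) = \deg_G(w) - 2$. Choosing $c_1, c_2$ so that their matching partners in $L(w)$ coincide yields strict slack at $w$, which then propagates via reverse BFS on $G - \{u_1, u_2\}$ to a full $\Cov{H}$-coloring. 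This forces $G$ itself to be a clique or a cycle, matching the GDP-tree conclusion. Finally, the perfect-matching condition on a non-cut edge $uv$ follows by a similar trick: if some $c \in L(u)$ were unmatched in $L(v)$, precoloring $u = c$ leaves $|L(v)| > \deg_{G - u}(v)$ in $G - u$, and the preceding reductions color $G - u$.

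The main obstacle is the collision argument in the $2$-connected step. In the list-coloring proof one simply takes $c_1 = c_2 \in L(u_1) \cap L(u_2)$, which automatically identifies the two forbidden colors at $w$; in the DP-setting the matching partners of $c_1$ and $c_2$ in $L(w)$ are a priori unrelated vertices of the clique $H[L(w)]$, so forcing them to coincide requires a careful double-counting over the $2|L(w)|$ edges of $E_H(L(u_1), L(w)) \cup E_H(L(u_2), L(w))$, or alternatively an ear-decomposition argument along a path where matching compositions can be controlled. A secondary subtlety is that the GDP-tree conclusion allows \emph{all} cycles as blocks, not just odd ones, reflecting the fact that an even cycle equipped with a degree cover whose matchings compose around the cycle to a fixed-point-free permutation is not $\Cov{H}$-colorable.
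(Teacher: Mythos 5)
The paper does not prove Theorem~\ref{theo:DP_Brooks}; it quotes it from~\cite[Theorem~9]{BKP}, so there is no internal proof to compare against, and I evaluate your sketch on its own. Your skeleton is the right one: a greedy reverse-BFS from a vertex with surplus, reduction to the $2$-connected case by gluing at a cut vertex, and then the Lov\'asz-style triple $u_1, w, u_2$ with $u_1w, u_2w \in E(G)$, $u_1u_2 \notin E(G)$, and $G - \set{u_1, u_2}$ connected. This is indeed how the Borodin/Erd\H os--Rubin--Taylor argument is ported to the DP setting.

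The gap you flag in the collision step is real and your proposed fix does not close it. After the first reduction, $|L(u)| = \deg_G(u)$ for every $u$, so $|E_H(L(u_i), L(w))| \leq |L(u_i)| = \deg_G(u_i)$; if both matchings happen to be total injections of $L(u_i)$ into $L(w)$ with disjoint images --- which can occur whenever $\deg_G(u_1) + \deg_G(u_2) \leq \deg_G(w)$ --- then no pair $(c_1, c_2)$ collides in $L(w)$, and a double-count over $E_H(L(u_1),L(w)) \cup E_H(L(u_2),L(w))$ cannot rescue the argument. (Note also that the list-coloring analogue you invoke is not automatic either: $L(u_1) \cap L(u_2)$ can be empty.) The missing intermediate step is to prove, for a $2$-connected minimal counterexample, that \emph{every} edge matching is already perfect. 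This is short: if some $c \in L(u)$ has no $H$-neighbor in $L(v)$ for some neighbor $v$ of $u$, then coloring $u$ by $c$ leaves a degree cover of the connected graph $G-u$ in which $v$ has strict surplus, and reverse BFS rooted at $v$ colors $G-u$, a contradiction. Hence each $E_H(L(u), L(v))$ saturates $L(u)$ and, symmetrically, $L(v)$, so it is a perfect matching and $|L(u)| = |L(v)|$; by connectedness $G$ is regular, each $\sigma_{u_iw}\colon L(u_i) \to L(w)$ induced by the matching is a bijection, and the desired collision $\sigma_{u_1w}(c_1) = \sigma_{u_2w}(c_2)$ always exists. This intermediate lemma is exactly what also delivers the perfect-matching clause of the theorem, so without it the induction does not close.
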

	
	Let $G$ be a graph and let $\Cov{H} = (L,H)$ be a cover of $G$. We say that $G$ is \emph{$\Cov{H}$-critical} if $G$ is not $\Cov{H}$-colorable but for any $u \in V(G)$, there exists $I \in \I(H)$ such that $I \cap L(v) \neq \0$ for all $v \neq u$. Theorem~\ref{theo:DP_Brooks} implies the following:
	
	\begin{corl}[\cite{BKP}]\label{corl:D2}
		Let $k \geq 3$ and let $G$ be a graph. Suppose that $\Cov{H}$ is a $k$-fold cover of $G$ such that $G$ is $\Cov{H}$-critical. Set
		$$
		D \defeq \set{u \in V(G) \,:\, \deg_G(u) = k}.
		$$
		Then $G[D]$ is a GDP-forest.
	\end{corl}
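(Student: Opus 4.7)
The plan is to reduce the corollary to Theorem~\ref{theo:DP_Brooks} (DP-Brooks) by showing that every connected component $T$ of $G[D]$ is a GDP-tree. Fix such a $T$. I will construct a degree cover $\Cov{H}'$ of $T$, inherited from $\Cov{H}$, such that $T$ is not $\Cov{H}'$-colorable; then DP-Brooks directly yields that $T$ is a GDP-tree.

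To build $\Cov{H}'$, pick any $v^* \in V(T)$. By $\Cov{H}$-criticality there exists $I \in \I(H)$ with $|I \cap L(u)| = 1$ for every $u \in V(G) \setminus \set{v^*}$; let $\phi(u)$ denote the unique element of $I \cap L(u)$. Only the values of $\phi$ on $V(G) \setminus V(T)$ will matter, and since $v^* \in V(T)$, they are all defined. For each $u \in V(T)$, set
$$
L'(u) \defeq L(u) \setminus N_H\bigl(\set{\phi(w) : w \in N_G(u) \setminus V(T)}\bigr),
$$
and $H' \defeq H[\bigcup_{u \in V(T)} L'(u)]$. Routine checks show that $\Cov{H}' \defeq (L', H')$ satisfies (C1)--(C4), so it is a cover of $T$. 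The matching axiom~\ref{item:matching} of Definition~\ref{defn:cover} implies that each $w \in N_G(u) \setminus V(T)$ deletes at most one color from $L(u)$; hence $|L'(u)| \geq k - (\deg_G(u) - \deg_T(u)) = \deg_T(u)$, using $\deg_G(u) = k$ for $u \in D$. Thus $\Cov{H}'$ is a degree cover of $T$.

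Finally, if $J \in \I(H')$ were an $\Cov{H}'$-coloring of $T$, then $J \cup \set{\phi(w) : w \in V(G) \setminus V(T)}$ would be an $\Cov{H}$-coloring of $G$: independence within each of the two parts is clear, and (C3) together with the definition of $L'$ rules out any $H$-edge between them. This contradicts $\Cov{H}$-criticality, so $T$ is not $\Cov{H}'$-colorable, and Theorem~\ref{theo:DP_Brooks} forces $T$ to be a GDP-tree. The main substantive step—and the one most prone to bookkeeping errors—is the degree count showing $|L'(u)| \geq \deg_T(u)$, which crucially uses both the matching axiom (C4) and the hypothesis $u \in D$.
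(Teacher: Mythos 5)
Your proof is correct and follows exactly the route the paper intends: the paper introduces Corollary~\ref{corl:D2} with the phrase ``Theorem~\ref{theo:DP_Brooks} implies the following'' and cites \cite{BKP} without supplying details, and your argument is precisely that reduction. You fix a component $T$ of $G[D]$, pick $v^* \in V(T)$, use $\Cov{H}$-criticality to obtain a coloring $I$ of $V(G)\setminus\{v^*\}$, restrict $\Cov{H}$ to a degree cover $\Cov{H}'$ of $T$ by deleting the neighbors in $H$ of the colors used on $N_G(u)\setminus V(T)$, verify via~\ref{item:matching} and $\deg_G(u)=k$ that $|L'(u)| \geq \deg_T(u)$, show that an $\Cov{H}'$-coloring of $T$ would extend $I$ to an $\Cov{H}$-coloring of $G$, and invoke Theorem~\ref{theo:DP_Brooks}. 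All the verifications (that $\Cov{H}'$ satisfies (C1)--(C4), the degree count, and the independence of the merged set) are sound.
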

	
	Corollary~\ref{corl:D2} implies an extension of Gallai's theorem to DP-critical graphs~\cite[Corollary~10]{BKP}.
	
	The main result of this paper is a generalization of Theorem~\ref{theo:list_Dirac} to DP-critical graphs. In fact, we establish a sharp version that also generalizes Theorem~\ref{theo:sharp_Dirac}:
	
	\begin{theo}\label{theo:sharp_DP_Dirac} Let $k\geq 3$.
		Let $G$ be a graph and let $\Cov{H}$ be a $k$-fold cover of $G$ such that $G$ is $\Cov{H}$-critical. Suppose that $G$ does not contain a clique of size $k+1$. Set $n \defeq |V(G)|$ and $m \defeq |E(G)|$. If $G \not \in \D_k$, then
		$$
		2m > kn + k-2.
		$$
	\end{theo}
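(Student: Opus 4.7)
The plan is to argue by contradiction, taking a counterexample $(G,\Cov{H})$ minimizing $n \defeq |V(G)|$. So $G$ is $\Cov{H}$-critical for some $k$-fold cover $\Cov{H}$, contains no $K_{k+1}$, lies outside $\D_k$, and satisfies $2m \leq kn + k-2$, where $m \defeq |E(G)|$. A $\Cov{H}$-critical graph is connected, since any component that is already not $\Cov{H}$-colorable remains uncolorable after deleting vertices from another component, violating criticality. Set $D \defeq \{u \in V(G) : \deg_G(u) = k\}$ and $H^* \defeq V(G)\setminus D$. A standard double count gives
\[
2m \;\geq\; k|D| + (k+1)|H^*| \;=\; kn + |H^*|,
\]
so the hypothesis forces $|H^*| \leq k-2$. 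By Corollary~\ref{corl:D2}, $G[D]$ is a GDP-forest; let $T_1,\ldots,T_s$ be its components. Because $T_i$ is a component of $G[D]$, every neighbor of a vertex $v \in V(T_i)$ outside $V(T_i)$ lies in $H^*$.

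The central task is to show that each $T_i$ is a single clique block of order $k-1$ or $k$. To analyze a fixed $T_i$, use criticality to pick an $\Cov{H}$-coloring $I_0$ of $G - V(T_i)$, and define a cover $\Cov{H}'$ of $T_i$ by removing from each $L(v)$ (with $v \in V(T_i)$) the colors matched in $\Cov{H}$ to an $I_0$-chosen color at a neighbor of $v$ in $H^*$. Since $v$ has exactly $k - \deg_{T_i}(v)$ neighbors in $H^*$, the resulting cover satisfies $|L'(v)| \geq \deg_{T_i}(v)$, so $\Cov{H}'$ is a degree cover of $T_i$. Because $I_0$ cannot extend to an $\Cov{H}$-coloring of $G$, the tree $T_i$ is not $\Cov{H}'$-colorable, and Theorem~\ref{theo:DP_Brooks} applies: $T_i$ is a GDP-tree, $|L'(v)| = \deg_{T_i}(v)$ exactly, and $\Cov{H}'$-matchings between adjacent non-cut vertices of $T_i$ are perfect. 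Varying $I_0$ over all available extensions and using the tight budget $|H^*| \leq k-2$, one argues that no $T_i$ can have a cycle block or more than one block, and that each $T_i$ is a clique $K_{k-1}$ or $K_k$.

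Once each $T_i$ has been pinned down to such a clique, a short accounting of the edges between $D$ and $H^*$---using the equality $k|V(T_i)| - 2|E(T_i)| = |E_G(V(T_i), H^*)|$ and the bound $|H^*| \leq k-2$---restricts $s$ and $|H^*|$ to the values appearing in the definition of $\D_k$. A final application of Theorem~\ref{theo:DP_Brooks}, combined with criticality, rules out configurations in which $G$ would be either $\Cov{H}$-colorable or isomorphic to $K_{k+1}$, and the remaining configurations place $G$ directly in $\D_k$, contradicting the choice of counterexample.

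The principal obstacle is the step that eliminates cycle blocks from the $T_i$. In the list-coloring analogue, a cycle inside $G[D]$ would be forced to be odd (Gallai trees only allow odd cycles), and odd cycles can be neutralized by a simple color swap; in the list setting this step is almost automatic. In the DP setting, however, GDP-trees permit cycle blocks of arbitrary parity, and Example~\ref{exmp:cycles} shows that a $2$-fold DP-cover can genuinely obstruct coloring of an even cycle, so the analogous reduction is not available. Ruling cycle blocks out therefore requires a delicate combinatorial argument: one must exploit the freedom to re-choose $I_0$ to produce two incompatible matching patterns along a putative cycle block and then combine them into an $\Cov{H}$-coloring of $G$, contradicting uncolorability. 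This is where the bulk of the technical work concentrates.
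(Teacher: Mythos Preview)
Your setup (minimum counterexample, the set $D$, the bound $|H^\ast|\le k-2$, and the use of Corollary~\ref{corl:D2} together with Theorem~\ref{theo:DP_Brooks} on the components of $G[D]$) matches the paper exactly. But from that point on your proposal is a sketch with an explicit gap, not a proof: you yourself identify ruling out cycle blocks in the $T_i$ as ``the principal obstacle'' and say only that ``one argues'' it, or that ``one must exploit the freedom to re-choose $I_0$'' to get two incompatible matching patterns. No such argument is given, and it is far from routine. The vague idea of varying $I_0$ is present in the paper too, but turning it into an actual contradiction takes essentially all of Sections~2 and~3 (the machinery of \emph{enhanced vertices}, Lemma~\ref{lemma:spoiling}, Corollary~\ref{corl:spoiling}, Lemma~\ref{lemma:no_small_degrees}, the dichotomy between dense and sparse \emph{terminal sets}, and the long structural analysis of $S_B$ and $T_B$). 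Your outline supplies none of this.

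Moreover, the paper's route is not the one you describe. The paper does \emph{not} aim to show that every component of $G[D]$ is a single clique $K_{k-1}$ or $K_k$ and then do edge-accounting. Instead it proves two existence statements that pull in opposite directions: Lemma~\ref{lemma:not_all_sparse} shows some terminal set is dense (a clique on $\ge 4$ vertices), while Lemma~\ref{lemma:exists_sparse} shows some terminal set is sparse (a cycle). The coexistence of these two, combined with the tight structure forced on $S_B$ and $T_B$ (Corollaries~\ref{corl:S}, \ref{corl:D^c}, Lemma~\ref{lemma:|T|_is_2}), gives the contradiction in the final paragraph. You also miss where minimality of the counterexample is actually spent: it is used exactly once, in Lemma~\ref{lemma:T}, where one deletes $B$, adds a new edge $v_1v_2$, builds a genuinely new $k$-fold cover $\Cov{H}^\ast$ of the smaller graph $G^\ast$ (this cover need not arise from a list assignment even if $\Cov{H}$ does), and applies the theorem inductively to $G^\ast$. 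Your proposal never invokes the inductive hypothesis on a smaller graph, so as written it does not use the minimality at all.
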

	
	An immediate corollary of Theorem~\ref{theo:sharp_DP_Dirac} is the following version of Theorem~\ref{theo:sharp_Dirac} for list colorings:
	
	\begin{corl}\label{corl:sharp_list_Dirac}  Let $k\geq 3$.
		Let $G$ be a graph and let $L$ be a $k$-list assignment for~$G$ such that~$G$ is $L$-critical. Suppose that $G$ does not contain a clique of size $k+1$. Set $n \defeq |V(G)|$ and $m \defeq |E(G)|$. If $G \not \in \D_k$, then
		$$
		2m > kn + k-2.
		$$
	\end{corl}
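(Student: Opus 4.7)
The plan is to argue by contradiction. Suppose $G\notin\D_k$ is $\Cov{H}$-critical for some $k$-fold cover $\Cov{H}$, contains no $K_{k+1}$, and satisfies $2m\le kn+k-2$. Because $G$ is $\Cov{H}$-critical, $\delta(G)\ge k$ (any $\Cov{H}$-coloring of $G-v$ extends greedily whenever $\deg_G(v)<k$), so I partition $V(G)=D\sqcup D^+$ with $D:=\{u:\deg_G(u)=k\}$ and $\deg_G(v)\ge k+1$ for every $v\in D^+$. Writing $d:=\sum_{v\in D^+}(\deg_G(v)-k)$, the edge bound rewrites as $d\le k-2$, and since each summand is at least $1$, $|D^+|\le d\le k-2$.

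The first structural step would be to apply Corollary~\ref{corl:D2}: $G[D]$ is a GDP-forest, and every clique block has order at most $k$ because $G$ is $K_{k+1}$-free. I would then examine each leaf block $B$ of a component of $G[D]$ by counting how many neighbors its non-cut vertices send into $D^+$. If $B$ is a clique of size $s$, every non-cut vertex has $k-s+1$ neighbors in $D^+$, and $|D^+|\le k-2$ forces $s\ge 3$; if $B$ is a cycle, every non-cut vertex has $k-2$ neighbors in $D^+$, which forces $|D^+|=k-2$ and every such vertex to be adjacent to \emph{all} of $D^+$. Combining these constraints with the identity $|E_G(D,D^+)|=k|D^+|+d-2|E(G[D^+])|\le(k+1)(k-2)$ and with the block--cut tree of $G[D]$ should reduce $G[D]$ to a short list of configurations, each close to the $D$-part of a $k$-Dirac graph.

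The core of the argument would be a DP-coloring extension that eliminates every remaining non-$\D_k$ candidate. Picking $u_0\in D^+$ and, by $\Cov{H}$-criticality, fixing an independent set $I_0\in\I(H)$ meeting $L(v)$ exactly once for each $v\ne u_0$, my aim would be to modify $I_0$ so that some vertex of $L(u_0)$ is no longer blocked by a matched neighbor, producing an $\Cov{H}$-coloring of $G$ and the desired contradiction. Modifications would happen inside leaf blocks $B$ of $G[D]$ via Theorem~\ref{theo:DP_Brooks} applied to the subcover on $B$: between the non-cut vertices of $B$ the matchings in $\Cov{H}$ are perfect, giving enough flexibility to ``swap'' the colors assigned on $V(B)$ subject to the boundary conditions imposed by the (unique) cut-vertex of $B$ and by the attachments to $D^+$. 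A careful sequence of such block swaps should either free a color at $u_0$ or force the blocks of $G[D]$, together with the matchings in $\Cov{H}$ and the edges to $D^+$, to line up exactly with the partition $V_1\sqcup V_2\sqcup V_3$ of a $k$-Dirac graph, contradicting $G\notin\D_k$.

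The main obstacle I anticipate is implementing the DP analogue of a Kempe swap: unlike ordinary list coloring, DP-covers have no globally meaningful notion of ``color,'' so every reassignment inside a block must be tracked through the matching edges of $\Cov{H}$ and then verified to remain independent across cut-vertices and across the matchings to $D^+$. A secondary hurdle is the small-$k$ regime ($k=3,4$), in which $|D^+|\le k-2$ equals $1$ or $2$ and the leaf-block arithmetic becomes so tight that the Dirac structure is almost forced from the outset; these cases likely warrant a separate direct verification.
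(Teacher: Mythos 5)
Your proposal misses the point that this corollary is, in the paper, a one-line consequence of Theorem~\ref{theo:sharp_DP_Dirac}. The introduction shows that every list assignment $L$ gives rise to a $k$-fold cover $\Cov{H}=(L',H)$ with the same colorings, so an $L$-critical graph is automatically $\Cov{H}$-critical; the bound $2m > kn + k - 2$ then follows immediately from the theorem, with nothing further to prove. You evidently perform this reduction implicitly (you switch from ``$L$-critical'' in the statement to ``$\Cov{H}$-critical'' in your first line), but then, instead of invoking the theorem, you attempt to re-derive it from scratch.

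As a re-derivation of Theorem~\ref{theo:sharp_DP_Dirac}, your sketch is both incomplete and strategically different from what the paper does. Your ``core argument'' is a Kempe-style sequence of swaps inside leaf blocks meant either to free a color at a chosen $u_0 \in D^+$ or to force the $\D_k$ structure; you state yourself that you ``anticipate'' obstacles in making this precise and that the small-$k$ cases ``likely warrant a separate direct verification.'' That is a plan, not a proof: the decisive steps are asserted, not carried out, and nothing pins down why the only surviving configurations are $k$-Dirac graphs. The paper's actual argument runs through different machinery entirely: a notion of \emph{enhanced} vertices (Definition~\ref{defn:enhanced}, Lemmas~\ref{lemma:spoilt} and~\ref{lemma:spoiling}), a dichotomy between dense and sparse \emph{terminal sets} with Lemma~\ref{lemma:not_all_sparse} guaranteeing a dense one, a detailed analysis of $S_B$ and $T_B$, and---crucially---an induction on $|V(G)|$ in Lemma~\ref{lemma:T}, where one passes from $G$ to a smaller graph $G^\ast$ equipped with a cover $\Cov{H}^\ast$ that is generally \emph{not} induced by a list assignment even if $\Cov{H}$ is. Your outline does not mention minimality of the counterexample at all, and without it the argument has no way to close; moreover, that step is precisely where working in the DP setting (rather than only with lists) is needed. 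In short: if your goal is the corollary as stated, cite Theorem~\ref{theo:sharp_DP_Dirac} and you are done; if your goal is to re-prove that theorem, the sketch is missing the key lemmas and the inductive mechanism, and would need to be substantially rebuilt along the lines of Sections 2--3.
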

	
	Our proof of Theorem~\ref{theo:sharp_DP_Dirac} is, essentially, inductive. As often is the case, having a stronger inductive assumption (due to considering DP-critical and not just list-critical graphs) allows for more flexibility in the proof. In particular, we do not know if our argument can be adapted to give a ``DP-free'' proof of Corollary~\ref{corl:sharp_list_Dirac}.
	
	\section{Proof of Theorem~\ref{theo:sharp_DP_Dirac}: first observations}
	
	\subsection{Set-up and notation}
	
	From now on, we fix a counterexample to Theorem~\ref{theo:sharp_DP_Dirac}; more precisely, we fix the following data:
	\begin{itemize}
		\item an integer $k \geq 3$;
		\item a graph $G$ with $n$ vertices and $m$ edges such that:
		\begin{itemize}
				\item $G \not \in \D_k$;
				\item $G$ does not contain a clique of size $k+1$; and
				\item $G$ satisfies the inequality
				\begin{equation}\label{eq:main_inequality}
					2m \leq kn+ k-2;
				\end{equation}
		\end{itemize}
		\item a $k$-fold cover $\Cov{H} = (L, H)$ of $G$ such that $G$ is $\Cov{H}$-critical.
	\end{itemize}
	Furthermore, we assume that $G$ is a counterexample with the fewest vertices.
	
	For brevity, we denote $V \defeq V(G)$ and $E \defeq E(G)$. For a subset $U \subseteq V$, we use $\c{U}$ to denote the complement of $U$ in $V$, i.e., $\c{U} \defeq V \setminus U$. For $u \in V$ and $U \subseteq V$, set
	$$
	\deg(u) \defeq \deg_G(u) \;\;\;\;\;\text{and}\;\;\;\;\; \deg_U(u) \defeq |U \cap N_G(u)|.
	$$
	For $u \in V$, set
	$$
	\epsilon(u) \defeq \deg(u) - k,
	$$
	and for $U \subseteq V$, define
	$$
	\epsilon(U) \defeq \sum_{u \in U} \epsilon(u).
	$$
	Note that~\eqref{eq:main_inequality} is equivalent to 
	\begin{equation}\label{eq:epsilon_bound}
	\epsilon(V) \leq k-2.
	\end{equation}
	Since $G$ is $\Cov{H}$-critical, we have $\delta(G) \geq k$, i.e., $\epsilon(u) \geq 0$ for all~$u \in V$. Let
	$$
		D \defeq \{u \in V \,:\, \deg(u) = k\} = \set{u \in V \,:\, \epsilon(u) = 0}.
	$$
	Since $\epsilon(u) \geq 1$ for every $u \in \c{D}$,~\eqref{eq:epsilon_bound} yields $$|\c{D}| \leq k-2.$$
	Corollary~\ref{corl:D2} implies that $G[D]$ is a GDP-forest. Furthermore, since $n \geq k+1$, $D \neq \0$.
	
	From now on, we refer to the vertices of $H$ as \emph{colors} and to the independent sets in $H$ as \emph{colorings}. For $I$, $I' \in \I(H)$, we say that $I'$ \emph{extends} $I$ if $I'\supseteq I$. For $I \in \I(H)$, let
	$$
	\dom(I) \defeq \set{u \in V \,:\, I \cap L(u) \neq \0}.
	$$
	Since $G$ is $\Cov{H}$-critical, there is no coloring $I$ with $\dom(I) = V$; but for every proper subset $U \subset V$, there exists a coloring $I$ with $\dom(I) = U$.
	
	For $I \in \I(H)$ and $u  \in \c{(\dom(I))}$, let
	$$L_I(u) \defeq L(u) \setminus N_H(I).$$
	In other words, $L_I(u)$ is the set of all colors available for $u$ in a coloring extending $I$.
	
	For $u \in V$ and $U \subseteq V$, let
	$$
	\phi_U(u) \defeq \deg_U(u) - \epsilon(u).
	$$
	In particular, if $u \in D$, then $\phi_U(u) = \deg_U(u)$. Note that
	$$
	\phi_U(u) = \deg_U(u) - (\deg(u) - k) = k - (\deg(u) - \deg_U(u)) = k - \deg_{\c{U}}(u).
	$$
	Therefore, if $I$ is a coloring such that $\dom(I) = \c{U}$, then for all $u \in U$,
	\begin{equation}\label{eq:phi}
	|L_I(u)| \geq \phi_U(u).
	\end{equation}
	
	\subsection{A property of GDP-forests}
	
	The following simple general property of GDP-forests will be quite useful:
	
	\begin{prop}\label{prop:kGDP}
		Let $F$ be a nonempty GDP-forest of maximum degree at most $k$ not containing a clique of size $k+1$. Then
		\begin{equation}\label{eq:GDP_count}\sum_{u\in V(F)}(k - \deg_F(u)) \geq k,\end{equation} with equality only if $F \cong K_1$ or $F \cong K_k$.
	\end{prop}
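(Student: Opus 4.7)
The plan is to translate the claim into a purely additive statement about blocks. Set $s(F) \defeq \sum_{u \in V(F)}(k - \deg_F(u)) = k|V(F)| - 2|E(F)|$, and analogously $s(B) \defeq k|V(B)| - 2|E(B)|$ for each block $B$ of $F$. The proof will consist of (i) a counting identity that expresses $s(F)$ as $kc$ plus a sum of block contributions, where $c$ is the number of components, (ii) a case analysis showing each block contribution is nonnegative, and (iii) a short degree argument ruling out equality with more than one block.

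For step (i), I would use the standard fact that in the block--cut tree of a connected graph with $r$ blocks, each cut vertex $v$ lying in $b(v)$ blocks is overcounted $b(v)-1$ times, and $\sum_v (b(v)-1) = r - 1$. Summing over components, for a GDP-forest with $c$ components and $r$ blocks in total one obtains $|V(F)| = \sum_B |V(B)| - (r-c)$ and $|E(F)| = \sum_B |E(B)|$, so
\[
s(F) \;=\; kc \,+\, \sum_B \bigl(s(B) - k\bigr).
\]
For step (ii), the admissible blocks are $K_t$ with $2 \leq t \leq k$ and $C_\ell$ with $\ell \geq 3$. A direct calculation gives $s(K_t) - k = (t-1)(k-t) \geq 0$, with equality iff $t = k$, and $s(C_\ell) - k = \ell(k-2) - k \geq 0$ for $k \geq 3$ and $\ell \geq 3$, with equality only in the case $k = \ell = 3$, where $C_3 = K_3 = K_k$.

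Combining the two steps, $s(F) \geq kc \geq k$ since $F$ is nonempty. For equality, one needs $c = 1$ and $s(B) = k$ for every block, i.e., every block of $F$ is isomorphic to $K_k$. If $F$ has no blocks, then $F \cong K_1$; if $F$ has exactly one block, then $F \cong K_k$; and if $F$ had two or more $K_k$-blocks, any cut vertex shared by two of them would have degree at least $2(k-1) > k$, violating the maximum degree assumption. This rules out the remaining case and finishes the equality analysis.

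The main obstacle is really just bookkeeping: getting the block--cut tree identity right (including the degenerate case $F = K_1$, where there are no blocks and $s(F) = k$ directly) and verifying the inequality $\ell(k-2) \geq k$ for cycle blocks uniformly in $k \geq 3$. Neither step involves any DP-coloring content; Proposition~\ref{prop:kGDP} is a purely combinatorial statement about graphs whose blocks are cliques or cycles.
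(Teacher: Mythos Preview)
Your proof is correct and follows essentially the same approach as the paper's: both reduce to a block-by-block analysis showing each admissible block $B$ satisfies $s(B) \geq k$, with equality only for $K_k$. The only packaging difference is that you use the block--cut tree identity $s(F) = kc + \sum_B(s(B)-k)$ directly, whereas the paper phrases the same computation as an induction on leaf blocks (attaching a leaf block $B$ changes $s$ by exactly $s(B)-k$); your treatment of the equality case is also a bit more explicit than the paper's sketch.
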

	\begin{proof}
		It suffices to establish the proposition for the case when $F$ is connected, i.e., a GDP-tree. If $F$ is $2$-connected, i.e., a clique or a cycle, then the statement follows via a simple calculation. It remains to notice that adding a leaf block to a GDP-tree of maximum degree at most $k$ cannot decrease the quantity on the left-hand side of~\eqref{eq:GDP_count}.
	\end{proof}
	
	\begin{corl}\label{corl:kGDP}
		Let $U \subseteq D$ be the vertex set of a connected component of $G[D]$. Then $$|E_G(U, \c{D})| \geq k,$$ with equality only if $G[U] \cong K_k$.
	\end{corl}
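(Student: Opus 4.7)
The plan is to rewrite $|E_G(U, \c{D})|$ as a sum to which Proposition~\ref{prop:kGDP} directly applies, and then invoke the bound $|\c{D}| \leq k-2$ already established to exclude the spurious $K_1$ equality case.

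First I would note that since $U$ is the vertex set of a connected component of $G[D]$, every neighbor of a vertex $u \in U$ that lies in $D$ must in fact lie in $U$ (otherwise such a neighbor would be in a different component than $u$). Consequently, for each $u \in U$, the edges incident to $u$ split as
$$\deg_G(u) \;=\; \deg_{G[U]}(u) + \deg_{\c{D}}(u),$$
and $\deg_G(u) = k$ because $u \in D$. Summing this over $u \in U$ gives
$$|E_G(U, \c{D})| \;=\; \sum_{u \in U}\bigl(k - \deg_{G[U]}(u)\bigr).$$

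Next I would verify that $F \defeq G[U]$ meets all hypotheses of Proposition~\ref{prop:kGDP}: it is a GDP-tree, being a connected component of the GDP-forest $G[D]$ (by Corollary~\ref{corl:D2}); its maximum degree is at most $k$, since $\deg_{G[U]}(u) \leq \deg_G(u) = k$ for every $u \in U$; and it contains no $K_{k+1}$, since $G$ contains none by assumption. Proposition~\ref{prop:kGDP} then immediately yields
$$|E_G(U, \c{D})| \;\geq\; k,$$
with equality only when $G[U] \cong K_1$ or $G[U] \cong K_k$.

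The last step is to eliminate the $K_1$ alternative. If $G[U] \cong K_1$, i.e.\ $U = \set{u}$ for a single vertex, then all $k$ neighbors of $u$ lie in $\c{D}$, forcing $|\c{D}| \geq k$; this contradicts the inequality $|\c{D}| \leq k - 2$ derived right after~\eqref{eq:epsilon_bound}. Hence equality forces $G[U] \cong K_k$, as claimed.

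There is no real obstacle: the corollary is essentially a direct translation of Proposition~\ref{prop:kGDP} into graph-theoretic terms. The only subtlety is remembering that the $K_1$ case, although permitted by Proposition~\ref{prop:kGDP} in the abstract, is precluded in our setting by the bound $|\c{D}| \leq k-2$.
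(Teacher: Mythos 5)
Your proof is correct and follows essentially the same route as the paper's: rewrite $|E_G(U,\c{D})|$ as $\sum_{u\in U}(k-\deg_U(u))$, apply Proposition~\ref{prop:kGDP} to the GDP-tree $G[U]$, and rule out the $K_1$ equality case via $|\c{D}|\leq k-2$. The extra care you take to verify the hypotheses of Proposition~\ref{prop:kGDP} is sound but not a departure in method.
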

	\begin{proof}
		We have
		$$
			|E_G(U, \c{D})| = \sum_{u \in U} \deg_{\c{D}}(u) = \sum_{u \in U} \deg_{\c{U}}(u)  = \sum_{u \in U} (k - \deg_U(u)).
		$$
		By Proposition~\ref{prop:kGDP} applied to $G[U]$, the latter quantity is at least $k$, with equality only if $G[U] \cong K_1$ or $G[U] \cong K_k$. It remains to notice that $G[U]\not \cong K_1$, since $\deg(u) = k$ for each $u \in U$, while $|\c{D}| \leq k - 2$.
	\end{proof}
	
	\subsection{Enhanced vertices}
	
	The following definition will play a crucial role in our argument:
	
	\begin{defn}\label{defn:enhanced}
		Let $I$ be a coloring and let $U \defeq \c{(\dom(I))}$. A vertex $u \in U \cap D$ is \emph{enhanced} by~$I$, or $I$ \emph{enhances} $u$, if $|L_I(u)| > \deg_U(u)$.
	\end{defn}
	
	\begin{remk}
		Note that, in the context of Definition~\ref{defn:enhanced}, we always have $|L_I(u)| \geq \deg_U(u)$.
	\end{remk}
	
	The importance of Definition~\ref{defn:enhanced} stems from the following lemma:
	
	\begin{lemma}\label{lemma:spoilt}
		Let $I$ be a coloring and let $U \defeq \c{(\dom(I))}$.
		\begin{enumerate}[wide,label=\normalfont{(\roman*)}]
			\item\label{item:spoilt:ext} Suppose that $I'$ is a coloring extending $I$. Let $u \in \c{(\dom(I'))} \cap D$. If $u$ is enhanced by~$I$, then it is also enhanced by $I'$.
			\item\label{item:spoilt:use} Let $U' \subseteq U \cap D$ be a subset such that the graph $G[U']$ is connected. Suppose that~$U'$ contains a vertex enhanced by~$I$. Then $I$ can be extended to a coloring $I'$ with $\dom(I') = \c{U} \cup U'$.
			\item\label{item:spoilt:problem} Suppose that $I$ enhances at least one vertex in each component of $G[U \cap D]$. Then $I$ cannot be extended to a coloring $I'$ with $\dom(I') \supseteq \c{D}$.
		\end{enumerate}
	\end{lemma}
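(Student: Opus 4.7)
My plan is to handle the three parts in order, using the matching structure of the cover (axioms (C3) and (C4)) together with the DP--Brooks theorem (Theorem~\ref{theo:DP_Brooks}).

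For part~\ref{item:spoilt:ext} I would argue by direct counting. Setting $U' \defeq \c{(\dom(I'))} \subseteq U$, any color in $L_I(u) \setminus L_{I'}(u)$ must be a neighbor in $H$ of some color of $I' \setminus I$. By~(C3) such a color sits in $L(v)$ for some $v \in N_G(u) \cap (U \setminus U')$, and by the matching axiom~(C4) each such $v$ eliminates at most one color of $L(u)$. Hence
\[|L_I(u)| - |L_{I'}(u)| \leq \deg_U(u) - \deg_{U'}(u),\]
so combining with $|L_I(u)| > \deg_U(u)$ yields $|L_{I'}(u)| > \deg_{U'}(u)$.

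For part~\ref{item:spoilt:use} the plan is to restrict $\Cov{H}$ to $U'$ with shrunken lists $L_I(u)$ and invoke DP--Brooks. Every $u \in U' \subseteq U \cap D$ satisfies $|L_I(u)| \geq \phi_U(u) = \deg_U(u) \geq \deg_{G[U']}(u)$ by~\eqref{eq:phi}, with strict inequality at the enhanced vertex. Since $G[U']$ is a connected subgraph of the GDP-forest $G[D]$, it is itself a GDP-tree; Theorem~\ref{theo:DP_Brooks} would force $|L_I(u)|=\deg_{G[U']}(u)$ for every $u \in U'$ if the sub-cover admitted no coloring, contradicting the strict inequality at the enhanced vertex. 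The resulting coloring of $G[U']$, unioned with $I$, is the sought extension.

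Part~\ref{item:spoilt:problem} is the main step, and my plan is to argue by contradiction by extending any such $I'$ to a full coloring of $V$, contradicting $\Cov{H}$-criticality. First I would simplify $I'$: replace it by its intersection with $\bigcup_{u \in \c{U} \cup \c{D}} L(u)$, which is still an independent subset of $H$ and still extends $I$, and now satisfies $\c{(\dom(I'))} = U \cap D$. By hypothesis each component of $G[U \cap D]$ contains a vertex enhanced by $I$, and by part~\ref{item:spoilt:ext} each such vertex is also enhanced by $I'$. Enumerating the components as $C_1, \ldots, C_t$, I would apply part~\ref{item:spoilt:use} iteratively: at each step the designated enhanced vertex of the next unprocessed component $C_i$ still lies in the uncolored set and, by part~\ref{item:spoilt:ext}, remains enhanced by the current coloring, so part~\ref{item:spoilt:use} extends the coloring into $C_i$. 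After $t$ steps every vertex of $V$ is colored, the desired contradiction. The main subtlety is the persistence of enhancement through successive extensions, which is precisely what part~\ref{item:spoilt:ext} is designed to deliver.
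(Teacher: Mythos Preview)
Your proposal is correct and follows essentially the same route as the paper's proof: part~\ref{item:spoilt:ext} is the direct counting argument behind the paper's ``immediate from the definition,'' part~\ref{item:spoilt:use} is exactly the intended application of Theorem~\ref{theo:DP_Brooks}, and part~\ref{item:spoilt:problem} is the same reduce-then-extend contradiction. One small remark: in part~\ref{item:spoilt:use} you do not actually need the observation that $G[U']$ is a GDP-tree; the contradiction with Theorem~\ref{theo:DP_Brooks} already comes from the strict inequality $|L_I(u)|>\deg_{G[U']}(u)$ at the enhanced vertex.
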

	\begin{proof}
		Since \ref{item:spoilt:ext} is an immediate corollary of the definition and \ref{item:spoilt:use} follows from Theorem~\ref{theo:DP_Brooks}, it only remains to prove~\ref{item:spoilt:problem}. To that end, suppose, under the assumptions of~\ref{item:spoilt:problem}, that $I'$ is a coloring extending $I$ with $\dom(I') \supseteq \c{D}$. Reducing $I'$ if necessary, we may arrange that $\dom(I') = \c{U} \cup \c{D}$. Then, by~\ref{item:spoilt:ext}, $I'$ enhances at least one vertex in each component of $G[U \cap D]$. Applying~\ref{item:spoilt:use} to each connected component of $G[U \cap D]$, we can extend $I'$ to a coloring of the entire graph~$G$; a contradiction.
	\end{proof}
	
	The next lemma gives a convenient sufficient condition under which a given coloring can be extended so that the resulting coloring enhances a particular vertex:
	
	\begin{lemma}\label{lemma:spoiling}
		Let $I$ be a coloring and let $U \defeq \c{(\dom(I))}$. Let $u \in U \cap D$ and suppose that $A \subseteq U \cap N_G(u)$ is an independent set in $G$. Moreover, suppose that
		$$\min\set{\phi_U(v)\,:\, v \in A} > 0\;\;\;\;\;\text{and}\;\;\;\;\;\sum_{v \in A} \phi_U(v) > \deg_U(u).$$
		Then there is a coloring $I'$ with $\dom(I') = \c{U} \cup A$ that extends $I$ and enhances $u$.
	\end{lemma}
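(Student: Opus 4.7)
The goal is to construct $I' \supseteq I$ with $\dom(I') = \c{U} \cup A$ such that $|L_{I'}(u)| > \deg_U(u) - |A|$; since $A \subseteq N_G(u) \cap U$, this right-hand side equals $\deg_{U \setminus A}(u)$, so the inequality is precisely the enhancement condition. My plan is to build $I'$ by choosing, for each $v \in A$, a color $c_v \in L_I(v)$, and setting $I' \defeq I \cup \set{c_v : v \in A}$. Because $A$ is independent in $G$, there are no edges of $H$ between $L(v)$ and $L(v')$ for distinct $v, v' \in A$; thus any such choice automatically yields $I' \in \I(H)$ extending $I$ with $\dom(I') = \c{U} \cup A$, and all the content lies in controlling $|L_{I'}(u)|$.

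For that count, note that $L_{I'}(u) \subseteq L_I(u)$ and, by property~\ref{item:matching} of a cover, each $c_v$ removes at most one color from $L(u)$. Hence, if the $c_v$'s together remove at most $|A| - 1$ distinct colors from $L_I(u)$, then
\[
|L_{I'}(u)| \;\geq\; |L_I(u)| - (|A|-1) \;\geq\; \phi_U(u) - |A| + 1 \;=\; \deg_U(u) - |A| + 1,
\]
as desired. The case $|L_I(u)| > \deg_U(u)$ is automatic — any valid choice already leaves $|L_I(u)| - |A| > \deg_U(u) - |A|$ colors — so from here I would assume $|L_I(u)| = \deg_U(u)$ and argue by contradiction, supposing that every choice of $\set{c_v}_{v \in A}$ removes exactly $|A|$ distinct colors from $L_I(u)$.

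The contradiction hypothesis forces two rigidity statements on the matchings $E_H(L(u), L(v))$, $v \in A$. First, every color of $L_I(v)$ must be matched via $E_H(L(u), L(v))$ to some color of $L_I(u)$; otherwise one could pick $c_v$ with no such match, killing nothing and saving a color. Second, for distinct $v, v' \in A$, the sets of colors in $L_I(u)$ matched into $L_I(v)$ and into $L_I(v')$ must be disjoint; otherwise a common color $c \in L_I(u)$ would let us choose $c_v$ and $c_{v'}$ so that both kill $c$, again reducing the distinct count. Together these give pairwise disjoint injections $L_I(v) \hookrightarrow L_I(u)$ for $v \in A$, so
\[
\sum_{v \in A} |L_I(v)| \;\leq\; |L_I(u)| \;=\; \deg_U(u).
\]
Combined with $|L_I(v)| \geq \phi_U(v)$ from~\eqref{eq:phi}, this contradicts the hypothesis $\sum_{v \in A} \phi_U(v) > \deg_U(u)$.

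The main step requiring care is the extraction of the two rigidity statements from the contradiction hypothesis — a quantifier-management exercise about what ``no choice of the $c_v$'s saves a color'' actually forces. Once they are in place, the size comparison is a single line and the lemma follows.
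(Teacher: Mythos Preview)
Your proof is correct and follows essentially the same approach as the paper's. The only cosmetic difference is packaging: the paper first finds a coloring whose domain is a (possibly proper) subset of $\c{U}\cup A$ that already enhances $u$ (either $I$ itself, or $I\cup\{x\}$, or $I\cup\{x,y\}$ for colliding $x,y$), and then extends to all of $\c{U}\cup A$ using Lemma~\ref{lemma:spoilt}\ref{item:spoilt:ext}; you instead color all of $A$ at once and run the same pigeonhole argument by contradiction, which avoids that extension step.
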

	\begin{proof}
		Since $A \in \I(G)$ and for all $v \in A$, $\phi_U(v) > 0$ (and hence, by~\eqref{eq:phi}, $|L_I(v)| > 0$), any coloring~$I'$ with $\dom(I') \subseteq \c{U} \cup A$ can be extended to a coloring with domain $\c{U} \cup A$. Therefore, it suffices to find a coloring that extends $I$ and enhances~$u$ and whose domain is contained in $\c{U} \cup A$. 
		
		If $u$ is enhanced by $I$ itself, then we are done, so assume that $|L_I(u)| = \deg_U(u)$. If for some $v \in A$, there is $x \in L_I(v)$ with no neighbor in $L_I(u)$, then $u$ is enhanced by $I \cup \set{x}$, and we are done again. Thus, we may assume that for every $v \in A$, the matching $E_H(L_I(v), L_I(u))$ saturates $L_I(v)$. For each $v \in A$ and $x \in L_I(v)$, let $f(x)$ denote the neighbor of $x$ in $L_I(u)$. Since $\sum_{v \in A} \phi_U(v) > \deg_U(u)$, and hence, by~\eqref{eq:phi}, $\sum_{v \in A} |L_I(v)| > |L_I(u)|$, there exist distinct vertices $v$, $w \in A$ and colors $x \in L_I(v)$, $y \in L_I(w)$ such that $f(x) = f(y)$. Then $u$ is enhanced by the coloring $I \cup \set{x, y}$, and the proof is complete.
	\end{proof}
	
	\begin{corl}\label{corl:common_neighbor}
		Suppose that $u$, $u_1$, $u_2 \in D$ are distinct vertices such that $u u_1$, $u u_2 \in E$, while $u_1 u_2 \not \in E$. Then the graph $G[D] - u_1 - u_2$ is disconnected.
	\end{corl}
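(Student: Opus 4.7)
Proof plan. Suppose for contradiction that $G[D] - u_1 - u_2$ is connected, and write $D' \defeq D \setminus \set{u_1, u_2}$. Since $u \in D'$ is adjacent in $G$ to both $u_1$ and $u_2$, both $G[D]$ and $G[D']$ are connected. My plan is to manufacture a full $\Cov{H}$-coloring of $G$, contradicting $\Cov{H}$-criticality.

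The centerpiece is Lemma~\ref{lemma:spoiling}, which I would apply in order to obtain a coloring $I'$ with $\dom(I') = \c{D'}$ that enhances $u$; then, since $u \in D'$ and $G[D']$ is connected, Lemma~\ref{lemma:spoilt}\ref{item:spoilt:use} (with $U' = D' \subseteq D$) extends $I'$ to an $\Cov{H}$-coloring of all of $G$. To get such an $I'$, I would fix an arbitrary coloring $I_0$ with $\dom(I_0) = \c{D}$ (which exists by $\Cov{H}$-criticality) and invoke Lemma~\ref{lemma:spoiling} with $U \defeq D$, $A \defeq \set{u_1, u_2}$, and target vertex $u$. The verifications $A \subseteq D \cap N_G(u)$, $A \in \I(G)$ (since $u_1 u_2 \notin E$), and $\phi_D(u_i) = \deg_D(u_i) \ge 1$ (since $u \in N_D(u_i)$) are immediate.

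The remaining hypothesis of Lemma~\ref{lemma:spoiling} is $\phi_D(u_1) + \phi_D(u_2) > \deg_D(u)$, i.e.,
\[
\deg_D(u_1) + \deg_D(u_2) > \deg_D(u),
\]
and checking this is where I expect the main work to lie. Rewriting in terms of $\c{D}$-degrees, the inequality becomes $\deg_{\c D}(u_1) + \deg_{\c D}(u_2) < k + \deg_{\c D}(u)$. If it fails, then using the bound $|\c D| \le k - 2$ (derived from $\epsilon(V) \le k - 2$) together with inclusion-exclusion inside $\c D$, I would show that $u_1$ and $u_2$ share at least $\deg_{\c D}(u) + 2$ common neighbors in $\c D$, and that at least two of those common neighbors lie outside $N(u)$.

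Given such a common neighbor $w \in N(u_1) \cap N(u_2) \cap \c D$ with $w \notin N(u)$, I would rerun the argument with the enlarged set $U \defeq D \cup \set{w}$ and the restricted coloring $I_0 \vert_{\c D \setminus \set{w}}$, whose existence is again guaranteed by criticality. In this enlarged setup $\phi_U(u_i) = \deg_D(u_i) + 1$ while $\deg_U(u) = \deg_D(u)$, so $\phi_U(u_1) + \phi_U(u_2)$ exceeds $\deg_U(u)$ by two more than before; iterating with additional such $w$'s (all available by the earlier pigeonhole count) eventually gives the strict inequality, and Lemma~\ref{lemma:spoiling} yields the desired coloring $I'$. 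The delicate final step, which I expect to be the hardest to execute cleanly, is extending this $I'$ past the finitely many chosen $w$'s: here one uses that each such $w$ lies in $\c D$ with at least the two preselected neighbors $u_1, u_2$ and benefits from the enhancement guarantees propagated by Lemma~\ref{lemma:spoilt}\ref{item:spoilt:ext}, so that a short extension argument (together with the connectedness of $G[D']$) completes the coloring of $G$ and yields the contradiction.
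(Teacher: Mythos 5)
Your strategy — enhance $u$ via Lemma~\ref{lemma:spoiling} and then invoke Lemma~\ref{lemma:spoilt} to extend to a full $\Cov{H}$-coloring (contradiction) assuming $G[D]-u_1-u_2$ is connected — is the right high-level plan, and it matches what the paper does. But your choice of $U = D$ creates a gap that you correctly diagnose and then try to patch iteratively, whereas the paper sidesteps it entirely by making a better choice of $U$: apply Lemma~\ref{lemma:spoiling} with $U = V$, i.e., to the \emph{empty} coloring. Since $u_1, u_2 \in D$, one has $\phi_V(u_1) = \phi_V(u_2) = k$ and $\deg_V(u) = k$, so the hypothesis $\phi_V(u_1) + \phi_V(u_2) = 2k > k = \deg_V(u)$ holds trivially and no pigeonhole bookkeeping is needed. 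This yields $x_1 \in L(u_1)$, $x_2 \in L(u_2)$ such that $\set{x_1, x_2}$ enhances $u$; then, because $|L_{\set{x_1,x_2}}(v)| \geq k-2 \geq |\c{D}|$ for every $v \in \c{D}$, a greedy step extends $\set{x_1,x_2}$ to a coloring $I$ with $\dom(I) = \set{u_1,u_2}\cup\c{D}$, and Lemma~\ref{lemma:spoilt}\ref{item:spoilt:ext} carries the enhancement of $u$ through. At that point Lemma~\ref{lemma:spoilt}\ref{item:spoilt:problem} forces some component of $G[D]-u_1-u_2$ to contain no enhanced vertex, and since $u$ is enhanced, disconnectedness follows immediately.

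The conceptual reason your $U = D$ attempt runs into trouble is that by fixing a coloring of $\c{D}$ \emph{before} choosing colors for $u_1, u_2$, you forfeit the slack in $\phi(u_1), \phi(u_2)$ coming from their edges into $\c{D}$; the inequality $\deg_D(u_1) + \deg_D(u_2) > \deg_D(u)$ then genuinely can fail. Your iterative repair (pull vertices $w \in \c{D}$ out of the coloring to boost $\phi_U(u_i)$) can be made to work — the pigeonhole count of available $w$'s and the greedy re-extension past the $w$'s both check out with some care — but it is substantially longer and you flag it yourself as the part you were least sure about. The paper's ordering trick (color $u_1, u_2$ first, then $\c{D}$, using Lemma~\ref{lemma:spoilt}\ref{item:spoilt:ext} to preserve the enhancement) eliminates the entire iteration.
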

	\begin{proof}
		Note that, since $u$, $u_1$, $u_2 \in D$, we have
		$$
			\phi_V(u_1) = \phi_V(u_2) = k \;\;\;\;\;\text{and}\;\;\;\;\; \deg(u) = k,
		$$
		so, by Lemma~\ref{lemma:spoiling}, there exist $x_1 \in L(u_1)$ and $x_2 \in L(u_2)$ such that $u$ is enhanced by the coloring $\set{x_1, x_2}$. Since for all $v \in \c{D}$,
		$$|L_{\set{x_1, x_2}}(v)| \geq |L(v)| - |\set{x_1, x_2}| = k-2 \geq |\c{D}|,$$
		we can extend $\set{x_1, x_2}$ to a coloring $I$ with $\dom(I) = \set{u_1, u_2}\cup\c{D}$. Due to Lemma~\ref{lemma:spoilt}\ref{item:spoilt:problem}, at least one connected component of the graph $G[D]-u_1-u_2$ contains no vertices enhanced by~$I$. Since, by Lemma~\ref{lemma:spoilt}\ref{item:spoilt:ext}, $I$ enhances $u$, $G[D] - u_1 - u_2$ is disconnected, as desired.
	\end{proof}
	
	We will often apply Lemma~\ref{lemma:spoiling} in the form of the following corollary:
	
	\begin{corl}\label{corl:spoiling}
		Suppose that $u \in D$ and let $v_1$, $v_2 \in \c{D} \cap N_G(u)$ be distinct vertices such that $v_1v_2 \not \in E$. Let $U \subseteq D$ be any set such that $u \in U$ and the graph $G[U]$ is connected. Then
		\begin{align*}
		\begin{array}{rcl}
		\text{either} & \min\set{\phi_U(v_1), \phi_U(v_2)} &\leq 0,\\
		\text{or} & \phi_U(v_1) + \phi_U(v_2) &\leq \deg_U(u) + 2.
		\end{array}
		\end{align*}
	\end{corl}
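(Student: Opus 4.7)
The conclusion is a disjunction of two inequalities, and their joint negation---namely $\min\set{\phi_U(v_1), \phi_U(v_2)} > 0$ together with $\phi_U(v_1) + \phi_U(v_2) > \deg_U(u) + 2$---almost literally matches the two hypotheses of Lemma~\ref{lemma:spoiling} applied with $A = \set{v_1, v_2}$. The plan is therefore to assume this joint negation and use Lemma~\ref{lemma:spoiling} to build a coloring that Lemma~\ref{lemma:spoilt}\ref{item:spoilt:use} can then extend to all of $V$, contradicting $\Cov{H}$-criticality.

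First I would set $U^* \defeq U \cup \set{v_1, v_2}$; since $U \subseteq D$ and $v_1, v_2 \in \c{D}$, this adds exactly two new vertices to $U$. By $\Cov{H}$-criticality, there exists a coloring $I$ with $\dom(I) = \c{U^*}$. I would then invoke Lemma~\ref{lemma:spoiling} on this $I$, with the vertex $u \in U^* \cap D$ and $A \defeq \set{v_1, v_2} \subseteq U^* \cap N_G(u)$ (independent because $v_1v_2 \not\in E$). The key bookkeeping is that $\c{U^*} = \c{U} \setminus \set{v_1,v_2}$, and since $v_1, v_2$ are non-adjacent to each other and to themselves, one obtains $\phi_{U^*}(v_i) = k - \deg_{\c{U^*}}(v_i) = k - \deg_{\c{U}}(v_i) = \phi_U(v_i)$ for $i \in \set{1, 2}$, while $\deg_{U^*}(u) = \deg_U(u) + 2$ because both $v_1, v_2$ are neighbors of $u$. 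Thus the hypotheses of Lemma~\ref{lemma:spoiling} translate into precisely the two standing assumptions, and the lemma delivers a coloring $I'$ extending $I$ with $\dom(I') = \c{U^*} \cup \set{v_1, v_2} = \c{U}$ that enhances $u$.

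To finish, I would apply Lemma~\ref{lemma:spoilt}\ref{item:spoilt:use} to $I'$ with $U' \defeq U$: this is legal because $U \subseteq U \cap D$ (as $U \subseteq D$), the graph $G[U]$ is connected by hypothesis, and $u \in U$ is enhanced by $I'$. The lemma then extends $I'$ to a coloring with domain $\c{U} \cup U = V$, a direct contradiction to $\Cov{H}$-criticality. The only non-routine ingredient is the arithmetic in the middle paragraph: enlarging $U$ by $\set{v_1, v_2}$ has to preserve $\phi_U(v_1)$ and $\phi_U(v_2)$ while incrementing $\deg_U(u)$ by exactly $2$, and it is this shift that accounts for the $``+2"$ appearing in the second clause of the corollary.
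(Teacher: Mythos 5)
Your proof is correct and follows essentially the same route as the paper: enlarge $U$ to $U^* = U \cup \set{v_1, v_2}$, note that $\phi_{U^*}(v_i) = \phi_U(v_i)$ (using $v_1v_2 \notin E$) and $\deg_{U^*}(u) = \deg_U(u) + 2$, then fix a coloring with domain $\c{(U^*)}$ and apply Lemma~\ref{lemma:spoiling} to obtain a coloring of $\c{U}$ enhancing $u$. The only cosmetic difference is the final contradiction: you invoke Lemma~\ref{lemma:spoilt}\ref{item:spoilt:use} to extend to a full $\Cov{H}$-coloring and contradict criticality, whereas the paper observes directly that such a coloring $I'$ already has domain containing $\c{D}$ and so is ruled out by Lemma~\ref{lemma:spoilt}\ref{item:spoilt:problem}; both are immediate and equivalent.
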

	\begin{proof}
		Notice that
		$$
			\phi_{U \cup \set{v_1, v_2}} (v_i) = \phi_U(v_i) \;\;\;\text{for each } i \in \set{1,2}, \;\;\;\;\;\text{and}\;\;\;\;\; \deg_{U \cup \set{v_1, v_2}}(u) = \deg_U(u) + 2.
		$$
		Therefore, is the claim fails, then we can first fix any coloring $I$ with $\dom(I) = \c{(U \cup \set{v_1, v_2})}$, and then apply Lemma~\ref{lemma:spoiling} to extend it to a coloring $I'$ with $\dom(I') = \c{U}$ that enhances $u$. Since $G[U]$ is connected, such a coloring cannot exist by Lemma~\ref{lemma:spoilt}\ref{item:spoilt:problem}.
	\end{proof}	
	
	The following observation can be viewed as an analog of Lemma~\ref{lemma:spoilt}\ref{item:spoilt:use} for edges instead of vertices:
	
	\begin{lemma}\label{lemma:edge_extends}
		Let $I$ be a coloring and let $U \defeq \c{(\dom(I))}$. Let $U' \subseteq U \cap D$ be a subset such that the graph $G[U']$ is connected and let $u_1$, $u_2 \in U'$ be adjacent non-cut vertices in $G[U']$. Suppose that the matching $E_H(L_I(u_1), L_I(u_2))$ is not perfect. Then $I$ can be extended to a coloring $I'$ with $\dom(I') = \c{U} \cup U'$.
	\end{lemma}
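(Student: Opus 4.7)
The plan is to recognize this lemma as an immediate consequence of Theorem~\ref{theo:DP_Brooks} applied to the induced subgraph $G[U']$ together with the cover naturally inherited from $\Cov{H}$ once $I$ has been fixed.

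First, I would form the cover $\Cov{H}' = (L', H')$ of $G[U']$ by setting $L'(v) \defeq L_I(v)$ for every $v \in U'$ and letting $H'$ be the subgraph of $H$ induced by $\bigcup_{v \in U'} L_I(v)$. All four axioms of Definition~\ref{defn:cover} for $\Cov{H}'$ are inherited directly from $\Cov{H}$, since $L_I(v) \subseteq L(v)$ and $H'$ is an induced subgraph of $H$; in particular, $E_{H'}(L'(u_1), L'(u_2))$ coincides with $E_H(L_I(u_1), L_I(u_2))$ and is therefore not a perfect matching.

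Next, I would verify that $\Cov{H}'$ is a degree cover of $G[U']$. Since $U' \subseteq U \cap D$, every $v \in U'$ satisfies $\epsilon(v) = 0$, so $\phi_U(v) = \deg_U(v)$, and then by~\eqref{eq:phi} together with the inclusion $U' \subseteq U$,
\[
|L'(v)| \;=\; |L_I(v)| \;\geq\; \phi_U(v) \;=\; \deg_U(v) \;\geq\; \deg_{U'}(v) \;=\; \deg_{G[U']}(v).
\]

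Finally, $G[U']$ is connected, and $u_1$, $u_2$ are adjacent non-cut vertices of $G[U']$. If $G[U']$ were not $\Cov{H}'$-colorable, Theorem~\ref{theo:DP_Brooks} would force $E_{H'}(L'(u_1), L'(u_2))$ to be a perfect matching, contradicting the hypothesis. Hence $G[U']$ admits an $\Cov{H}'$-coloring $J$, and then $I' \defeq I \cup J$ is a coloring of $H$ with $\dom(I') = \c{U} \cup U'$ extending $I$, as required. There is no serious obstacle here beyond correctly setting up the induced cover; the only routine check is the list-size bound $|L'(v)| \geq \deg_{G[U']}(v)$, which is transparent from $\epsilon(v) = 0$ and $U' \subseteq U$.
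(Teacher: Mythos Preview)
Your proof is correct and follows exactly the approach the paper intends: the paper's proof consists of the single line ``Follows from Theorem~\ref{theo:DP_Brooks},'' and your argument is precisely the natural unpacking of that reference---forming the induced cover on $G[U']$, checking it is a degree cover via $\epsilon(v)=0$ and $U'\subseteq U$, and invoking the perfect-matching clause of Theorem~\ref{theo:DP_Brooks} for the contradiction.
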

	\begin{proof}
		Follows from Theorem~\ref{theo:DP_Brooks}.
	\end{proof}
	
	\subsection{Vertices of small degree}
	
	In this subsection we establish some structural properties that $G$ must possess if the minimum degree of the graph $G[D]$ is ``small'' (namely at most~$2$).
	
	\begin{lemma}\label{lemma:no_small_degrees}
		\begin{enumerate}[wide,label=\normalfont{(\roman*)}]
			\item \label{item:no_small_degrees:2} The minimum degree of $G[D]$ is at least $2$.
			\item \label{item:no_small_degrees:bounds} If there is a vertex $u \in D$ such that $\deg_D(u) = 2$, then $|\c{D}| = k-2$, $u$ is adjacent to every vertex in~$\c{D}$, and $\epsilon(v) = 1$ for all $v \in \c{D}$.
			\item \label{item:no_small_degrees:structure} If the graph $G[D]$ has a connected component with at least $3$ vertices of degree $2$, then $G[\c{D}]$ is a disjoint union of cliques.
			\item \label{item:no_small_degrees:more_structure} If the graph $G[D]$ has a connected component with at least $4$ vertices of degree $2$, then $G[\c{D}] \cong K_{k-2}$.
		\end{enumerate}
	\end{lemma}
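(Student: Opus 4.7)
The plan is to handle the four parts in order of increasing difficulty, with (i), (ii), and (iv) following from short applications of the tools already established, and (iii) being the main obstacle. Parts (i) and (ii) are purely combinatorial: each $u \in D$ satisfies $\deg_D(u) + \deg_{\c{D}}(u) = k$ together with $\deg_{\c{D}}(u) \leq |\c{D}| \leq k-2$, which immediately forces $\deg_D(u) \geq 2$, and assuming $\deg_D(u) = 2$ collapses every inequality in this chain to equality: $\deg_{\c{D}}(u) = k-2 = |\c{D}|$ and $N_G(u) \supseteq \c{D}$. Finally, $\sum_{v \in \c{D}} \epsilon(v) = \epsilon(V) \leq k-2$ combined with $\epsilon(v) \geq 1$ on $\c{D}$ and $|\c{D}| = k-2$ forces $\epsilon(v) = 1$ throughout $\c{D}$, giving~(ii).

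For part (iv), I would argue by contradiction: suppose $v_1, v_2 \in \c{D}$ are non-adjacent. By~(ii), each of the four degree-$2$ vertices $u_1, u_2, u_3, u_4$ in the component $C_0$ is adjacent to both $v_1$ and $v_2$, so $\deg_{C_0}(v_i) \geq 4$ and hence $\phi_{C_0}(v_i) \geq 3$ for $i = 1, 2$. Applying Corollary~\ref{corl:spoiling} with $u = u_1$ and $U = C_0$ forces $\phi_{C_0}(v_1) + \phi_{C_0}(v_2) \leq \deg_{C_0}(u_1) + 2 = 4$, contradicting the sum $\geq 6$. Hence no such non-adjacent pair exists and, by~(ii), $G[\c{D}] \cong K_{k-2}$.

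For part (iii), the analogous application of Corollary~\ref{corl:spoiling} yields only the tight inequality $\phi_{C_0}(v_1) + \phi_{C_0}(v_2) \leq 4$ for non-adjacent $v_1, v_2 \in \c{D}$. Combined with the lower bound $\phi_{C_0}(v_i) \geq 2$ coming from the three degree-$2$ vertices of $C_0$, this forces $\deg_{C_0}(v_1) = \deg_{C_0}(v_2) = 3$ with $N_{C_0}(v_1) = N_{C_0}(v_2) = \{u_1, u_2, u_3\}$. Supposing for contradiction that $G[\c{D}]$ contains an induced path $v_1 v_2 v_3$ with $v_1 v_3 \notin E$, I would invoke Lemma~\ref{lemma:spoiling} with $A = \{v_1, v_3\}$ and $u = u_1$ (starting from $I = \0$, $U = V$) to obtain a partial coloring of $\{v_1, v_3\}$ that enhances $u_1$, and then extend it to a full coloring $I$ of $\c{D}$ — possible since $G[\c{D}]$ has maximum degree at most $k-3$ while the restricted lists have size at least $k-2$ — preserving the enhancement via Lemma~\ref{lemma:spoilt}\ref{item:spoilt:ext}.

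The hard part is closing the argument when $G[D]$ has components beyond $C_0$: if $G[D] = C_0$, Lemma~\ref{lemma:spoilt}\ref{item:spoilt:problem} immediately yields a contradiction, but otherwise we must also enhance at least one vertex in each remaining component of $G[D]$ before triggering Lemma~\ref{lemma:spoilt}\ref{item:spoilt:problem}. Here the plan is to exploit the flexibility from having three degree-$2$ vertices $u_1, u_2, u_3$ (each adjacent to all of $\c{D}$ by~(ii)) together with the constraint $N_{C_0}(v_1) = N_{C_0}(v_3) = \{u_1, u_2, u_3\}$ derived above: for each remaining component $C$ of $G[D]$, a separate application of Lemma~\ref{lemma:spoiling} with a carefully chosen independent subset of $\c{D}$ should produce an enhancement inside $C$, possibly in combination with Corollary~\ref{corl:common_neighbor} to rule out obstructing configurations within $D$.
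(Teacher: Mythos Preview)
Your treatment of parts (i), (ii), and (iv) is correct and essentially identical to the paper's.

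For part (iii), however, there is a genuine gap, and it is precisely the one you identify in your last paragraph. By starting from $I=\varnothing$ and $U=V$, you obtain a coloring of $\c D$ that enhances $u_1\in C_0$, but Lemma~\ref{lemma:spoilt}\ref{item:spoilt:problem} then requires an enhanced vertex in \emph{every} component of $G[D]$, and once all of $\c D$ is colored you have no room left to create further enhancements. Your proposed remedy---separate applications of Lemma~\ref{lemma:spoiling} for each remaining component---cannot work as stated, because Lemma~\ref{lemma:spoiling} needs the set $A$ to lie in the \emph{uncolored} part, and you have already committed to colors on all of $\c D$. The extra structural facts you derive (e.g.\ $N_{C_0}(v_1)=N_{C_0}(v_3)=\{u_1,u_2,u_3\}$) do not help here.

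The paper sidesteps this entirely by \emph{not} starting from the empty coloring. Instead it first fixes any coloring $I_0$ with $\dom(I_0)=\c{(C_0\cup\{v_1,v_2,v_3\})}$ (which exists by criticality); the other components of $G[D]$ are thus already colored and impose no further obligations. The price is that Lemma~\ref{lemma:spoiling} must now be applied with $U=C_0\cup\{v_1,v_2,v_3\}$ rather than $U=V$, and here the naive choice $A=\{v_1,v_3\}$ fails: one only gets $\phi_U(v_1)+\phi_U(v_3)\geq 4 = \deg_U(u_1)$, with no strict inequality. The trick is that the \emph{middle} vertex $v_2$ of the induced path, being adjacent to both $v_1$ and $v_3$, contributes an extra $+1$ to each $\phi_U(v_i)$, giving $\phi_U(v_1)+\phi_U(v_3)\geq 6 > 5 = \deg_U(u_1)$. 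After Lemma~\ref{lemma:spoiling} produces an enhancement of $u_1$ with $v_1,v_3$ colored, one colors $v_2$ last (its list is still nonempty since $\phi_{C_0}(v_2)\geq 2$), and Lemma~\ref{lemma:spoilt}\ref{item:spoilt:problem} applies directly to the single component $C_0$. So the missing idea is: pre-color the complement of $C_0\cup\{v_1,v_2,v_3\}$ first, and keep the middle vertex $v_2$ uncolored to supply the slack for Lemma~\ref{lemma:spoiling}.
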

	\begin{proof}
		\ref{item:no_small_degrees:2} For each $u \in D$, we have
		$$
			k-2 \geq |\c{D}| \geq \deg_{\c{D}}(u) =  k - \deg_D(u),
		$$
		so $\deg_D(u) \geq 2$.
		
		\ref{item:no_small_degrees:bounds} If $u \in D$ and $\deg_D(u) = 2$, then $u$ has exactly $k-2$ neighbors in $\c{D}$. Thus,
		$$\epsilon(\c{D}) = |\c{D}| = k-2,$$ which implies all the statements in~\ref{item:no_small_degrees:bounds}.
		
		\ref{item:no_small_degrees:structure} Let $U \subseteq D$ be the vertex set of a connected component of $G[D]$ such that $G[U]$ contains at least $3$ vertices of degree $2$. Suppose, towards a contradiction, that $G[\c{D}]$ is not a disjoint union of cliques, i.e., there exist distinct vertices $v_0$, $v_1$, $v_2 \in \c{D}$ such that $v_0v_1$, $v_0v_2 \in E$, while $v_1v_2 \not \in E$. By~\ref{item:no_small_degrees:bounds}, each vertex in $\c{D}$ is adjacent to every vertex of degree $2$ in $G[D]$, $|\c{D}| = k-2$, and $\epsilon(v) = 1$ for all $v \in \c{D}$. Thus,
		$$
		\phi_{U \cup \set{v_0, v_1, v_2}} (v_i) = \deg_{U\cup \set{v_0, v_1, v_2}}(v_i) - \epsilon(v_i) \geq (3 + 1) -1 =  3 \;\;\; \text{for each } i \in \set{1,2}.
		$$
		Fix any vertex $u \in U$ such that $\deg_U(u) = 2$. Then
		$$
		\deg_{U \cup \set{v_0, v_1, v_2}} (u) = 2 + 3 = 5.
		$$
		Therefore, by Lemma~\ref{lemma:spoiling}, there exists a coloring $I$ with domain
		$$
			\dom(I) = \c{(U \cup \set{v_0, v_1, v_2})} \cup \set{v_1, v_2} = \c{(U \cup \set{v_0})}
		$$
		that enhances~$u$. By~\eqref{eq:phi},
		$$|L_I(v_0)| \geq \phi_U(v_0) = \deg_U(v_0) - \epsilon(v_0) \geq 3 - 1 = 2 > 0,$$
		so $I$ can be extended to a coloring $I'$ with $\dom(I') = \c{U}$. This contradicts Lemma~\ref{lemma:spoilt}\ref{item:spoilt:problem}.
		
		\ref{item:no_small_degrees:more_structure} If $U \subseteq D$ is the vertex set of a connected component of $G[D]$ with at least $4$ vertices of degree $2$ and $v_1$, $v_2 \in \c{D}$ are distinct nonadjacent vertices, then we have
		$$
		\phi_{U}(v_i)  = \deg_U(v_i) - \epsilon(v_i) \geq 4 - 1 = 3 \;\;\; \text{for each }i \in \set{1,2},
		$$
		so for every vertex $u \in U$ with $\deg_U(u) = 2$, we have
		$$
		\phi_U(v_1) + \phi_U(v_2) \geq 3 + 3 > 4 = \deg_U(u)  +2;
		$$
		a contradiction to Corollary~\ref{corl:spoiling}.
	\end{proof}
	
	\subsection{Terminal sets}
	
	We start this section by introducing some definitions and  notation that will be used throughout the rest of the proof.
	
	\begin{defn}
		A \emph{terminal set} is a subset $B \subseteq D$ such that $G[B]$ is a leaf block in a connected component of $G[D]$. For a terminal set $B$, $C_B \supseteq B$ denotes the vertex set of the connected component of $G[D]$ that contains $B$. A vertex $u \in D$ is \emph{terminal} if it belongs to some terminal set $B$ and is not a cut-vertex in $G[C_B]$.
	\end{defn}
	
	By definition, a terminal set contains at most one non-terminal vertex. Since $G[D]$ is a GDP-forest, if $B$ is a terminal set, then $G[B]$ is either a cycle or a clique. By Lemma~\ref{lemma:no_small_degrees}\ref{item:no_small_degrees:2}, the cardinality of a terminal set is at least $3$.
	
	\begin{defn}
		A terminal set $B$ is \emph{dense} if $G[B]$ is not a cycle; otherwise,  $B$ is \emph{sparse}.
	\end{defn}
	
	By definition, the cardinality of a dense terminal set is at least $4$.
	
	Our proof hinges on the following key fact:
	
	\begin{lemma}\label{lemma:not_all_sparse}
		There exists a dense terminal set.
	\end{lemma}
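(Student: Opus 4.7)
The plan is to argue by contradiction: suppose every terminal set is sparse, so every leaf block of $G[D]$ is a cycle of length at least $3$. Such a cycle has at least two non-cut vertices of $G[D]$-degree~$2$, so Lemma~\ref{lemma:no_small_degrees}\ref{item:no_small_degrees:bounds} gives $|\c{D}| = k-2$, $\epsilon(v) = 1$ for every $v \in \c{D}$, and every non-cut vertex of a sparse terminal set is adjacent to all of $\c{D}$. In particular, \eqref{eq:epsilon_bound} is tight: $\epsilon(V) = k-2$.

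The main lever is to fix a sparse terminal set $B$ (a cycle of length $\ell \geq 3$) together with a non-cut vertex $u \in B$, and to apply Corollary~\ref{corl:spoiling} to $u$ with a non-adjacent pair $v_1, v_2 \in \c{D}$, whenever such a pair exists. Since $\deg_B(u) = 2$ and every other non-cut vertex of $B$ is adjacent to both $v_i$, one has $\phi_B(v_i) = \deg_B(v_i) - 1 \geq \ell - 2$. Corollary~\ref{corl:spoiling} with $U = B$ then forces $2(\ell - 2) \leq 4$, so $\ell \leq 4$; running the same corollary with $U = C_B$ when $C_B = B$ improves this to $2(\ell - 1) \leq 4$, i.e.\ $\ell = 3$. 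Combining this bound with Corollary~\ref{corl:common_neighbor} (whose hypotheses are satisfied at any non-cut vertex of a cycle of length $\geq 4$, whose two cycle-neighbors are non-adjacent in $D$) and with the edge count from Corollary~\ref{corl:kGDP}, each remaining configuration admits either a coloring constructed via Lemma~\ref{lemma:spoiling} whose enhancement spreads to every component of $G[D \cap \c{(\dom(I))}]$, so that Lemma~\ref{lemma:spoilt}\ref{item:spoilt:problem} is violated by extending past $\c{D}$, or a direct application of Lemma~\ref{lemma:edge_extends} to complete an $\Cov{H}$-coloring of $G$, contradicting $\Cov{H}$-criticality.

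The genuinely harder case is $G[\c{D}] \cong K_{k-2}$ (which holds automatically when $k = 3$, since there $|\c{D}| = 1$): no non-adjacent pair of $\c{D}$-vertices exists, so Corollary~\ref{corl:spoiling} is unavailable. Here one computes $\deg_D(v) = 4$ for every $v \in \c{D}$ (since $\deg(v) = k+1$ and $v$ has $k-3$ neighbors in $\c{D}$), whence $|E_G(D, \c{D})| = 4(k-2)$. On the other hand, since no component of $G[D]$ is $K_k$ for $k \geq 4$ (else that component would be a dense terminal set), Corollary~\ref{corl:kGDP} gives $|E_G(U, \c{D})| \geq k + 1$ for each of the $t$ components $U$ of $G[D]$ when $k \geq 4$, yielding $(k+1)t \leq 4(k-2)$ and hence $t = 1$ when $k = 4$ and $t \leq 2$ when $k \geq 5$; the case $k = 3$ allows $K_3$-components and needs a separate small check. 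The remaining tiny configuration space is then dismantled by combining Corollary~\ref{corl:common_neighbor} with Lemma~\ref{lemma:edge_extends} to construct an $\Cov{H}$-coloring in each case.

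The principal obstacle is the clique case $G[\c{D}] \cong K_{k-2}$: the direct spoiling tool of Corollary~\ref{corl:spoiling} is absent, so one must juggle the very rigid edge-count $4(k-2)$, the disconnection constraints from Corollary~\ref{corl:common_neighbor}, and the matching-extension lemma (Lemma~\ref{lemma:edge_extends}), with the small cases $k = 3, 4$ requiring the most care.
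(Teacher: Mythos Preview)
Your overall strategy matches the paper's---assume all terminal sets are sparse, invoke Lemma~\ref{lemma:no_small_degrees} to pin down $|\c{D}|=k-2$ and $\epsilon\equiv 1$ on $\c{D}$, then split on whether $G[\c{D}]$ is a clique---but two of the steps you outline do not go through as written.

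First, in the clique case $G[\c{D}]\cong K_{k-2}$, your inequality $(k+1)t \le 4(k-2)$ is correct, but the conclusion ``$t\le 2$ when $k\ge 5$'' is false: at $k=11$ it gives $12t\le 36$, allowing $t=3$, and this persists for all $k\ge 11$. The paper uses a sharper observation you are missing: each $v\in\c{D}$ has exactly four neighbours in $D$, and by Lemma~\ref{lemma:no_small_degrees}\ref{item:no_small_degrees:bounds} is adjacent to \emph{every} degree-$2$ vertex of $G[D]$; hence $G[D]$ has at most four degree-$2$ vertices in total, and since every component contributes at least three, $G[D]$ is connected. Second, even once $t=1$ is settled, one of the two residual shapes is $G[D]\cong C_4$, and your toolkit does not eliminate it: Corollary~\ref{corl:common_neighbor} is vacuous there (removing two opposite vertices of a $4$-cycle \emph{does} disconnect it), and there is no evident non-perfect matching available to feed into Lemma~\ref{lemma:edge_extends}. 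The paper handles this subcase with a separate ingredient you do not mention, Proposition~\ref{prop:wheels} ($\chi_{DP}(W_4)=3$): colour $\c{(\{v\}\cup D)}$ first for some $v\in\c{D}$, leaving a $3$-fold cover of the wheel $G[\{v\}\cup D]\cong W_4$, which is then colourable. Your non-clique case is also only sketched; note that the paper gets ``every component of $G[D]$ is a triangle'' directly from Lemma~\ref{lemma:no_small_degrees}\ref{item:no_small_degrees:structure}\ref{item:no_small_degrees:more_structure}, whereas your Corollary~\ref{corl:spoiling} argument yields only $\ell\le 4$ for leaf blocks that are not entire components, and the subsequent contradiction requires a two-stage application of Lemma~\ref{lemma:spoiling} (enhancing vertices in two different triangles using two different clique components of $G[\c{D}]$), which is more than the single ``spreading'' step you indicate.
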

	
	Before proving Lemma~\ref{lemma:not_all_sparse}, we need the following simple observation:
	
	\begin{prop}\label{prop:wheels}
		Let $W_4$ denote the $4$-wheel. %, i.e., the graph obtained from a $4$-cycle by adding a single vertex adjacent to all the other vertices. 
		Then $\chi_{DP}(W_4) = 3$.
	\end{prop}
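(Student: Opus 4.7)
The lower bound $\chi_{DP}(W_4) \geq 3$ is immediate since $W_4$ contains a triangle and $\chi_{DP} \geq \chi \geq 3$. The substance lies in the upper bound: given any $3$-fold cover $\Cov{H} = (L, H)$ of $W_4$ with hub $h$ and rim $v_1, v_2, v_3, v_4$ (indices mod $4$), I must produce an $\Cov{H}$-coloring. Write $M_i \defeq E_H(L(h), L(v_i))$ and $M_{i,i+1} \defeq E_H(L(v_i), L(v_{i+1}))$. The plan is to color $h$ with some $x \in L(h)$ and extend to the rim, reducing to a $2$-DP-coloring problem on $C_4$ with residual lists $L'(v_i) \defeq L(v_i) \setminus N_H(x)$, and to peel off easy cases until the structure becomes rigid enough to admit an explicit solution.

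First, if some $M_i$ is not a perfect matching, I pick $x$ unmatched by $M_i$; then $|L'(v_i)| = 3$ while $|L'(v_j)| \geq 2$ for $j \neq i$, so a greedy walk in the order $v_{i+2}, v_{i+1}, v_{i-1}, v_i$ produces a coloring. Hence I may assume that every $M_i$ is a perfect matching, and I let $\sigma_i \colon L(h) \to L(v_i)$ denote the induced bijection; then $|L'(v_i)| = 2$ for every $x$. Next, if for some $x$ and some $i$ the restriction of $M_{i,i+1}$ to $L'(v_i) \times L'(v_{i+1})$ has fewer than $2$ edges, some $c \in L'(v_i)$ has no partner in $L'(v_{i+1})$; setting $v_i \defeq c$ leaves a path with residual list sizes at least $(2, 2, 1)$, colorable greedily from the shortest list. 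So I may assume every such restriction is a perfect matching of size $2$, for every $x \in L(h)$ and every $i$.

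The crux is then to show that this rigidity forces $M_{i,i+1} = \{\sigma_i(y)\sigma_{i+1}(y) \colon y \in L(h)\}$ for every $i$. I would sum the identity ``size of restriction equals $2$'' over the three choices of $x \in L(h)$ and use that $\sigma_i$, $\sigma_{i+1}$ are bijections, obtaining $|M_{i,i+1}| + S_i = 6$, where $S_i$ counts those $y \in L(h)$ with $\sigma_i(y)\sigma_{i+1}(y) \in M_{i,i+1}$. Since $|M_{i,i+1}| \leq 3$ and $S_i \leq 3$, each equals $3$, which establishes the claim.

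Finally, with $M_{i,i+1}$ identifying $\sigma_i(y)$ and $\sigma_{i+1}(y)$ for every $y$ and every $i$, I fix $a \in L(h)$, write $L(h) = \{a, b, c\}$, and set $I \defeq \{a, \sigma_1(b), \sigma_2(c), \sigma_3(b), \sigma_4(c)\}$. A hub-rim edge in some $M_i$ would force $a \in \{b, c\}$, and a rim edge in some $M_{i,i+1}$ would force the alternating labels $b, c$ on consecutive rim vertices to coincide; neither occurs, so $I$ is an $\Cov{H}$-coloring and $\chi_{DP}(W_4) \leq 3$. The hard part will be the rigidity argument of the third paragraph; the greedy reductions and the explicit construction are routine once the matching structure is pinned down.
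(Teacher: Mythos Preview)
Your argument is correct, but it follows a different path from the paper's. Both proofs begin identically: if some hub--rim matching $M_i$ is imperfect, a greedy extension works, so one may assume all $\sigma_i \colon L(h) \to L(v_i)$ are bijections. From there the paper proceeds by contradiction and exploits the ordinary $3$-colorability of $W_4$: it defines the projection $f \colon V(F) \to M(v)$ sending each color to its partner in the hub list, observes (via Theorem~\ref{theo:DP_Brooks} applied to the residual cover on $C_4$) that $f$ is well-defined, and then notes that a proper $3$-coloring of $W_4$ lifts through $f$ to an $\Cov{F}$-coloring unless some rim edge $x_1x_2 \in E(F)$ has $f(x_1) \neq f(x_2)$. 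Choosing $y \defeq f(x_2)$ for the hub leaves $x_1$ with no neighbor in $L'(u_2)$, and Theorem~\ref{theo:DP_Brooks} again finishes the rim. Your route instead pushes the case analysis one step further: after the perfect-$\sigma_i$ reduction you show directly (by the clean double count $|M_{i,i+1}| + S_i = 6$) that every rim matching must be the ``identity'' $\{\sigma_i(y)\sigma_{i+1}(y) : y \in L(h)\}$, and then exhibit the coloring $\{a,\sigma_1(b),\sigma_2(c),\sigma_3(b),\sigma_4(c)\}$ by hand. The paper's version is shorter and more conceptual, leaning on $\chi(W_4)=3$ and the DP-Brooks theorem as black boxes; yours is more elementary and fully self-contained (only greedy arguments on paths, no appeal to Theorem~\ref{theo:DP_Brooks}), at the price of a longer case analysis and the counting lemma. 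The rigidity step you flagged as the hard part is indeed where your proof does real work that the paper sidesteps via the $3$-colorability trick.
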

	\begin{proof}
		Let $\Cov{F} = (M, F)$ be a $3$-fold cover of $W_4$ and suppose that~$W_4$ is not $\Cov{F}$-colorable. Let $v \in V(W_4)$ denote the center of $W_4$ and let $U \defeq V(W_4)\setminus \set{v}$ (so $W_4[U]$ is a $4$-cycle). Define a function $f \colon V(F) \to M(v)$ by
		\[
		f(x) = y \,\vcentcolon\Longleftrightarrow\, (x = y) \text{ or } (x\not \in M(v) \text{ and }xy \in E(F)).
		\]
		Since $\deg_{W_4}(u) = 3$ for all $u \in U$, Theorem~\ref{theo:DP_Brooks} implies that $f$ is well-defined. Since $W_4$ is $3$-colorable (in the sense of ordinary graph coloring), there exist an edge $u_1u_2 \in E(W_4)$ and a pair of colors $x_1 \in M(u_1)$, $x_2 \in M(u_2)$ such that $x_1 x_2 \in E(F)$ and $f(x_1) \neq f(x_2)$. Note that $u_1 \neq v$ since otherwise $f(x_1) = x_1 = f(x_2)$ by definition. Similarly, $u_2 \neq v$, so $\set{u_1, u_2} \subset U$. Let $y \defeq f(x_2)$. Then $x_1$ has no neighbor in $M(u_2) \setminus N_{F}(y)$, and hence $\set{y}$ can be extended to an $\Cov{F}$-coloring of $W_4$; a contradiction.
	\end{proof}
	
	\begin{proof}[Proof of Lemma~\ref{lemma:not_all_sparse}]
		Suppose that every terminal set is sparse. Since every terminal set induces a cycle, each component of $G[D]$ contains at least $3$ vertices of degree~$2$, and a component of $G[D]$ with exactly~$3$ vertices of degree $2$ must be isomorphic to a triangle. By Lemma~\ref{lemma:no_small_degrees}\ref{item:no_small_degrees:bounds}, each vertex in $\c{D}$ is adjacent to every vertex of degree $2$ in $G[D]$, $|\c{D}| = k-2$, and $\epsilon(v) = 1$ for all $v \in \c{D}$. Furthermore, by Lemma~\ref{lemma:no_small_degrees}\ref{item:no_small_degrees:structure}\ref{item:no_small_degrees:more_structure}, $G[\c{D}]$ is a disjoint union of cliques and, unless every  component of $G[D]$ is isomorphic to a triangle, $G[\c{D}] \cong K_{k-2}$.
		
		\begin{claim}\label{claim:not_clique}
			$G[\c{D}] \not \cong K_{k-2}$.
		\end{claim}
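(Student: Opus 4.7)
The plan is to assume, for contradiction, that $G[\c{D}] \cong K_{k-2}$, derive strong structural constraints on $G[D]$, and then construct an $\Cov{H}$-coloring of $G$, contradicting $\Cov{H}$-criticality. The key positive tool is Proposition~\ref{prop:wheels}, which states that $\chi_{DP}(W_4) = 3$.

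I would first extract structural information. Since $\epsilon(v) = 1$ and $\deg_{\c{D}}(v) = k-3$ for each $v \in \c{D}$, we obtain $\deg_D(v) = 4$. Lemma~\ref{lemma:no_small_degrees}\ref{item:no_small_degrees:bounds} shows that these four neighbors include every degree-$2$ vertex of $G[D]$, so $G[D]$ has at most $4$ degree-$2$ vertices; Lemma~\ref{lemma:no_small_degrees}\ref{item:no_small_degrees:more_structure} then pins the count to be exactly~$4$. Since each component of $G[D]$ contains at least $3$ degree-$2$ vertices, $G[D]$ is connected. Any non-degree-$2$ vertex $w$ of $D$ therefore has $\deg_{\c{D}}(w) = 0$ and $\deg_D(w) = k$, and since $G$ is $K_{k+1}$-free such a $w$ must be a cut vertex of the GDP-tree $G[D]$.

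Next I would classify $G[D]$. Each leaf block is a cycle (by sparseness) contributing at least $2$ degree-$2$ vertices, so there are at most two leaf blocks. A short case check then yields only three possibilities: $G[D] \cong C_4$; the bowtie (two triangles sharing a cut vertex, forcing $k=4$); or two triangles joined by a single $K_2$ bridge (forcing $k=3$). The bridge case is disposed of immediately: writing $D = \set{u_1, u_2, c_1, u_3, u_4, c_2}$ with $\c{D} = \set{v}$, the partition $V_1 \defeq \set{u_1, u_2, c_1}$, $V_2 \defeq \set{u_3, u_4}$, $V_3 \defeq \set{v, c_2}$ satisfies all the defining properties of a $k$-Dirac graph, so $G \in \D_k$, contradicting the standing hypothesis $G \not\in \D_k$.

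For the main case $G[D] \cong C_4$, I would fix $v_0 \in \c{D}$ and set $S \defeq \set{v_0} \cup D$, so $G[S] \cong W_4$ with hub $v_0$. A greedy procedure colors $G[\c{D} \setminus \set{v_0}] \cong K_{k-3}$ (each vertex has list size $k$ and at most $k-4$ earlier neighbors), producing a coloring $I_0$. Every $u \in S$ has exactly $k-3$ neighbors in $\c{D} \setminus \set{v_0}$, so $|L_{I_0}(u)| \geq 3$. Restricting $\Cov{H}$ to $S$ and then to arbitrary $3$-element subsets of the $L_{I_0}(u)$ yields a $3$-fold cover of $W_4 = G[S]$; by Proposition~\ref{prop:wheels}, this cover admits a coloring, which combined with $I_0$ gives a full $\Cov{H}$-coloring of $G$, a contradiction. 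The bowtie case ($k=4$) is handled by the same general strategy after pre-coloring a single vertex of $\c{D}$; the residual $6$-vertex graph on $\set{v_0} \cup D$ is not itself $W_4$, but it is $3$-degenerate with chromatic number $3$, and a direct verification shows that it admits a DP-coloring under the remaining list sizes (all at least $3$, with the central cut vertex having list size $k=4$). The case analysis, particularly the bowtie subcase, is the main technical obstacle; once the structure of $G[D]$ is pinned down, the coloring step is a clean application of Proposition~\ref{prop:wheels}.
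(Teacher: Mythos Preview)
Your structural analysis and your treatment of the $C_4$ case and the bridge case ($k=3$) are essentially identical to the paper's. The gap is in the bowtie case ($k=4$).

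You write that after pre-coloring one vertex of $\c{D}$, the residual $6$-vertex graph on $\set{v_0}\cup D$ ``is $3$-degenerate with chromatic number $3$, and a direct verification shows that it admits a DP-coloring under the remaining list sizes.'' None of these facts does the work you need. Being $3$-degenerate with $\chi=3$ does not imply DP-colorability from lists of size $3$ (witness $C_4$ itself), and the residual graph is not covered by Theorem~\ref{theo:DP_Brooks}: the hub $v_0$ has degree $4$ but list size only $3$, so the cover is not a degree cover. Concretely, if you colour $v_0$ first, the remaining bowtie has lists $(2,2,4,2,2)$, which is exactly a degree cover of a GDP-tree and may fail; if you colour $c$ first, the remaining graph is again a bowtie (now centred at $v_0$) with hub-list of size $3$ against degree $4$. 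So the ``direct verification'' is a genuine DP-colouring problem that you have not solved, and it is not clear it can be solved without exploiting the freedom in the initial choice of colour for $v_1$ together with a non-trivial argument.

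The paper avoids this entirely: in the bowtie it picks the cut vertex $u=c$ and one leaf $u_1$, $u_2$ from each triangle, and invokes Corollary~\ref{corl:common_neighbor} to conclude that $G[D]-u_1-u_2$ is disconnected; but $G[D]-u_1-u_2$ is a path of length $2$, a contradiction in one line. You should replace your ``direct verification'' with this application of Corollary~\ref{corl:common_neighbor}. (Minor point: your appeal to Lemma~\ref{lemma:no_small_degrees}\ref{item:no_small_degrees:more_structure} to ``pin the count to exactly $4$'' cites the wrong direction of that lemma; the bound of at most $4$ already follows from $\deg_D(v)=4$, and ruling out the triangle comes from $|D|>3$, not from that lemma.)
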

		\begin{claimproof}
			Assume, towards a contradiction, that $G[\c{D}] \cong K_{k-2}$. Then every vertex in $\c{D}$ has exactly $(k+1) - (k-3) = 4$ neighbors in $D$. Therefore, the number of vertices of degree~$2$ in $G[D]$ is at most $4$. Since every  component of $G[D]$ contains at least $3$ vertices of degree $2$, the graph $G[D]$ is connected. Since $|D| \geq 4$, $G[D]$ is not a triangle. Thus, it contains precisely $4$ terminal vertices of degree $2$; i.e., it either is a $4$-cycle, or contains exactly two leaf blocks, both of which are triangles.
			
			{\sc Case~$1$}: \emph{$G[D]$ is a $4$-cycle}. We will show that in this case $G$ is $\Cov{H}$-colorable. Choose any vertex $v \in \c{D}$ and let $W \defeq G[\set{v}\cup D]$. Note that $W$ is a $4$-wheel. Fix an arbitrary coloring $I \in \I(H)$ with $\dom(I) = \c{(\set{v} \cup D)}$. For all $u \in \set{v} \cup D$, we have $|L_I(u)| \geq k - (k-3) = 3$, so by Proposition~\ref{prop:wheels}, $I$ can be extended to an $\Cov{H}$-coloring of the entire graph $G$.
			
			{\sc Case~$2$:} \emph{$G[D]$ contains exactly two leaf blocks, both of which are triangles}. Since each vertex in~$\c{D}$ has only $4$ neighbors in $D$, every non-terminal vertex in $D$ has degree $k$ in $G[D]$. Notice that every vertex of degree $k$ in $G[D]$ is a cut-vertex. Indeed, if a vertex $u \in D$ is not a cut-vertex in $G[D]$, then the degree of any cut-vertex in the same block as $u$ strictly exceeds the degree of $u$ (since the blocks of the   GDP-tree $G[D]$  are regular graphs). Thus, either the two terminal triangles share a cut-vertex (and, in particular, $k=4$), or else, their cut-vertices are joined by an edge (and $k=3$). The former option  contradicts Corollary~\ref{corl:common_neighbor}; the latter one implies~$G \in \D_3$.
		\end{claimproof}
		
		By Claim~\ref{claim:not_clique}, $G[\c{D}]$ is a disjoint union of at least $2$ cliques. In particular, every connected component of $G[D]$ is isomorphic to a triangle. Suppose that $G[D]$ has $\ell$ connected components (so $|D| = 3\ell$). If a vertex $v \in \c{D}$ belongs to a component of $G[\c{D}]$ of size $r$, then its degree in $G$ is precisely $(r-1) + 3\ell$. On the other hand, $\deg(v) = k+1$. Thus, $k+1 = (r-1) + 3\ell$, i.e., $r = k - 3\ell + 2$. In particular, $|\c{D}| = k-2$ is divisible by $k-3\ell + 2$, so $\ell \geq 2$.
		
		{\sc Case~1:} \emph{The set $\c{D}$ is not independent, i.e., $k-3\ell + 2 \geq 2$}. Let $T_1$, $T_2 \subset D$ (resp. $C_1$, $C_2 \subset \c{D}$) be the vertex sets of any two distinct connected components of $G[D]$ (resp. $G[\c{D}]$). For each $i \in \set{1,2}$, fix a vertex $u_i \in T_i$ and a pair of distinct vertices $v_{i1}$, $v_{i2} \in C_i$. Set $U \defeq T_1 \cup T_2 \cup \set{v_{11}, v_{12}, v_{21}, v_{22}}$ and let $I \in \I(H)$ be such that $\dom(I) = \c{U}$. Note that
		$$
		\phi_U(v_{11}) = \phi_U(v_{21}) = 7-1 = 6,
		$$
		while
		$$
		\deg_U(u_1) = 6,
		$$
		so, by Lemma~\ref{lemma:spoiling}, there exist $x_{11} \in L_I(v_{11})$ and $x_{21} \in L_I(v_{21})$ such that $$I' \defeq I \cup \set{x_{11}, x_{21}}$$ is a coloring that enhances $u_1$. Now, upon setting $U' \defeq U \setminus \set{v_{11}, v_{21}}$, we obtain
		$$
		\phi_{U'}(v_{12}) = \phi_{U'}(v_{22}) = 6-1 = 5,
		$$
		while
		$$
		\deg_{U'} (u_2) = 4,
		$$
		so, by Lemma~\ref{lemma:spoiling} again, we can choose $x_{12} \in L_{I'}(v_{12})$ and $x_{22} \in L_{I'}(v_{22})$ so that $$I'' \defeq I' \cup \set{x_{12}, x_{22}}$$ is a coloring that enhances both $u_1$ and $u_2$. However, the existence of such $I''$ contradicts Lemma~\ref{lemma:spoilt}\ref{item:spoilt:problem}.
		
		{\sc Case~2:} \emph{The set $\c{D}$ is independent, i.e., $k - 3\ell + 2 = 1$}. In other words, we have $k = 3\ell -1$. Since $\ell \geq 2$, we get $k \geq 6-1 = 5$, so $|\c{D}| = k-2 \geq 3$. Let $v_1$, $v_2$, $v_3 \in \c{D}$ be any three distinct vertices in $\c{D}$ and let $T \subset D$ be the vertex set of any connected component of $G[D]$. Fix a vertex $u \in T$, set $U \defeq T \cup \set{v_1, v_2, v_3}$, and let $I \in \I(H)$ be such that $\dom(I) = \c{U}$. Note that
		$$
		\phi_U(v_1) = \phi_U(v_2) = \phi_U(v_3) = 3-1 = 2,
		$$
		while
		$$
		\deg_U(u) = 5.
		$$
		Therefore, by Lemma~\ref{lemma:spoiling}, we can choose $x_1 \in L_I(v_1)$, $x_2 \in L_I(v_2)$, and $x_3\in L_I(v_3)$ so that $$I' \defeq I \cup \set{x_1, x_2, x_3}$$ enhances $u$. This observation contradicts Lemma~\ref{lemma:spoilt}\ref{item:spoilt:problem} and finishes the proof.
	\end{proof}
	
	\section{Dense terminal sets and their neighborhoods}
	
	\subsection{Outline of the proof}
	
	Lemma~\ref{lemma:not_all_sparse} asserts that at least one terminal set is dense. In this section we explore the structural consequences of this assertion and eventually arrive at a contradiction.
	
	\begin{defn}
		Let $B$ be a terminal set. Let $S_B$ denote the set of all vertices in $\c{B}$ that are adjacent to every vertex in $B$ and let $T_B \defeq N_G(B) \setminus (B \cup S_B)$.
	\end{defn}
	
	By definition, $S_B\subseteq \c{D}$; however, if $B \neq C_B$, then $T_B \cap D \neq \0$.
	
	The following statement will be used several times throughout the rest of the argument:
	
		\begin{lemma}\label{lemma:many_neighbors_outside}
			Let $B$ be a dense terminal set and let $v \in T_B$. Then $v$ has at least $k-1$ neighbors outside of $B$. If, moreover, there exist terminal vertices $u_0$, $u_1 \in B$ such that $u_0v \not\in E$, $u_1v \in E$, then $v$ has at least $k-1$ neighbors outside of $C_B$.
		\end{lemma}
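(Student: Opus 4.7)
The plan is to prove both parts by the same enhancement strategy: assuming toward a contradiction that $v$ has too many neighbors inside the relevant set, we use Lemma~\ref{lemma:spoiling} to produce a coloring enhancing a well-chosen terminal vertex $u$ of $B$, and then invoke Lemma~\ref{lemma:spoilt}\ref{item:spoilt:use} to extend that coloring to a proper $\Cov H$-coloring of $G$, contradicting $\Cov H$-criticality. The case $v \in D$ is dispatched first: since $B$ is a leaf block of the GDP-forest $G[D]$ and $v \notin B$, the only possible neighbor of $v$ in $B$ is the (non-terminal) cut-vertex of $B$ in $G[C_B]$, so $\deg_B(v) \leq 1 \leq \epsilon(v)+1$. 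This also shows that the hypothesis of the second claim (existence of a terminal $u_1 \in B$ adjacent to $v$) automatically forces $v \in \c D$, so throughout the main argument we may assume $v \in \c D$.

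For the first part, we assume $\deg_B(v) \geq \epsilon(v) + 2$, so in particular $\deg_B(v) \geq 3$, and write $b \defeq |B| \geq 4$. Pick a terminal vertex $u \in B$ adjacent to $v$ (which exists because $B$ has at most one non-terminal vertex) and a vertex $w \in B \setminus \{u\}$ nonadjacent to $v$ (which exists because $v \notin S_B$). Set $U \defeq B \cup \{v\}$ and $A \defeq \{w,v\}$: the key computation is $\phi_U(w) = b-1$, $\phi_U(v) = \deg_B(v) - \epsilon(v) \geq 2$, and $\deg_U(u) = b$, so $\phi_U(w) + \phi_U(v) \geq b+1 > \deg_U(u)$. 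Lemma~\ref{lemma:spoiling}, applied to any coloring $I$ with $\dom(I) = \c U$ (such $I$ exists by $\Cov H$-criticality), then yields a coloring $I'$ enhancing $u$ with $\dom(I') = V \setminus (B \setminus \{w\})$. Since $G[B \setminus \{w\}]$ is a connected clique containing $u$, Lemma~\ref{lemma:spoilt}\ref{item:spoilt:use} extends $I'$ to a full $\Cov H$-coloring of $G$, the desired contradiction.

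The second part uses the same template with $U \defeq C_B \cup \{v\}$, $u \defeq u_1$, and $A \defeq \{u_0, v\}$: independence of $A$ is precisely the hypothesis $u_0 v \notin E$, and the identity $\phi_U(u_0) = b-1$ uses that a non-cut terminal vertex of $B$ has all its $C_B$-neighbors inside $B$. The resulting coloring enhances $u_1$ and misses exactly $C_B \setminus \{u_0\}$, which remains connected because $u_0$ is non-cut in $G[C_B]$, so Lemma~\ref{lemma:spoilt}\ref{item:spoilt:use} again extends it to a full coloring of $G$. The only point requiring care in either part is the bookkeeping around $\phi_U$ and $\deg_U$; once one notices that the clique structure of $B$ always contributes precisely $\deg_U(u) - 1$ from the chosen witness vertex ($w$ or $u_0$), the assumed excess $\phi_U(v) \geq 2$ supplies the remaining slack, so no genuine obstacle is expected.
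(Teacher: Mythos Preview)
Your argument is correct, and it takes a genuinely different route from the paper's own proof. The paper does \emph{not} use Lemma~\ref{lemma:spoiling} here at all: instead, it fixes a coloring $I$ with $\dom(I)=\c{(U\cup\{v\})}$ (where $U=B$ or $U=C_B$), observes that $|L_I(v)|\ge 2$, picks two distinct colors $x_1,x_2\in L_I(v)$, and then invokes Lemma~\ref{lemma:edge_extends} to conclude that for each $i$ the matching $E_H(L_I(u_0),L_{I\cup\{x_i\}}(u_1))$ must be perfect. Since $u_0v\notin E$ the list $L_I(u_0)$ is unchanged by either $x_i$, so the unique color in $L_I(u_1)$ not matched into $L_I(u_0)$ would have to be adjacent to both $x_1$ and $x_2$, violating the matching condition in the cover.

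Your approach stays entirely within the enhancement framework (Lemmas~\ref{lemma:spoiling} and~\ref{lemma:spoilt}\ref{item:spoilt:use}) and never touches Lemma~\ref{lemma:edge_extends}; the trade-off is a bit more bookkeeping with $\phi_U$ and $\deg_U$, whereas the paper's matching trick is shorter but relies on an extra tool. Two small remarks: in the first part your insistence that $u$ be terminal is harmless but unnecessary (any neighbor of $v$ in $B$ works, since $G[B\setminus\{w\}]$ is a clique regardless); and the inequality you need, $\deg_B(v)\ge\epsilon(v)+2$, is exactly the negation of the conclusion since $\deg_{\c B}(v)=\deg(v)-\deg_B(v)=k+\epsilon(v)-\deg_B(v)$, so you might state that equivalence explicitly.
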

		\begin{proof}
			Let $u_0$, $u_1 \in B$ be such that $u_0v \not \in E$ and $u_1v \in E$. If one of $u_0$, $u_1$ is not terminal, then set $U \defeq B$; otherwise, set $U \defeq C_B$. Our goal is to show that $v$ has at least $k-1$ neighbors outside of $U$. Assume, towards a contradiction, that $\deg_{\c{U}}(v) \leq k-2$. Let $I \in \I(H)$ be such that $\dom(I) = \c{(U \cup \set{v})}$. By~\eqref{eq:phi}, we have $$|L_I(v)| \geq \phi_U(v) \geq k - (k-2) = 2,$$ so let $x_1$, $x_2$ be any two distinct elements of $L_I(v)$. Since $u_0 v \not\in E$, we have $$L_{I \cup \set{x_1}}(u_0) = L_{I\cup\set{x_2}}(u_0) = L_I(u_0),$$ so, by Lemma~\ref{lemma:edge_extends}, the matching $E_H(L_I(u_0), L_{I\cup\set{x_i}}(u_1))$ is perfect for each $i \in \set{1,2}$. This implies that the unique vertex in $L_I(u_1)$ that has no neighbor in $L_I(u_0)$ is adjacent to both $x_1$ and $x_2$, which is impossible.
		\end{proof}
	
	The rest of the proof of Theorem~\ref{theo:sharp_DP_Dirac} proceeds as follows. 
	Consider a dense terminal set~$B$. Roughly speaking, Lemma~\ref{lemma:many_neighbors_outside} asserts that the vertices in $T_B$ must have ``many'' neighbors outside of $B$. Since the degrees of the vertices in $\c{D}$ cannot be too big, the vertices in $T_B$ should only have ``very few'' neighbors in $B$. This implies that ``most'' edges between $B$ and $\c{D}$ actually connect $B$ with $S_B$. This intuition guides the proof of Corollary~\ref{corl:S}, which asserts that $G[B \cup S_B]$ is a clique of size $k$ (however, the proof of Lemma~\ref{lemma:S_is_a_clique}, the main step towards Corollary~\ref{corl:S}, is somewhat lengthy and technical).
	
	The fact that $G$ is a \emph{minimum} counterexample to Theorem~\ref{theo:sharp_DP_Dirac} is only used once during the course of the proof, namely in establishing Lemma~\ref{lemma:T}, which claims that for a dense terminal set~$B$, the graph $G[T_B]$ is a clique. The proof of Lemma~\ref{lemma:T} is also the only time when it is important to work in the more general setting of DP-colorings rather than just with list colorings. The proof proceeds by assuming, towards a contradiction, that there exist two nonadjacent vertices $v_1$, $v_2 \in T_B$, and letting $G^\ast$ be the graph obtained from $G$ by removing $B$ and adding an edge between $v_1$ and $v_2$. Since $G^\ast$ has fewer vertices than $G$, it cannot contain a counterexample to Theorem~\ref{theo:sharp_DP_Dirac} as a subgraph. This fact can be used to eventually arrive at a contradiction. En route to that goal we study the properties of a certain cover $\Cov{H}^\ast$ of~$G^\ast$, and that cover is not necessarily induced by a
	list assignment, even if $\Cov{H}$ is.
	
	With Lemma~\ref{lemma:T} at hand, we can pin down the structure of $G[S_B \cup T_B]$ very precisely, which is done in Lemmas~\ref{lemma:|T|_is_2} and~\ref{lemma:entire} and in Corollary~\ref{corl:D^c}. The restrictiveness of these results precludes having ``too many'' dense terminal sets; this is made precise by Lemma~\ref{lemma:exists_sparse}, which asserts that at least one terminal set is sparse. However, due to Lemma~\ref{lemma:no_small_degrees}, having a sparse terminal set leads to its own restrictions on the structure of $G[\c{D}]$, which finally yield a contradiction that finishes the proof of Theorem~\ref{theo:sharp_DP_Dirac}.
	
	\subsection{The set $S_B$ is large}\label{subsec:S_is_large}
	
	In this section we prove that for any dense terminal set $B$, $|S_B| \geq k - |B|$ (see Lemma~\ref{lemma:S}).
	
	\begin{lemma}\label{lemma:S_prime}
		Let $B$ be a dense terminal set. If $|S_B| \leq k - |B| - 1$, then the following statements hold:
		\begin{enumerate}[wide,label=\normalfont{(\roman*)}]
			\item\label{item:S:first}\label{item:S_prime:S} $|S_B| = k - |B| - 1$;
			\item \label{item:S_prime:union} $\c{D} = S_B \cup (T_B \cap \c{D})$;
			\item \label{item:S_prime:k-2} $|\c{D}| = |S_B| + |T_B \cap \c{D}| = k - 2$, and thus $\epsilon(v) = 1$ for every $v \in \c{D}$;
			\item \label{item:S_prime:T} every vertex in $T_B \cap \c{D}$ has exactly $k-1$ neighbors outside of $B$; and
			\item\label{item:S:last} \label{item:S_prime:cut} $B \neq C_B$, and the cut vertex $u_0 \in B$ of $G[C_B]$ has no neighbors in $T_B \cap \c{D}$.
		\end{enumerate}
	\end{lemma}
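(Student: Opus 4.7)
The plan is to double-count $|E_G(B, \c{D})|$, match the result against the global budgets $\epsilon(\c{D}) \leq k-2$ and $|\c{D}| \leq k-2$, and then read off all five conclusions by tracking equality in the resulting chain.

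First I would compute $|E_G(B, \c{D})|$ from the $B$-side. Since $G[B]$ is a clique (as $B$ is dense), each terminal $u \in B$ has exactly $k-|B|+1$ neighbors in $\c{D}$, and if $B \neq C_B$ the unique non-terminal vertex $u_0 \in B$ contributes $k - |B| + 1 - \alpha$ such edges, where $\alpha \defeq \deg_{C_B \setminus B}(u_0)$. Setting $\alpha \defeq 0$ when $B = C_B$, this gives the identity
\[
|E_G(B, \c{D})| = |B|(k - |B| + 1) - \alpha.
\]
From the $\c{D}$-side, vertices in $S_B$ each contribute $|B|$, vertices in $T_B \cap \c{D}$ contribute at most $\epsilon(v)+1$ each by Lemma~\ref{lemma:many_neighbors_outside}, and $\c{D}_0 \defeq \c{D} \setminus (S_B \cup T_B)$ contributes nothing. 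Using $\epsilon(v) \geq 1$ on $\c{D}$ together with the two budgets, one gets $\sum_{v \in T_B \cap \c{D}}(\epsilon(v)+1) \leq 2(k-2) - 2|S_B| - 2|\c{D}_0|$. Substituting, equating with the $B$-side identity, and invoking the algebraic identity $|B|(k-|B|+1) - (k-|B|-1)(|B|-2) = 2k-2$, I arrive at the master inequality
\[
\alpha \;\geq\; 2 + \bigl((k-|B|-1) - |S_B|\bigr)(|B|-2) + 2|\c{D}_0|. \qquad (\star)
\]
Already $\alpha \geq 2 > 0$, so $B \neq C_B$, which establishes the first half of (v).

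The finishing ingredient is an auxiliary upper bound on $\alpha$: because $u_0 \in B$, every vertex of $S_B$ is adjacent to $u_0$, so $S_B \subseteq N_G(u_0) \cap \c{D}$; hence $|S_B| \leq \deg_{\c{D}}(u_0) = k - |B| + 1 - \alpha$, i.e., $\alpha \leq k - |B| + 1 - |S_B|$. Writing $j \defeq (k-|B|-1) - |S_B| \geq 0$ and combining with $(\star)$ collapses the estimate to $j(|B|-3) + 2|\c{D}_0| \leq 0$. Since $|B| \geq 4$ by density, both summands must vanish, proving (i) and (ii) and forcing $\alpha = 2$. Tightness then cascades through the chain: $\epsilon(\c{D}) + |\c{D}| = 2(k-2)$ combined with $\epsilon \geq 1$ on $\c{D}$ yields (iii); equality in Lemma~\ref{lemma:many_neighbors_outside} for each $v \in T_B \cap \c{D}$ yields (iv); and $\deg_{\c{D}}(u_0) = k-|B|+1-2 = |S_B|$ together with $S_B \subseteq N_G(u_0) \cap \c{D}$ forces $N_G(u_0) \cap \c{D} = S_B$, completing (v).

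The main obstacle I expect is recognizing the auxiliary bound $\alpha \leq k-|B|+1-|S_B|$ coming from the inclusion $S_B \subseteq N_G(u_0)$: pure edge-counting (that is, $(\star)$ alone) only localises $|S_B|$ to an interval and for $|B|$ close to $4$ does not collapse that interval to the single point $k-|B|-1$.
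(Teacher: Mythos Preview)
Your proof is correct and follows the same edge-counting strategy as the paper: bound the edges between $B$ and $\c{D}$ via Lemma~\ref{lemma:many_neighbors_outside} and squeeze against the two budgets $\epsilon(\c{D})\leq k-2$ and $|\c{D}|\leq k-2$, then read off (i)--(v) from the resulting equality chain. The only organizational difference is how the cut vertex is handled. You compute $|E_G(B,\c{D})|$ \emph{exactly} from the $B$-side as $|B|(k-|B|+1)-\alpha$, which forces you to cap $\alpha$ afterwards via the auxiliary observation $S_B\subseteq N_G(u_0)$. The paper instead lower-bounds $|E_G(B,T_B\cap\c{D})|$ by summing only over the (at least $|B|-1$) \emph{terminal} vertices of $B$; this simply drops the cut vertex's contribution to zero from the outset, so $\alpha$ never appears and the obstacle you flag does not arise. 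Both routes collapse to the same equality chain.
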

	\begin{proof}
		Let $S \defeq S_B$ and let $T \defeq T_B\cap \c{D}$.
		Set $b \defeq |B|$, $s \defeq |S|$, and $t \defeq |T|$. Suppose that $s \leq k-b-1$. Since each terminal vertex in $B$ has exactly $k-(b-1)-s$ neighbors in $T$, the number of edges between $B$ and $T$ is at least $(b-1)(k-(b-1)-s)$. Also, by 
		Lemma~\ref{lemma:many_neighbors_outside}, each vertex in $T$ has at least $k-1$ neighbors in $\c{B}$. Hence,
		\begin{align*}
		\epsilon(\c{D}) & \geq \epsilon(S) + \epsilon(T) \\
		&\geq s + (b-1)(k-(b-1)-s) + (k-1)t - kt \\
		&= s + (b-1)(k-(b-1)-s) - t.
		\end{align*}
		Note that $s + t \leq |\c{D}| \leq k-2$, so $t \leq k-2-s$. Therefore,
		\begin{align*}
		s + (b-1)(k-(b-1)-s) - t \geq 2s + (b-1)(k-(b-1)-s) -k+2.
		\end{align*}
		Since $b \geq 4$, the last expression is decreasing in $s$, and hence
		\begin{align*}
		&2s + (b-1)(k-(b-1)-s) -k+2 \\
		\geq\, &2(k-b-1) + (b-1)(k - (b-1) - (k-b-1)) -k+2 \\
		=\, &k-2.	
		\end{align*}
		On the other hand, $\epsilon(\c{D}) \leq k-2$. This implies that none of the above inequalities can be strict, yielding~\ref{item:S:first}--\ref{item:S:last}.
	\end{proof}	
	
	\begin{lemma}\label{lemma:S1}
		Let $B$ be a dense terminal set. Suppose that $v_1$, $v_2 \in S_B$ are distinct vertices such that $v_1 v_2 \not \in E$. Then the following statements hold:
		\begin{enumerate}[wide,label=\normalfont{(\roman*)}]
			\item\label{item:S1:first}\label{item:S1:D^c} $|\c{D}| = k - |B| + 1$;
			\item \label{item:S1:v12} $\epsilon(v_1) + \epsilon(v_2) = |B|-1$; and
			\item\label{item:S1:last}\label{item:S1:epsilon} $\epsilon(v) = 1$ for every $v \in \c{D} \setminus \set{v_1, v_2}$.
		\end{enumerate}
	\end{lemma}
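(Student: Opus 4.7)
The plan is to extract a strong lower bound on $\epsilon(v_1)+\epsilon(v_2)$ from Corollary~\ref{corl:spoiling} and then close a counting loop against~\eqref{eq:epsilon_bound}. Set $b\defeq|B|\geq 4$ throughout.

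First I fix any terminal vertex $u\in B$; one exists since $B$ has at most one non-terminal vertex (the cut vertex of $G[C_B]$, if any) and $b\geq 4$. Every vertex of $S_B$ is adjacent to all of $B$ and $S_B\subseteq\c{D}$, so $v_1,v_2\in N_G(u)\cap\c{D}$, and Corollary~\ref{corl:spoiling} applies with this $u$ and with $U\defeq C_B$. Because $u$ is a non-cut vertex of $G[C_B]$ sitting in the clique block $B$, we have $\deg_{C_B}(u)=b-1$, whereas $\deg_{C_B}(v_i)\geq\deg_B(v_i)=b$, so $\phi_{C_B}(v_i)\geq b-\epsilon(v_i)$. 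The second alternative of the corollary then reads $2b-\epsilon(v_1)-\epsilon(v_2)\leq b+1$, while the first alternative forces $\epsilon(v_i)\geq\deg_{C_B}(v_i)\geq b$ for some $i$, so combined with $\epsilon(v_{3-i})\geq 1$ it gives the same (indeed stronger) conclusion
\[
\epsilon(v_1)+\epsilon(v_2)\;\geq\;b-1.
\]

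The counting now closes quickly. Since $\epsilon(v)\geq 1$ for each $v\in\c{D}$ and $\epsilon(\c{D})\leq k-2$ by~\eqref{eq:epsilon_bound},
\[
k-2\;\geq\;\epsilon(v_1)+\epsilon(v_2)+(|\c{D}|-2)\;\geq\;(b-1)+|\c{D}|-2,
\]
yielding the upper bound $|\c{D}|\leq k-b+1$. For the matching lower bound I first rule out $|S_B|\leq k-b-1$: in that case Lemma~\ref{lemma:S_prime} would give $\epsilon(v)=1$ for every $v\in\c{D}$, contradicting $\epsilon(v_1)+\epsilon(v_2)\geq b-1\geq 3$. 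So $|S_B|\geq k-b$. A terminal $u\in B$ satisfies $\deg_{\c{D}}(u)=k-b+1$ with $N_G(u)\cap\c{D}\subseteq S_B\cup(T_B\cap\c{D})$; if $|S_B|=k-b$, then pigeonhole forces at least one neighbor of $u$ in $T_B\cap\c{D}$, so that set is nonempty. Either way $|\c{D}|\geq|S_B\cup(T_B\cap\c{D})|\geq k-b+1$. Combined with the upper bound this gives~(i), and the whole chain collapses to equalities, yielding $\epsilon(v_1)+\epsilon(v_2)=b-1$ and $\epsilon(v)=1$ for all other $v\in\c{D}$, that is,~(ii) and~(iii).

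The heart of the proof is the application of Corollary~\ref{corl:spoiling} at a terminal vertex, where $\deg_{C_B}(u)=b-1$ is as small as possible while $\phi_{C_B}(v_1)+\phi_{C_B}(v_2)$ is forced to be at least $2b-\epsilon(v_1)-\epsilon(v_2)$; this mismatch is precisely what generates the bound $\epsilon(v_1)+\epsilon(v_2)\geq b-1$. Everything else is linear bookkeeping against~\eqref{eq:epsilon_bound}, with Lemma~\ref{lemma:S_prime} invoked only to exclude the anomalously small value of $|S_B|$.
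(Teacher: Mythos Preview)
Your argument is correct and follows essentially the same route as the paper: apply Corollary~\ref{corl:spoiling} at a terminal vertex of $B$ to force $\epsilon(v_1)+\epsilon(v_2)\geq b-1$, then close the chain of inequalities against $\epsilon(\c{D})\leq k-2$. The only real difference is that you take $U=C_B$ where the paper takes $U=B$; since the terminal vertex $u$ has $\deg_{C_B}(u)=\deg_B(u)=b-1$ and $\phi_{C_B}(v_i)\geq\phi_B(v_i)$, the two choices yield the same bound. One unnecessary detour: for the lower bound $|\c{D}|\geq k-b+1$ you invoke Lemma~\ref{lemma:S_prime} and a case split on $|S_B|$, but the line you yourself wrote, $\deg_{\c{D}}(u)=k-b+1$, already gives it directly --- the $k-b+1$ neighbours of $u$ in $\c{D}$ are $k-b+1$ distinct vertices of $\c{D}$.
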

	\begin{proof}
		Let $b \defeq |B|$. Each terminal vertex $u \in B$ has exactly $k-b+1$ neighbors in~$\c{D}$; in particular, $|\c{D}| \geq k-b+1$. By Corollary~\ref{corl:spoiling}, we have
		\begin{align*}
		\begin{array}{rcl}
		\text{either} & \min\set{\phi_B(v_1), \phi_B(v_2)} &\leq 0,\\
		\text{or} & \phi_B(v_1) + \phi_B(v_2) &\leq b+1.
		\end{array}
		\end{align*}
		In the case when $\phi_B(v_i) \leq 0$ for some $i \in \set{1,2}$, we have $\epsilon(v_i) = b - \phi_B(v_i) \geq b$, so
		\begin{equation}\label{ej32}
		\epsilon(v_1) + \epsilon(v_2) \geq b.
		\end{equation}
		In the other case, i.e., when $\phi_B(v_1) + \phi_B(v_2) \leq b+1$, we get
		$$
		\epsilon(v_1) + \epsilon(v_2) = (b - \phi_B(v_1)) + (b - \phi_B(v_2)) \geq b-1.
		$$
		Hence
		\begin{equation}\label{tr1}
		\epsilon(\c{D})\geq \epsilon(v_1) + \epsilon(v_2)+|\c{D}\setminus\{v_1,v_2\}| \geq (b-1) + (k-b-1) = k-2.
		\end{equation}
		Since $\epsilon(\c{D}) \leq k-2$,~\eqref{ej32} does not hold and none of the  inequalities in~\eqref{tr1} can be strict, yielding~\ref{item:S1:first}--\ref{item:S1:last}.
	\end{proof}
	
	\begin{lemma}\label{lemma:S}
		Let $B$ be a dense terminal set. Then $|S_B| \geq k - |B|$.
	\end{lemma}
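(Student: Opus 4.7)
The plan is to proceed by contradiction, assuming $|S_B| \leq k - b - 1$ where $b \defeq |B| \geq 4$. Lemma~\ref{lemma:S_prime} then forces equality $|S_B| = k - b - 1$ and delivers a rigid description of $\c{D}$: setting $T \defeq T_B \cap \c{D}$, we have $\c{D} = S_B \cup T$ (disjoint union), $|T| = b - 1$, $|\c{D}| = k - 2$, $\epsilon(v) = 1$ for all $v \in \c{D}$, $B \neq C_B$, the cut vertex $u_0 \in B$ of $G[C_B]$ has no neighbor in $T$, and every $w \in T$ has exactly $k - 1$ neighbors outside of $B$.

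The first reduction is a dichotomy on $S_B$. If $S_B$ contained two nonadjacent vertices, Lemma~\ref{lemma:S1} would give $|\c{D}| = k - b + 1$, which combined with $|\c{D}| = k - 2$ forces $b = 3$, contradicting $b \geq 4$. Hence $G[S_B]$ is a clique, and since $B$ is a clique (as $B$ is dense) with every vertex of $S_B$ adjacent to all of $B$, the set $B \cup S_B$ induces a $K_{k-1}$ in $G$. A degree count inside this $K_{k-1}$ shows that every terminal vertex of $B$ has exactly $2$ neighbors in $T$, and symmetrically every $w \in T$ has exactly $2$ neighbors in $B$, both of them terminal (since $u_0 w \notin E$); in particular, the edges between $T$ and the terminal vertices of $B$ form a $2$-regular bipartite graph on $(b-1) + (b-1)$ vertices.

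Next I would extract stronger information about vertices in $T$. Since $b \geq 4$, for each $w \in T$ there is a terminal vertex of $B$ nonadjacent to $w$, so the second part of Lemma~\ref{lemma:many_neighbors_outside} applies and forces all $k - 1$ non-$B$ neighbors of $w$ to lie outside $C_B$. In particular, $w$ has no neighbor in $C_B \setminus B$, and since $|\c{D} \setminus \set{w}| = k - 3 < k - 1$, at least $2$ of these non-$C_B$ neighbors of $w$ lie in $D \setminus C_B$. Consequently, $G[D]$ contains a component $C' \neq C_B$ with some vertex $u' \in C'$ adjacent to $w$.

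The main obstacle is converting this rigid structure into a contradiction. My plan is to apply Lemma~\ref{lemma:spoiling} twice to build a coloring $I$ with $\dom(I) \supseteq \c{D}$ that simultaneously enhances a vertex of $C_B$ (driven by the two $T$-neighbors of a terminal vertex of $B$, or by a $T$-vertex paired with an $S_B$-vertex should those two $T$-neighbors happen to be adjacent) and a vertex of $C'$ (driven by an edge from $T$ to $C'$), thereby violating Lemma~\ref{lemma:spoilt}\ref{item:spoilt:problem} on $U = C_B \cup C'$. Because every $v \in \c{D}$ satisfies $\epsilon(v) = 1$, the $\phi_U$-budgets are extremely tight, so the independent sets supplied to Lemma~\ref{lemma:spoiling} have to be chosen to squeeze extra slack out of $S_B$-neighbors; a subsidiary analysis of adjacencies inside $\c{D}$—in particular, whether or not $T$ is independent and how edges between $T$ and $S_B$ distribute—will be required to keep the relevant $\phi_U$-inequalities strict across the various sub-cases, and this balancing act is the delicate heart of the argument.
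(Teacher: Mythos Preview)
Your setup through the deduction that every $w \in T$ has all $k-1$ of its non-$B$ neighbors outside $C_B$ is correct and coincides with the paper. At that point, however, you take a turn that is both incomplete and faces a genuine obstruction, while the paper finishes by a short counting argument that uses none of the enhancement machinery.

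The paper does not pass to another component $C'$. Instead it exploits Lemma~\ref{lemma:S_prime}\ref{item:S_prime:cut}: since $B \neq C_B$, there is another terminal set $B' \subseteq C_B$. Because $T$ has no edges into $C_B \setminus B$, every terminal vertex $u \in B'$ draws all of its $\c{D}$-neighbors from $S_B$; this forces $B'$ to be dense and, counting, gives $k-b-1 = |S_B| \geq k - |B'| + 1$, i.e.\ $|B'| \geq b + 2 \geq 6$. Now pick $v \in S_B$ adjacent to such a $u$. Since $v$ has only three neighbors outside $B \cup S_B$ while $|B'| > 4$, some terminal vertex of $B'$ misses $v$, so Lemma~\ref{lemma:many_neighbors_outside} (applied with $B'$ in place of $B$) forces $v$ to have at least $k-1$ neighbors outside $C_B$. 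But $v$ has at most $(|S_B|-1) + 2 = k - b$ such neighbors, yielding $b \leq 1$. No colorings are constructed at all.

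Your proposed route---enhance a vertex of $C_B$ and a vertex of some $C' \neq C_B$ via Lemma~\ref{lemma:spoiling} and then invoke Lemma~\ref{lemma:spoilt}\ref{item:spoilt:problem} on $U = C_B \cup C'$---has a gap beyond the admitted ``balancing act''. To apply Lemma~\ref{lemma:spoilt}\ref{item:spoilt:problem} you need $I$ to enhance a vertex in \emph{every} component of $G[\c{(\dom(I))}\cap D]$. If you want $\c{(\dom(I))} = C_B \cup C'$, you must first color every other component of $G[D]$ after fixing colors on $\c{D}$; but each such component is a GDP-tree receiving only a degree cover from the residual lists, and Theorem~\ref{theo:DP_Brooks} gives no guarantee of colorability. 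Nothing in your outline bounds the number of such components or supplies the independent sets needed to enhance a vertex in each of them simultaneously. The paper's move to the second terminal set $B'$ \emph{inside} $C_B$ is precisely what avoids this difficulty.
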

	\begin{proof}
		Let $S \defeq S_B$ and let $T \defeq T_B\cap \c{D}$.
		Set $b \defeq |B|$, $s \defeq |S|$, and $t \defeq |T|$. Suppose that $s \leq k-b-1$. Then, by Lemma~\ref{lemma:S_prime}\ref{item:S_prime:S}, $s = k - b - 1$. We claim that $G[S]$ is a clique. Indeed, otherwise, by Lemma~\ref{lemma:S1}\ref{item:S1:D^c}, $|\c{D}| = k - b + 1$; on the other hand, by Lemma~\ref{lemma:S_prime}\ref{item:S_prime:k-2}, $|\c{D}| = k-2$, so we get $k-2 = k - b+1$, i.e., $b = 3$, which contradicts the fact that $B$ is dense.
		
		By Lemma~\ref{lemma:S_prime}\ref{item:S_prime:k-2}, the degree of every vertex in $\c{D}$ is exactly $k+1$. Since each vertex in $S$ has $b$ neighbors in $B$ and $s - 1 = k - b - 2$ neighbors in $S$, it has exactly $(k+1) - b - (k - b - 2) = 3$ neighbors in $\c{(B \cup S)}$.
		
		By Lemma~\ref{lemma:S_prime}\ref{item:S_prime:cut}, $B \neq C_B$. Let $u_0$ denote the cut vertex in $B$ and let $B'$ be any terminal subset of $C_B$ distinct from $B$. Set $b' \defeq |B'|$.
		
		By Lemma~\ref{lemma:S_prime}\ref{item:S_prime:union}, $t = |\c{D}| - s = (k - 2) - (k - b - 1) = b - 1 \geq 3$; in particular, $T \neq \0$. Due to Lemma~\ref{lemma:S_prime}\ref{item:S_prime:T}\ref{item:S_prime:cut}, every vertex in $T$ has exactly $k-1$ neighbors in $\c{B}$ and is not adjacent to $u_0$. Together with Lemma~\ref{lemma:S_prime}\ref{item:S_prime:k-2}, this implies that each vertex in $T$ has exactly $(k+1) - (k-1) = 2$ neighbors in $B \setminus \set{u_0}$. We have $|B \setminus \set{u_0}| = b-1 \geq 3$, so, by Lemma~\ref{lemma:many_neighbors_outside}, every vertex in $T$ has $k-1$ neighbors outside of $C_B$. Therefore, there are no edges between $T$ and $C_B \setminus B$; in particular, there are no edges connecting $T$ to the terminal vertices in $B'$.
		
		Consider any terminal vertex $u \in B'$. Since $T \neq \0$ and no edges connect $u$ and $T$, $\deg_{B'}(u) > 2$; therefore, $B'$ is a dense terminal set. By Lemma~\ref{lemma:S_prime}\ref{item:S_prime:union}, $\c{D} = S \cup T$, so $u$ has exactly $k - b' + 1$ neighbors in $S$. Thus, $k - b - 1 = s \geq k - b' + 1$, i.e., $b' \geq b + 2 \geq 6$. Let $v$ be any neighbor of $u$ in $S$. Since $v$ has only $3$ neighbors in $\c{(B \cup S)}$ and $b' > 4$, there exists another terminal vertex $u' \in B'$ such that $u'v \not \in E$. By Lemma~\ref{lemma:many_neighbors_outside}, $v$ has at least $k-1$ neighbors outside of $C_{B'} = C_B$. Of those, $s - 1$ belong to $S$; since $v$ has only $3$ neighbors outside of $B \cup S$ and is adjacent to $u$, it has at most $3 - 1 = 2$ neighbors in $\c{(C_B \cup S)}$. Hence, $k-1 \leq (s-1) + 2 = (k - b - 2) + 2 = k - b$, i.e., $b \leq 1$, which is impossible.
	\end{proof}
	
	\subsection{The graph $G[S_B]$}\label{subsec:S_is_a_clique}
	
	\begin{lemma}\label{lemma:S_is_a_clique}
		Let $B$ be a dense terminal set. Then $G[S_B]$ is a clique.
	\end{lemma}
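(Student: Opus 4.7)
My plan is to argue by contradiction: suppose $v_1, v_2 \in S_B$ are nonadjacent, and write $b \defeq |B| \geq 4$ and $s \defeq |S_B|$. Lemma~\ref{lemma:S1} pins down the excess budget tightly --- $|\c{D}| = k - b + 1$, $\epsilon(v_1) + \epsilon(v_2) = b - 1 \geq 3$, and $\epsilon(v) = 1$ for every $v \in \c{D} \setminus \set{v_1, v_2}$ --- and Lemma~\ref{lemma:S} restricts $s$ to $\set{k-b,\, k-b+1}$. The central difficulty is that the obvious enhancement attempt (Lemma~\ref{lemma:spoiling} with $u$ terminal in $B$, $A = \set{v_1, v_2}$, $U = B \cup \set{v_1, v_2}$) yields $\sum_{v \in A} \phi_U(v) = b+1 = \deg_U(u)$, one short of the required strict inequality; strict strictness must be extracted from additional structure in each of the two cases.

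In the case $s = k - b + 1$ one has $\c{D} = S_B$, $T_B = \0$, and $C_B = B$, so $B$ is an entire component of $G[D]$. Counting the $S_B$-to-$(D \setminus B)$ edges --- each $v_i$ sends at least $\epsilon(v_i) + 1$ such edges (since it has at most $s-2$ neighbors inside $S_B$) and each other $S_B$-vertex sends at least one --- produces at least $k$ such edges. Corollary~\ref{corl:kGDP} then restricts $G[D]$ to have at most one further component $C$; when $C$ exists it must satisfy $G[C] \cong K_k$ with each $c \in C$ having exactly one $S_B$-neighbor, $|N_G(v_i) \cap C| = \epsilon(v_i) + 1$, and $G[S_B] \cong K_s - v_1 v_2$. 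If $b = k - 1$ this configuration is a $k$-Dirac graph with $V_1 = C$, $V_2 = B$, $V_3 = S_B$, contradicting $G \notin \D_k$. If $b < k - 1$ I would choose $c \in N_G(v_1) \cap C$ and $c' \in C \setminus \set{c}$ whose unique $S_B$-neighbor avoids $\set{v_1, v_2}$ (such $c'$ exists because only $b + 1 < k = |C|$ vertices of $C$ have $S_B$-neighbors in $\set{v_1, v_2}$), apply Lemma~\ref{lemma:spoiling} twice to build a coloring that enhances a terminal $u \in B$ via $\set{v_1, v_2}$ and $c$ via the singleton $\set{c'}$, extend to cover $\c{D}$, and invoke Lemma~\ref{lemma:spoilt}\ref{item:spoilt:problem}.

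In the case $s = k - b$ a short elimination (any $T_B$-neighbor of a terminal vertex must lie in $\c{D}$) shows that the unique vertex $w \in \c{D} \setminus S_B$ lies in $T_B$. This forces $B \neq C_B$, $w$ adjacent to every terminal vertex of $B$ but not to $u_0$, and the cut-vertex $u_0$ has exactly one neighbor $u_0' \in D \setminus B$. I would enlarge $U$ to $B \cup \set{v_1, v_2, w} \cup (D \setminus B)$: each $v_i$ has at least $\epsilon(v_i) + 2$ neighbors in $\set{w} \cup (D \setminus B)$ (by counting its non-$S_B$ external neighbors), so $\sum_i \phi_U(v_i) \geq 2b + 4$, while $\deg_U(u) = b + 2$ for a terminal $u \in B$ (who gains only the single new neighbor $w$). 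Lemma~\ref{lemma:spoiling} then enhances $u$; after extending the coloring to cover $\c{D}$, Lemma~\ref{lemma:spoilt}\ref{item:spoilt:problem} produces the contradiction --- one must also verify, by parallel enhancements of the sort used in the large case (Corollary~\ref{corl:kGDP} and the tight $\epsilon$-budget restrict each additional component of $G[D]$ to be a $K_k$ amenable to singleton-$A$ enhancement), that every component of $G[D]$ hosts an enhanced vertex.
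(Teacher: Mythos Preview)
Your case split $s \in \set{k-b,\,k-b+1}$ is fine, but both branches have gaps, and the main one is a genuine structural error.

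\textbf{The case $s = k-b$.} You work hard here and leave the crucial step (``one must also verify \ldots'') unproven. But this case is vacuous: the paper kills it in two lines. With $w$ the unique vertex of $\c{D}\setminus S_B$, you correctly observe that $w\in T_B$ with $\deg_B(w)=b-1$. Now Lemma~\ref{lemma:many_neighbors_outside} gives $\deg_{\c{B}}(w)\ge k-1$, hence $\epsilon(w)\ge (b-1)+(k-1)-k=b-2\ge 2$, contradicting $\epsilon(w)=1$ from Lemma~\ref{lemma:S1}\ref{item:S1:epsilon}. So only $s=k-b+1$ survives.

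\textbf{The case $s = k-b+1$, $b<k-1$.} Your edge-count $|E_G(S_B, D\setminus B)|\ge k$ is correct, but you then assert that equality holds, forcing a single extra component $C\cong K_k$, each $c\in C$ with a unique $S_B$-neighbour, and $G[S_B]\cong K_s - v_1v_2$. None of this is justified, and in fact it is \emph{false}. The paper proves (Claims~\ref{claim:S-v1} and~\ref{claim:not_both}) that $\deg_{S_B}(v_1)=0$, so $G[S_B]$ is $K_{s-1}$ together with an isolated vertex $v_1$, not $K_s-v_1v_2$; consequently $\epsilon(v_1)=b-2$, $\epsilon(v_2)=1$, and $v_1$ alone sends $k-2$ edges into $D\setminus B$ while each other $S_B$-vertex sends $2$, giving $|E_G(S_B,D\setminus B)|=3k-2b-2>k$. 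Corollary~\ref{corl:kGDP} then no longer forces a unique $K_k$ component; indeed the paper shows (Claims~\ref{claim:Kk} and~\ref{claim:two}) that there are exactly \emph{two} further components, each a $K_k$. Your subsequent double-enhancement argument relies on the single-$K_k$ picture and on each $c\in C$ having a unique $S_B$-neighbour, so it collapses once the correct structure is in place.

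The paper's route is substantially longer: it first pins down $G[S_B]$ via two applications of the enhancement machinery (adding a third $S_B$-vertex $v$ adjacent to both $v_i$ pushes $\sum\phi_U(v_i)$ from $b+1$ to $b+3>b+2=\deg_U(u)$), then classifies all other terminal sets and counts edges to reach the final contradiction $2b\ge k=2b+2$. Your edge-counting shortcut is a reasonable instinct, but it needs the structure of $G[S_B]$ as input rather than output.
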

	\begin{proof}
		Let $S \defeq S_B$ and suppose that $G[S]$ is not a clique, i.e., there exist distinct $v_1$, $v_2\in S$ such that $v_1 v_2 \not \in E$. Without loss of generality, we may assume that $\deg(v_1) \geq \deg(v_2)$. 
		We will proceed via a series of claims, establishing a precise structure of $G[\c{D}]$, which will eventually lead to a contradiction. For the rest of the proof, we set $b \defeq |B|$ and $s \defeq |S|$.
		
		Recall that, by Lemma~\ref{lemma:S1}, we have the following:
		\begin{enumerate}[wide,label=\normalfont{(\roman*)}]
			\item $|\c{D}| = k - b + 1$;
			\item $\epsilon(v_1) + \epsilon(v_2) = b-1$; and
			\item $\epsilon(v) = 1$ for every $v \in \c{D} \setminus \set{v_1, v_2}$.
		\end{enumerate}
		
		\begin{claim}
			$\c{D} = S$ and $B = C_B$.
		\end{claim}
		\begin{claimproof}
			Suppose, towards a contradiction, that there is a vertex $v \in \c{D} \setminus S$. Since, by Lemma~\ref{lemma:S1}\ref{item:S1:D^c}, $|\c{D}| = k-b+1$, each terminal vertex in $B$ is adjacent to every vertex in~$\c{D}$. Therefore, $\deg_B(v) = b-1$ and, due to Lemma~\ref{lemma:many_neighbors_outside}, $\deg_{\c{B}}(v) \geq k-1$. Then
			$$\epsilon(v) = \deg(v) - k \geq (b-1) + (k-1) - k = b-2 > 1;$$
			a contradiction to Lemma~\ref{lemma:S1}\ref{item:S1:epsilon}.
			
			Since $|S| = |\c{D}| = k-b+1$, every vertex in $B$ has $(b-1) + (k-b+1) = k$ neighbors in $B \cup S$, so there are no edges between $B$ and $D \setminus B$; therefore, $B = C_B$.
		\end{claimproof}
		
		\begin{claim}\label{claim:deg2}
			The graph $G[D]$ has no vertices of degree $2$.
		\end{claim}
		\begin{claimproof}
			Indeed, otherwise Lemma~\ref{lemma:no_small_degrees} would yield $|\c{D}| = k-2$. Since $|\c{D}| = k-b+1$, this implies $b = 3$, contradicting the denseness of $B$.
		\end{claimproof}
		
		\begin{claim}\label{claim:at_least_3}
			$s \geq 3$, i.e., $b \leq k-2$.
		\end{claim}
		\begin{claimproof}
			Suppose, towards a contradiction, that $s = 2$, i.e., $S = \set{v_1, v_2}$. We will argue that in this case $G \in \D_k$. Since, by Lemma~\ref{lemma:S1}\ref{item:S1:D^c}, $s = k - b + 1$, we have $b = k-1$. In particular, since $b \geq 4$, we have $k \geq 5$. By Lemma~\ref{lemma:S1}\ref{item:S1:v12}, $\epsilon(S) = b-1 = k-2$, so there are exactly $(k-2) + 2k - 2(k-1) = k$ edges between $S$ and $D \setminus B$. Let $U$ be any connected component of $G[D]$ distinct from $B$. By Corollary~\ref{corl:kGDP}, the number of edges between $U$ and $S$ is at least~$k$, with equality only if $G[U] \cong K_k$; therefore, $D\setminus B = U$, and we indeed have $G[U] \cong K_k$. Then every vertex in $U$ has exactly one neighbor in $S$ and each vertex in $S$ has at least two neighbors in $U$ (for its degree is at least $k+1$), yielding $G \in \D_k$, as desired.
		\end{claimproof}
		
		\begin{claim}\label{claim:S-v1}
			$G[S \setminus \set{v_1}]$ is a clique.
		\end{claim}
		\begin{claimproof}
			Suppose that for some distinct $w_1$, $w_2 \in S \setminus \set{v_1}$, we have $w_1 w_2 \not \in E$. Applying Lemma~\ref{lemma:S1}\ref{item:S1:epsilon} with $w_1$ and $w_2$ in place of $v_1$ and $v_2$, we obtain $\epsilon(v_1) = 1$. Since, by our choice, $\deg(v_1) \geq \deg(v_2)$, and thus $\epsilon(v_1) \geq \epsilon(v_2)$, we get $\epsilon(v_2) = 1$ as well. But then $2 = \epsilon(v_1) + \epsilon(v_2) = b-1$, i.e., $b = 3$; a contradiction.
		\end{claimproof}
		
		\begin{claim}\label{claim:not_both}
			$\deg_S(v_1)=0$.
		\end{claim}
		\begin{claimproof}
			Suppose  that $v \in S \setminus \set{v_1, v_2}$ is adjacent to $v_1$. Note that by Claim~\ref{claim:S-v1}, $v$ is also adjacent to $v_2$. Let $U \defeq B \cup \set{v_1, v_2, v}$ and let $u$ be any vertex in $B$. Note that
			$$
			\deg_U(u) = (b-1) + 3 = b+2.
			$$
			On the other hand, since $\epsilon(v_1) + \epsilon(v_2) = b-1$, for each $i \in \set{1,2}$, we have $\epsilon(v_i) \leq b-2$, so
			$$
			\phi_U(v_i) = (b+1) - \epsilon(v_i) \geq (b+1) - (b-2) = 3 > 0;
			$$
			moreover,
			$$
			\phi_U(v_1) + \phi_U(v_2) = 2(b+1) - (b-1) = b+3 > b+2.
			$$
			Therefore, by Lemma~\ref{lemma:spoiling}, for any $I \in \I(H)$ with $\dom(I) = \c{U}$, we can find $x_1 \in L_I(v_1)$ and $x_2 \in L_I(v_2)$ such that $u$ is enhanced by $I' \defeq I \cup \set{x_1, x_2}$. Note that
			$$|L_{I'}(v)| \geq \phi_B(v) = b - 1 > 0,$$ so $I'$ can be extended to a coloring with domain $\c{B}$, which contradicts Lemma~\ref{lemma:spoilt}\ref{item:spoilt:problem}.
		\end{claimproof}
		
		\begin{claim}\label{claim:epsilon}
			$\epsilon(v_1) = b - 2$ and $\epsilon(v) = 1$ for all $v \in S \setminus \set{v_1}$.
		\end{claim}
		\begin{claimproof}
			Consider any $v \in S \setminus \set{v_1}$. By Claim~\ref{claim:at_least_3}, we can choose some $v' \in S \setminus \set{v_1, v}$. Due to Claim~\ref{claim:not_both}, $v_1v' \not \in E$, so we can apply Lemma~\ref{lemma:S1}\ref{item:S1:epsilon} with $v'$ in place of $v_2$ to obtain $\epsilon(v) = 1$. In particular, $\epsilon(v_2) = 1$, so $\epsilon(v_1) = (b-1) - \epsilon(v_2) = b-2$.
		\end{claimproof}
		
		\begin{claim}\label{claim:Kk}
			Every terminal set distinct from $B$ induces a clique of size $k$.
		\end{claim}
		\begin{claimproof}
			Suppose that $B'$ is a terminal set distinct from $B$ and $b'\defeq |B'| \leq k-1$. By  Claim~\ref{claim:deg2}, $B'$ is dense.
			Thus, by Lemma~\ref{lemma:S}, $|S_{B'}| \geq k - b'$, i.e., $S$ contains  at least $k-b'$ vertices  that are adjacent to every vertex in~$B'$. Consider $v \in S \setminus \set{v_1}$. By definition, $v$ has $b$ neighbors in~$B$; due to Claim~\ref{claim:S-v1}, $v$ also has $s - 2 = (k - b + 1) - 2 = k - b - 1$ neighbors in $S$. On the other hand, by Claim~\ref{claim:epsilon}, $\deg(v) = k+1$. Therefore, $$\deg_{D \setminus B}(v) = (k+1) - b - (k - b - 1) = 2.$$ In particular, $v$ cannot be adjacent to all the vertices in~$B'$. 
			Thus, $S_{B'} = \set{v_1}$ and $|B'| = k-1$. But
			$$\deg_{D\setminus B} (v_1) = \epsilon(v_1) + k - \deg_B(v_1) - \deg_S(v_1) =  (b-2) + k - b - 0 = k-2 < k-1;$$
			a contradiction.
		\end{claimproof}
		
		\begin{claim}\label{claim:two}
			There are exactly two terminal sets distinct from $B$.
		\end{claim}
		\begin{claimproof} Suppose $D \setminus B$ contains $\ell$ terminal sets.
			By Claim~\ref{claim:Kk}, the number of edges between $S$ and the terminal vertices of any terminal set
			$B'$ distinct from $B$ is at least $k-1$ and at most~$k$. On the other hand, the number of edges
			between $S$ and $D \setminus B$ is exactly $(k-2) + 2(k-b) = 3k - 2b - 2$.
			Therefore,  $$\ell(k-1)\leq 3k-2b-2 \leq \ell k,$$ so $1\leq \ell \leq 2$. However, if $\ell = 1$, then $3k-2b-2 \leq k$, so $b \geq k-1$, which contradicts Claim~\ref{claim:at_least_3}. Thus, $\ell = 2$, as desired.
		\end{claimproof}
		
		Now we are ready to finish the argument. Let $B_1$ and $B_2$ denote the only two terminal sets in $D\setminus B$, which, by Claim~\ref{claim:Kk}, induce cliques of size~$k$. We have $D\setminus B = C_{B_1} \cup C_{B_2}$. Notice that $v_1$ is adjacent to at least one terminal vertex in $B_1 \cup B_2$. Indeed, there are at least $2(k-1)$ edges between $S$ and the terminal vertices in $B_1 \cup B_2$, while each vertex in $S\setminus \set{v_1}$ has $2$ neighbors in $D \setminus B$, providing in total only $2(k-b)$ edges.
		
		Without loss of generality, assume that $v_1$ is adjacent to at least one terminal vertex in~$B_1$. Since $v_1$ has only $k-2$ neighbors in $D\setminus B$, Lemma~\ref{lemma:many_neighbors_outside} implies that
		$v_1$ has at least $k-1$ neighbors outside of $C_{B_1}$. Since $v_1$ has only $b \leq k-2$ neighbors outside of $C_{B_1} \cup C_{B_2}$, we see that $C_{B_1} \neq C_{B_2}$ and $v_1$ has a neighbor in $C_{B_2}$. Since $B_1$ and $B_2$ are the unique terminal sets in $C_{B_1}$ and $C_{B_2}$ respectively, we have $B_1 = C_{B_1}$ and $B_2 = C_{B_2}$. Therefore, $v_1$ is also adjacent to at least one terminal vertex in $B_2$ and, hence, has at least $k-1$ neighbors outside of $B_2$.
		
		Notice that $2k=|E_G(B_1 \cup B_2,S)|=3k-2b-2$, i.e., $k = 2b+2$. Let $d_i\defeq\deg_{B_i}(v_1)$. Then for each $i \in \set{1,2}$, $d_i\geq k-1-b$. Since $$b+d_1+d_2=\deg(v_1) = k+b-2,$$ we obtain that 
		$k+b-2\geq b+2(k-1-b)$, i.e., $2b\geq k$, contradicting $k=2b+2$.
	\end{proof}
	
	\begin{corl}\label{corl:S}
		Let $B$ be a dense terminal set. Then $G[B \cup S_B]$ is a clique of size $k$.
	\end{corl}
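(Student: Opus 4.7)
The plan is to assemble Corollary~\ref{corl:S} directly from the ingredients already developed in Subsections~\ref{subsec:S_is_large} and~\ref{subsec:S_is_a_clique}, together with the hypothesis that $G$ contains no clique of size $k+1$. The corollary is really a bookkeeping step that packages Lemmas~\ref{lemma:S} and~\ref{lemma:S_is_a_clique}.

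First I would observe that $G[B \cup S_B]$ is a clique. Since $B$ is a terminal set, $G[B]$ is a leaf block of the GDP-forest $G[D]$ and thus either a clique or a cycle; by the definition of dense, it is not a cycle, hence $G[B]$ is a clique. By Lemma~\ref{lemma:S_is_a_clique}, $G[S_B]$ is also a clique, and by the very definition of $S_B$ every vertex of $S_B$ is adjacent to every vertex of $B$. Combining these three facts, every pair of vertices in $B \cup S_B$ is joined by an edge, so $G[B \cup S_B]$ is a clique.

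Next I would pin down its size. From Lemma~\ref{lemma:S} we have $|S_B| \geq k - |B|$, so $|B \cup S_B| = |B| + |S_B| \geq k$. On the other hand, since $G$ contains no clique of size $k+1$, the clique $G[B \cup S_B]$ has at most $k$ vertices. Combining the two bounds gives $|B \cup S_B| = k$, and hence $G[B \cup S_B] \cong K_k$, as claimed.

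There is no genuine obstacle here: all the work has already been done in Lemmas~\ref{lemma:S} and~\ref{lemma:S_is_a_clique}, and Corollary~\ref{corl:S} is the clean consequence one uses to describe the structure of dense terminal sets going forward.
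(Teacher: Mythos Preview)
Your proof is correct and follows essentially the same approach as the paper: combine Lemma~\ref{lemma:S} for the lower bound $|B \cup S_B| \geq k$, Lemma~\ref{lemma:S_is_a_clique} (together with the definition of $S_B$ and the density of $B$) to see that $G[B \cup S_B]$ is a clique, and the standing hypothesis that $G$ has no $K_{k+1}$ for the upper bound. The only difference is that you spell out explicitly why $G[B \cup S_B]$ is a clique, whereas the paper compresses this into a single citation of Lemma~\ref{lemma:S_is_a_clique}.
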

	\begin{proof}
		By Lemma~\ref{lemma:S}, $|B\cup S_B| \geq k$; on the other hand, by Lemma~\ref{lemma:S_is_a_clique}, $G[B \cup S_B]$ is a clique, so $|B \cup S_B| \leq k$.
	\end{proof}
	
	\begin{corl}\label{corl:2}
		There does not exist a subset $U \subseteq V$ of size $k+1$ such that $G[U]$ is a complete graph minus an edge with the two nonadjacent vertices in $\c{D}$.
	\end{corl}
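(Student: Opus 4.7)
The plan is to assume for contradiction that such a $U$ exists, write $U = B' \cup \{v_1, v_2\}$ where $v_1, v_2 \in \c{D}$ are the two nonadjacent vertices and $B' \defeq U \setminus \{v_1, v_2\}$ is a clique of size $k-1$ whose every vertex is joined to both $v_1$ and $v_2$. The case $k = 3$ is immediate, since then $|\c{D}| \leq k - 2 = 1$ cannot accommodate both $v_1$ and $v_2$. For $k \geq 4$ I would set $A \defeq B' \cap D$; since $\{v_1,v_2\} \subseteq \c{D}$ and $|\c{D}| \leq k-2$, at most $k-4$ vertices of $B'$ can lie in $\c{D}$, so $|A| \geq 3$.

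The key structural observation is that each $u \in A$ is adjacent to all $k$ other vertices of $U$, which, together with $\deg(u) = k$, accounts for every neighbor of $u$; in particular $u$ has no $D$-neighbor outside $A$. Thus $A$ is a connected component of $G[D]$ inducing $K_{|A|}$, hence a terminal set with $C_A = A$. Moreover, every vertex of $\{v_1, v_2\} \cup (B' \setminus A)$ lies outside $A$ and is adjacent to all of $A$, so
\[
\{v_1, v_2\} \cup (B' \setminus A) \;\subseteq\; S_A, \qquad\text{which gives}\qquad |A \cup S_A| \;\geq\; |A| + 2 + (k-1-|A|) \;=\; k+1.
\]
If $|A| \geq 4$, then $G[A] = K_{|A|}$ is not a cycle, so $A$ is dense, and Corollary~\ref{corl:S} gives $|A \cup S_A| = k$, contradicting the above bound. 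If $|A| = 3$, the three vertices of $A$ have $\deg_D = 2$, so Lemma~\ref{lemma:no_small_degrees}\ref{item:no_small_degrees:bounds} yields $|\c{D}| = k-2$ and $\c{D} = \{v_1, v_2\} \cup (B' \setminus A)$. For $k \geq 5$, the set $B' \setminus A$ is nonempty and any $w$ in it is adjacent to both $v_1$ and $v_2$; but Lemma~\ref{lemma:no_small_degrees}\ref{item:no_small_degrees:structure} says $G[\c{D}]$ is a disjoint union of cliques, so $v_1, v_2, w$ would share a clique, contradicting $v_1v_2 \notin E$.

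The main obstacle is the residual case $k = 4$, $|A| = 3$, where $B' = A$ is a triangle and $\c{D} = \{v_1, v_2\}$: here each $v_i$ has degree $5$ with three neighbors in $A$, and its remaining two neighbors must therefore lie in $D \setminus A$, giving $|E_G(\c{D}, D \setminus A)| = 4 = k$. Corollary~\ref{corl:kGDP} then forces exactly one further connected component $C$ of $G[D]$, with $G[C] \cong K_4$. I would finish by taking $V_1 \defeq C$, $V_2 \defeq A$, $V_3 \defeq \{v_1, v_2\}$ and verifying each clause of the definition of $\D_4$ directly, using $v_1v_2 \notin E$ to preclude a $V_3$--$V_3$ edge and the fact that $A$ and $C$ are distinct components of $G[D]$ to preclude $V_1$--$V_2$ edges; this shows $G \in \D_4$, contradicting the standing hypothesis $G \notin \D_k$.
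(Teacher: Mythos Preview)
Your proof is correct and follows essentially the same route as the paper's: set $A \defeq U \cap D$ (the paper calls it $B$), observe that each vertex of $A$ already has $k$ neighbors inside $U$ so $A$ is an entire component of $G[D]$, then invoke Corollary~\ref{corl:S} when $|A|\geq 4$ and Lemma~\ref{lemma:no_small_degrees} when $|A|=3$ to reduce to $k=4$. The only noteworthy difference is in the final $k=4$ step: the paper appeals to Lemma~\ref{lemma:not_all_sparse} to locate a dense terminal set in $D\setminus A$ and then counts edges, whereas you bypass that lemma and use Corollary~\ref{corl:kGDP} directly to force $D\setminus A$ to be a single $K_4$ component. Your variant is slightly more economical, since Corollary~\ref{corl:kGDP} is elementary while Lemma~\ref{lemma:not_all_sparse} has a substantial proof; otherwise the two arguments are the same.
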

	\begin{proof}
		Suppose, towards a contradiction, that $U$ is such a set and let $v_1$, $v_2 \in U \cap \c{D}$ be the two nonadjacent vertices in $U$. Set $B \defeq U \cap D$. Note that $|B| \geq |U| - |\c{D}| \geq (k+1) - (k-2) = 3$.
		
		Since for each $u \in B$, $\deg_U(u) = k$, there are no edges between $B$ and $\c{U}$. In particular, $B= C_B$. If $|B| \geq 4$, then $B$ is a dense set and $U = B \cup S_B$, which is impossible due to Corollary~\ref{corl:S}. Therefore, $|B| = 3$. Thus, $|U \setminus B| = (k+1) - 3 = k-2$, so $\c{D} = U \setminus B$. By Lemma~\ref{lemma:no_small_degrees}\ref{item:no_small_degrees:structure}, $G[\c{D}]$ is a disjoint union of cliques. On the other hand, $G[\c{D}]$ is a complete graph minus the edge $v_1 v_2$. The only possibility then is that $|\c{D}| = 2$, i.e., $k = 4$. Each vertex in $\c{D}$ is of degree $5$ and, therefore, has exactly $2$ neighbors in $D \setminus B$. By Lemma~\ref{lemma:not_all_sparse}, there exists a dense terminal set $B' \subseteq D \setminus B$. Since $k = 4$, we must have $|B'| = 4$, so there are $4$ edges between $B'$ and $\c{D}$. This implies that $D \setminus B = B'$ and each vertex in $\c{D}$ has exactly $2$ neighbors in $B'$. But then $G\in \D_4$.
	\end{proof}
	
	\subsection{The graph $G[T_B]$}
	
	In this section we show that if $B$ is a dense terminal set, then $G[T_B]$ is a clique. However, in order for some of our arguments to go through, we need to establish some of the results for the more general case when $B$ is a terminal set such that $G[B \cup S_B]$ is a clique of size $k$ (i.e., $G[B]$ can also be isomorphic to a triangle).
	
	\begin{lemma}\label{lemma:small_deg}
		Let $B$ be a terminal set such that $G[B \cup S_B]$ is a clique of size $k$. Then every vertex in $S_B$ has at most $|B|-1$ neighbors outside of $B \cup S_B$.
	\end{lemma}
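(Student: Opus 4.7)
The plan is to argue by contradiction using the global $\epsilon$-budget. Set $b \defeq |B|$ and $s \defeq |S_B|$, so $b + s = k$, and suppose, towards a contradiction, that some $v \in S_B$ satisfies $\deg_{\c{(B \cup S_B)}}(v) \geq b$. Since $v$ is adjacent to every vertex of $B$ (by definition of $S_B$) and to every vertex of $S_B \setminus \set{v}$ (because $G[B \cup S_B]$ is a $k$-clique), we obtain $\deg(v) \geq (k-1) + b$, and hence $\epsilon(v) \geq b - 1$.

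Next, I would combine this lower bound with~\eqref{eq:epsilon_bound}. Every vertex of $\c{D}$ contributes at least $1$ to $\epsilon(\c{D})$, so $\epsilon(v) + (|\c{D}| - 1) \leq \epsilon(\c{D}) \leq k - 2$; this gives $|\c{D}| \leq k - b = s$. Because $S_B \subseteq \c{D}$ already has cardinality $s$, we conclude that $\c{D} = S_B$, and therefore $T_B = N_G(B) \setminus (B \cup S_B) \subseteq D \setminus B$.

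Finally, pick any terminal vertex $u \in B$. As $G[B \cup S_B]$ is a $k$-clique, $u$ has $b - 1$ neighbors in $B \setminus \set{u}$ and $s$ neighbors in $S_B$; this already accounts for $k - 1$ of its $k$ edges, so $u$ has exactly one further neighbor $w$, which must lie in $T_B \subseteq D \setminus B$. Since $w \in D$ is adjacent to $u \in C_B$, it must in fact lie in $C_B \setminus B$. But a terminal vertex is, by definition, not a cut-vertex of $G[C_B]$, so $u$ belongs to the unique block $B$ of $G[C_B]$ and has no neighbors in $C_B \setminus B$; this contradiction completes the argument.

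I do not expect any real obstacle here — the only mildly delicate step is recognizing that the $\epsilon$-budget is tight enough that assuming $\epsilon(v) \geq b-1$ leaves no room for any vertex of $\c{D}$ to lie outside $S_B$, after which the rigid clique structure of $G[B \cup S_B]$ inevitably pushes the ``extra'' neighbor of each terminal vertex into the forbidden region $C_B \setminus B$.
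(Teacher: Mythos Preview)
Your argument is correct and follows essentially the same approach as the paper: both hinge on the $\epsilon$-budget together with the observation that a terminal vertex of $B$ must have its one remaining neighbor in $\c{D}\setminus S_B$. The paper phrases this directly (deriving $d \leq |B| - |\c{D}\setminus S_B|$ and then noting $\c{D}\setminus S_B \neq \0$), whereas you reach the same contradiction by first forcing $\c{D} = S_B$ and then tracing where that extra neighbor would have to land; the content is the same.
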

	\begin{proof}
		Set $S \defeq S_B$. Let $v \in S$ and suppose that $v$ has $d$ neighbors outside of $B \cup S$. Then
		$$
		\epsilon(v) = \deg_{B \cup S}(v) + \deg_{\c{(B \cup S)}}(v) - k = (k-1) + d - k = d-1,
		$$
		so, using that $|S| = k - |B|$, we obtain
		$$
		k-2 \geq \epsilon(\c{D}) = \epsilon(S) + \epsilon(\c{D} \setminus S) \geq (d-1) + (k - |B| -1) + |\c{D}\setminus S|,
		$$
		i.e., $d \leq |B|-|\c{D}\setminus S|$. It remains to notice that $\c{D} \setminus S \neq \0$, since each terminal vertex in $B$ has a neighbor in $\c{D}\setminus S$.
	\end{proof}
	
	\begin{lemma}\label{lemma:structure_of_lists}
		Let $B$ be a terminal set such that $G[B \cup S_B]$ is a clique of size $k$. Let $I$ be a coloring with $\dom(I) = \c{(B \cup S_B)}$. Then for any $u \in B$, $|L_I(u)| = k-1$, and for any two distinct $u_1$, $u_2 \in B$, the matching $E_H(L_I(u_1), L_I(u_2))$ is perfect.
	\end{lemma}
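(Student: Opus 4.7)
The plan is to first extend $I$ to a coloring $I'$ with $\dom(I') = \c{B}$ by DP-coloring $G[S_B] = K_{|S_B|}$, and then to apply Theorem~\ref{theo:DP_Brooks} to $G[B] = K_{|B|}$ with the sub-cover $L_{I'}$. Since $G[B\cup S_B] = K_k$, every $u \in B$ has exactly $k-1$ neighbors in $B \cup S_B$ and a single neighbor outside, so~\eqref{eq:phi} already gives $|L_I(u)| \geq k-1$. For $v \in S_B$ the same bound yields $|L_I(v)| \geq k-1-\epsilon(v)$, while Lemma~\ref{lemma:small_deg} supplies $\epsilon(v) \leq |B|-1$, so $|L_I(v)| \geq |S_B|$.

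The key technical step, which I expect to be the main obstacle, is the following parametric extension claim: for any $u_0 \in B$ and any color $c \in L(u_0)$, $I$ can be extended to a coloring $I'$ with $\dom(I') = \c{B}$ that in addition assigns each $v \in S_B$ a color distinct from the (unique, if it exists) color in $L(v)$ matched to $c$ via $E_H$. This reduces to DP-coloring $K_{|S_B|} = G[S_B]$ with the sub-cover $\tilde{L}_I$ obtained from $L_I$ by removing these at most $|S_B|$ forbidden colors; the bound $|\tilde{L}_I(v)| \geq |L_I(v)| - 1 \geq |S_B|-1$ makes $\tilde{L}_I$ a degree cover of $K_{|S_B|}$. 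If $K_{|S_B|}$ were not $\tilde{L}_I$-colorable, Theorem~\ref{theo:DP_Brooks} would force $|\tilde{L}_I(v)| = |S_B|-1$, and hence $\epsilon(v) = |B|-1$, for every $v \in S_B$. Summing over $S_B$ and using~\eqref{eq:epsilon_bound} would then give $|S_B|(|B|-1) \leq \epsilon(\c{D}) \leq k-2 = |B|+|S_B|-2$, which after rearrangement forces $|S_B| \leq 1$ (using $|B| \geq 3$); and in the residual case $|S_B|=1$ the unique $v \in S_B$ would satisfy $\deg(v) = k + |B| - 1$ and hence have $k-1$ neighbors outside $B \cup S_B$, directly contradicting Lemma~\ref{lemma:small_deg}.

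With the extension claim established, I first take any $I'$ extending $I$ with $\dom(I') = \c{B}$; the sub-cover on $G[B] = K_{|B|}$ is then a degree cover, $G[B]$ is not $L_{I'}$-colorable by $\Cov{H}$-criticality, and Theorem~\ref{theo:DP_Brooks} yields $|L_{I'}(u)| = |B|-1$ for every $u \in B$ together with perfect matchings $E_H(L_{I'}(u_1), L_{I'}(u_2))$ for all $u_1, u_2 \in B$. The bound $|L_{I'}(u)| \geq |L_I(u)| - |S_B|$ will then force $|L_I(u)| \leq k-1$, which together with $|L_I(u)| \geq k-1$ yields the desired equality. For the matching conclusion, for each $c \in L_I(u_1)$ I apply the extension claim with this $c$ to place $c$ inside $L_{I'}(u_1)$; the perfect matching of $L_{I'}$ then pairs $c$ with a color in $L_{I'}(u_2) \subseteq L_I(u_2)$, and since this holds for every $c \in L_I(u_1)$, the matching $E_H(L_I(u_1), L_I(u_2))$ saturates $L_I(u_1)$ and is thus perfect.
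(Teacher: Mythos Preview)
Your proof is correct and follows essentially the same strategy as the paper's: extend $I$ over $S_B$ while keeping a prescribed color at a chosen vertex of $B$ alive, then exploit the degree-cover structure on $G[B]=K_{|B|}$. The paper phrases the two conclusions via Lemma~\ref{lemma:spoilt}\ref{item:spoilt:problem} and Lemma~\ref{lemma:edge_extends} (arguing by contradiction from a missing matching edge) rather than invoking Theorem~\ref{theo:DP_Brooks} directly, but the underlying mechanism is the same.

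One unnecessary detour: Lemma~\ref{lemma:small_deg} actually yields $\deg_{\c{(B\cup S_B)}}(v)\leq |B|-1$, hence $\epsilon(v)\leq |B|-2$ (not merely $|B|-1$) for every $v\in S_B$. This gives $|L_I(v)|\geq |S_B|+1$ and therefore $|\tilde L_I(v)|\geq |S_B|$, so $K_{|S_B|}$ is $\tilde L_I$-colorable by a straightforward greedy argument. Your entire case analysis invoking Theorem~\ref{theo:DP_Brooks} on $G[S_B]$ (and the summation forcing $|S_B|\leq 1$) can be dropped; this is exactly how the paper handles the extension step.
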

	\begin{proof}
		Set $S \defeq S_B$. By Lemma~\ref{lemma:small_deg}, $|L_I(v)| \geq k-|B|+1$ for all $v \in S$. Since $|S| = k - |B|$, $I$ can be extended to a coloring $I'$ with $\dom(I') = \c{B}$. Therefore, due to Lemma~\ref{lemma:spoilt}\ref{item:spoilt:problem} and since $G[B]$ is connected, $I$ does not enhance any $u \in B$, i.e., $|L_I(u)| = k-1$, as claimed. Now, let $u_1$, $u_2$ be two distinct vertices in~$B$ and suppose, towards a contradiction, that $x \in L_I(u_1)$ has no neighbor in $L_I(u_2)$. For each $v \in S$, let $L'(v) \defeq L_I(v) \setminus N_H(x)$. Then $|L'(v)| \geq k-|B| = |S|$ for all $v \in S$, so there is $I' \in \I(H)$ with $\dom(I') = S$ such that $I' \subseteq \bigcup_{v \in S} L'(v)$. Then $I \cup I'$ is a coloring with domain $\c{B}$; moreover, $x \in L_{I \cup I'}(u_1)$, which implies that the matching $E_H(L_{I \cup I'}(u_1), L_{I \cup I'}(u_2))$ is not perfect. Due to Lemma~\ref{lemma:edge_extends}, $I \cup I'$ can be extended to an $\Cov{H}$-coloring of $G$; a contradiction. 
	\end{proof}
	
	\begin{lemma}\label{lemma:T}
		Let $B$ be a terminal set such that $G[B \cup S_B]$ is a clique of size $k$. Then $G[T_B]$ is a clique of size at least $2$.
	\end{lemma}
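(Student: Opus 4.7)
First, I would establish $|T_B| \geq 2$ by a simple degree count. Since $G[B \cup S_B]$ is a clique of size $k$, each terminal vertex $u \in B$ has $|B|-1$ neighbors in $B$ and $|S_B| = k - |B|$ neighbors in $S_B$, hence exactly one neighbor in $T_B$. If $B = C_B$, all of $B$ is terminal, and were $|T_B| = 1$ the unique vertex in $T_B$ would be adjacent to every vertex of $B$ and so lie in $S_B$---a contradiction. If $B \neq C_B$, the unique cut vertex $u_0 \in B$ of $G[C_B]$ has a neighbor $w \in C_B \setminus B \subseteq D \setminus B$, which lies in $T_B$; since a terminal vertex has all its $D$-neighbors inside $B$, $w$ cannot equal the $T_B$-neighbor of any terminal vertex, so $|T_B| \geq 2$.

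For the cliquehood of $G[T_B]$ I would argue by contradiction, exploiting the minimality of $G$. Assume $v_1, v_2 \in T_B$ are distinct and non-adjacent in $G$, and set
\[
G^\ast \defeq (G - B) + v_1 v_2,
\]
which has $n - |B| < n$ vertices. I construct a $k$-fold cover $\Cov{H}^\ast = (L\vert_{\c{B}}, H^\ast)$ of $G^\ast$ as follows: $H^\ast$ is the subgraph of $H$ induced on $\bigcup_{u \in \c{B}} L(u)$, augmented by a matching $M$ between $L(v_1)$ and $L(v_2)$. The matching $M$ is defined so that $xy \in M$ precisely when no coloring $I$ of $G$ with $\dom(I) = \c{B}$ containing both $x$ and $y$ extends to an $\Cov{H}$-coloring of $G$. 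Lemma~\ref{lemma:structure_of_lists}, together with the freedom to vary the choices on $S_B$, is what shows this blocking relation is functional in each coordinate (hence a bona fide matching) and, simultaneously, that $\Cov{H}^\ast$-colorings of $G^\ast$ correspond bijectively to colorings of $G - B$ that extend to $\Cov{H}$-colorings of $G$. Since $G$ is $\Cov{H}$-critical no such extendable coloring exists, so $G^\ast$ is not $\Cov{H}^\ast$-colorable.

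Pass to an $\Cov{H}^\ast$-critical subgraph $G' \subseteq G^\ast$. By the minimality of $G$, the graph $G'$ satisfies the conclusion of Theorem~\ref{theo:sharp_DP_Dirac}: either $G'$ contains $K_{k+1}$, or $G' \in \D_k$, or $2|E(G')| > k|V(G')| + (k-2)$. A $K_{k+1}$ in $G'$ avoiding $v_1 v_2$ is already present in $G$, contradicting our hypothesis. A $K_{k+1}$ using $v_1 v_2$ produces a $K_{k+1}$-minus-an-edge inside $G$; a short analysis---noting that $V(G') \subseteq V \setminus B$ and that $v_1, v_2$ have very few neighbors in $B$---forces the two non-adjacent vertices to lie in $\c{D}$, contradicting Corollary~\ref{corl:2}. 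If $G' \in \D_k$, the $\D_k$-partition of $G'$ fits together with the attached clique $B \cup S_B$ to realize $G \in \D_k$, contrary to our standing assumption. Finally, the strict inequality for $G'$, combined with the precise accounting of the edges of $G$ incident to $B$ versus the single added edge $v_1 v_2$, propagates to $2m > kn + (k-2)$, contradicting~\eqref{eq:main_inequality}.

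The main obstacle is the construction and correctness of $M$: proving that ``\,$\{x,y\}$ blocks every extension'' actually defines a matching rather than an arbitrary bipartite graph, and that the correspondence between $\Cov{H}^\ast$-colorings and extendable partial colorings of $G$ is exact. This hinges on Lemma~\ref{lemma:structure_of_lists}, which pins the residual lists over $B$ to size $k-1$ with perfect matchings between them; combined with the freedom over $S_B$ and the sparse $B$-adjacencies of $v_1$ and $v_2$, fixing $x$ must then leave at most one fatal $y$. A secondary technical point is the $\D_k$ case, in which the $\D_k$-partition of $G'$ has to interact consistently with the clique $B \cup S_B$ attached at the vertices of $T_B$ to force $G \in \D_k$.
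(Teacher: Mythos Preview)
Your overall architecture matches the paper's: add the phantom edge $v_1v_2$, build a $k$-fold cover with a new matching $M$ between $L(v_1)$ and $L(v_2)$, and invoke the minimality of $G$ on a critical piece of the resulting graph. However, the heart of the argument---your definition of $M$---is broken as stated. You set $xy\in M$ exactly when no coloring $I$ with $\dom(I)=\c{B}$ containing $x$ and $y$ extends to an $\Cov{H}$-coloring of $G$. But $G$ is not $\Cov{H}$-colorable, so \emph{no} coloring with domain $\c{B}$ extends to all of $G$; your condition is therefore satisfied by every pair $(x,y)\in L(v_1)\times L(v_2)$, and $M$ is the complete bipartite graph rather than a matching. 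No appeal to Lemma~\ref{lemma:structure_of_lists} or to ``freedom over $S_B$'' can repair this, because the obstruction you are encoding is the global non-colorability of $G$, which is insensitive to the particular choice of $x$ and $y$. The paper's construction is not an extendability criterion at all: it fixes specific neighbors $u_1,u_2\in B$ of $v_1,v_2$, completes (if necessary) the matching $E_H(L(u_1),L(u_2))$ to a perfect matching in an auxiliary graph $H'$, and declares $x_1x_2\in M$ precisely when there is a path $x_1y_1y_2x_2$ in $H'$ with $y_i\in L(u_i)$. This is manifestly a matching (a composition of three matchings), and Lemma~\ref{lemma:structure_of_lists} is then used to show that any $I\in\I(H^\ast)$ with $\dom(I)=\c{(B\cup S_B)}$ would force the missing colors $y_1,y_2$ to line up along such a path, contradicting $x_1x_2\notin M$.

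Your case analysis is also too thin to close the argument. In the $K_{k+1}$ case you assert that the two nonadjacent vertices of the $K_{k+1}$-minus-an-edge must lie in $\c{D}$ and then cite Corollary~\ref{corl:2}; but a priori one of $v_1,v_2$ can lie in $D$ (indeed $T_B$ may meet $D$ when $B\neq C_B$), and Corollary~\ref{corl:2} does not apply directly. The paper handles this by observing that the $(k-1)$-set $U$ together with $v_2\in D$ produces a second terminal set $B^\ast$ with $G[B^\ast\cup S_{B^\ast}]\cong K_k$ and $\{v_1,u_2\}\subseteq T_{B^\ast}$ nonadjacent, then reruns the entire argument with $B^\ast$ in place of $B$; only after this symmetry step does one obtain $\epsilon(v_1)\geq k-2$ and hence $G\in\D_k$. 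Likewise, in the $\D_k$ case the paper does not ``glue'' the $\D_k$-structure to $B\cup S_B$ to conclude $G\in\D_k$; rather, the equalities forced by $G^\ast[W]\in\D_k$ pin down $S_B=\emptyset$, $|B|=k$, $T_B\cap\c{D}\subseteq\{v_1,v_2\}$, and then a parity/adjacency count on the terminal vertices of $B$ yields a direct contradiction (ultimately forcing $k=3$ with an impossible adjacency pattern). Both branches require real work beyond what you have sketched.
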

	\begin{proof}
		Set $S \defeq S_B$ and $T \defeq T_B$. First, observe that $|T| \geq 2$: Each vertex in $B$ has a (unique) neighbor in $T$; thus, if $|T| = 1$, then the only vertex in $T$ has to be adjacent to all the vertices in $B$, which contradicts the way $T$ is defined.
		
		Now suppose that $v_1$, $v_2 \in T$ are two distinct nonadjacent vertices. For each $i \in \set{1, 2}$, choose a neighbor $u_i \in B$ of $v_i$.  Since every vertex in $B$ has only one neighbor outside of $B \cup S$, $u_1 v_2$, $u_2 v_1 \not \in E$. Note that, by Lemma~\ref{lemma:structure_of_lists}, there are at least $k-1$ edges between $L(u_1)$ and $L(u_2)$. Let $H'$ be the graph obtained from $H$ by adding, if necessary, a single edge between $L(u_1)$ and $L(u_2)$ that completes a perfect matching between those two sets. Let $H^\ast$ be the graph obtained from $H$ by adding a matching $M$ between $L(v_1)$ and $L(v_2)$ in which $x_1 \in L(v_1)$ is adjacent to $x_2 \in L(v_2)$ if and only if there exist $y_1 \in L(u_1)$, $y_2 \in L(u_2)$ such that $x_1 y_1 y_2 x_2$ is a path in $H'$. Observe that $\Cov{H}^\ast \defeq (L,H^\ast)$ is a cover of the graph $G^\ast$ obtained from $G$ by adding the edge $v_1v_2$.
		
		\begin{claim}
			There is  no independent set $I \in \I(H^\ast)$ with $\dom(I) = \c{(B \cup S)}$.
		\end{claim}
		\begin{claimproof}
			Assume, towards a contradiction, that $I \in \I(H^\ast)$ is such that $\dom(I) = \c{(B \cup S)}$. Since, in particular, $I \in \I(H)$, Lemma~\ref{lemma:structure_of_lists} guarantees that the edges of $H$ between $L_I(u_1)$ and $L_I(u_2)$ form a perfect matching of size $k-1$. For each $i \in \set{1, 2}$, let $y_i$ be the unique element of $L(u_i) \setminus L_I(u_i)$. Then $y_1y_2$ is an edge in $H'$. However, since $y_i \not \in L_I(u_i)$, the unique element of $I \cap L(v_i)$, which we denote by $x_i$, is adjacent to $y_i$ in $H$. Therefore, $x_1 y_1 y_2 x_2$ is a path in $H'$, so $x_1x_2$ is an edge in $H^\ast$. This contradicts the independence of $I$ in $H^\ast$.
		\end{claimproof}
		
		Let $W \subseteq \c{(B \cup S)}$ be an inclusion-minimal subset for which there is no $I \in \I(H^\ast)$ with $\dom(I) = W$. 
		Since $G$ is $\Cov{H}$-critical, $G^\ast[W]$ is not a subgraph of $G$, so $\set{v_1, v_2} \subseteq W$. Since for all $v \in W$, $\deg(v) \geq \deg_{G^\ast[W]}(v)$, we have
		$$
		\epsilon(W) \geq \sum_{v \in W} (\deg_{G^\ast[W]}(v) - k).
		$$
		In particular,
		$$
		\sum_{v \in W} (\deg_{G^\ast[W]}(v) - k) \leq k-2.
		$$
		By the minimality of $G$, either $G^\ast[W] \in \D_k$, or else, $G^\ast[W]$ contains a clique of size $k+1$.
		
		If $G^\ast[W] \in \D_k$, then
		$$
			\sum_{v \in W} (\deg_{G^\ast[W]}(v) - k) = k-2.
		$$
		Therefore, $\c{D} \subseteq W$ and $\deg(v) = \deg_{G^\ast[W]}(v)$ for all $v \in W$. The latter condition implies that the only vertices in $W$ that are adjacent to a vertex in $B$ are $v_1$ and $v_2$; moreover, the only neighbor of $v_1$ in $B$ is $u_1$ and the only neighbor of $v_2$ in $B$ is $u_2$. Since $\c{D} \subseteq W$, this implies $S = \0$ and $T \cap \c{D} \subseteq \set{v_1, v_2}$. Therefore, $|B| = k - |S| = k$. Each terminal vertex in $B$ has a neighbor in $T \cap \c{D}$, so the set of all terminal vertices in $B$ is a subset of $\set{u_1, u_2}$. Since $k \geq 3$, this implies $k = |B| = 3$ and $u_1$, $u_2$ are indeed the terminal vertices in $B$. But then every vertex in $\c{D}$ is adjacent both to $u_1$ and to $u_2$, contradicting the fact that $v_1$ and $v_2$ each have only one neighbor among $u_1$, $u_2$.
		
		Thus, $G^\ast[W]$ contains a clique of size $k+1$. Since $G$ does not contain such a clique, there exists a set $U \subseteq \c{(B \cup S \cup \set{v_1, v_2})}$ of size $k-1$ such that the graph $G[U \cup \set{v_1, v_2}]$ is isomorphic to $K_{k+1}$ minus the edge $v_1v_2$. Note that $U\not\subseteq\c{D}$, since $|\c{D}| \leq k-2$. Thus, the set $B' \defeq U \cap D$ is nonempty. Let $S' \defeq U \setminus B'$. Each vertex in $B'$ has $k$ neighbors in $U \cup \set{v_1, v_2}$, so there are no edges between $B'$ and $\c{(U \cup \set{v_1, v_2})}$. Due to Corollary~\ref{corl:2}, $\set{v_1, v_2} \not \subseteq \c{D}$, so we can assume, without loss of generality, that $v_2 \in D$ and let $B^\ast \defeq B' \cup \set{v_2}$. Then $G[B \cup B^\ast]$ is a connected component of $G[D]$, with $u_2v_2$ being a unique edge between terminal sets $B$ and $B^\ast$. Note that $S' = S_{B^\ast}$ and $G[B^\ast \cup S']$ is a clique of size $k$. Moreover, $v_1u_2 \not \in
		E$ and $\set{v_1, u_2} \subseteq T_{B^\ast}$. Thus, we can apply the above reasoning to $B^\ast$ in place of $B$ and $v_1$, $u_2$ in place of $v_1$, $v_2$. As a result, we see that $G[B \cup S \cup \set{v_1}]$ is isomorphic to $K_{k+1}$ minus the edge $v_1u_2$. Therefore,
		$$
		\epsilon(v_1) \geq \deg_{B \cup S}(v_1) + \deg_{B^\ast \cup S'}(v_1) - k = (k-1)+(k-1)-k = k-2.
		$$
		Thus, $\c{D} = \set{v_1}$, $S = S' = \0$, and $|B| = |B^\ast| = k$. This implies that $G \in \D_k$.
	\end{proof}	
	
	\begin{corl}\label{corl:T}
		Let $B$ be a dense terminal set. Then $G[T_B]$ is a clique of size at least $2$.
	\end{corl}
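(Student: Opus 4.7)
The plan is to derive this corollary directly from Lemma~\ref{lemma:T} by verifying its hypothesis. Lemma~\ref{lemma:T} is stated for any terminal set $B$ with the property that $G[B \cup S_B]$ is a clique of size $k$, and its conclusion is exactly what we want: $G[T_B]$ is a clique of size at least $2$. So the entire task reduces to checking that a dense terminal set satisfies the hypothesis of Lemma~\ref{lemma:T}.

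For that, I would simply invoke Corollary~\ref{corl:S}, which asserts that for every dense terminal set $B$, the induced subgraph $G[B \cup S_B]$ is a clique of size $k$. With that in hand, Lemma~\ref{lemma:T} applies verbatim and yields both that $G[T_B]$ is a clique and that $|T_B| \geq 2$.

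There is no real obstacle here; the substantive work has already been done in Corollary~\ref{corl:S} (which rests on the lengthy analysis of Section~\ref{subsec:S_is_large} and the clique-structure result Lemma~\ref{lemma:S_is_a_clique}) and in Lemma~\ref{lemma:T} (whose proof used the minimality of the counterexample and a DP-cover modification to force $G \in \D_k$). Corollary~\ref{corl:T} is merely the specialization of Lemma~\ref{lemma:T} from the technical hypothesis ``$G[B \cup S_B]$ is a clique of size $k$'' to the cleaner hypothesis ``$B$ is dense.''
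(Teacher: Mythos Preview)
Your proposal is correct and matches the paper's own proof essentially verbatim: the paper simply states that the result follows from Corollary~\ref{corl:S} and Lemma~\ref{lemma:T}.
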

	\begin{proof}
		Follows from Corollary~\ref{corl:S} and Lemma~\ref{lemma:T}.
	\end{proof}
	
	\subsection{The graph $G[S_B \cup T_B]$}
	
	\begin{lemma}\label{lemma:|T|_is_2}
		Let $B$ be a dense terminal set. Then:
		\begin{enumerate}[wide,label=\normalfont{(\roman*)}]
			\item \label{item:|T|_is_2:2} $|T_B| = 2$;
			\item \label{item:|T|_is_2:D_complement} $\c{D} = S_B \cup (T_B \cap \c{D})$;
			\item \label{item:|T|_is_2:k-1} each vertex in $T_B$ has exactly $k-1$ neighbors outside of $B$; and
			\item \label{item:|T|_is_2:epsilon} $\epsilon(v) = 1$ for all $v \in S_B$.
		\end{enumerate}
	\end{lemma}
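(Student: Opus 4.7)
The plan is to extract all four parts from the global bound $\epsilon(\c{D}) \leq k-2$ combined with the structure for $B$ already developed. Throughout I would write $T' \defeq T_B \cap \c{D}$, $T'' \defeq T_B \cap D$, $T'_{\geq 2} \defeq \set{v \in T' : \deg_B(v) \geq 2}$, and $R \defeq \c{D} \setminus (S_B \cup T')$.

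First I would carry out the basic degree accounting. Since $G[B \cup S_B] \cong K_k$ by Corollary~\ref{corl:S}, every $u \in B$ has exactly $(|B|-1)+(k-|B|)=k-1$ neighbors inside $B\cup S_B$ and therefore precisely one in $T_B$. If $B=C_B$ then every $u \in B$ is terminal and its $T_B$-neighbor lies in $\c{D}$, so $T''=\0$; otherwise the unique cut-vertex $u_0 \in B$ is the only vertex of $B$ with a neighbor in $C_B\setminus B \subseteq D$, which places its $T_B$-neighbor in $T''$, and no other vertex can lie in $T''$. Hence $|T''|\in\set{0,1}$ and $\sum_{v\in T'}\deg_B(v)=|B|-|T''|$.

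Next I would invoke Lemma~\ref{lemma:many_neighbors_outside} to conclude $\epsilon(v)\geq \deg_B(v)-1$ for every $v\in T_B$. Combining with the floor $\epsilon(v)\geq 1$ on $\c{D}$ gives
\[
\epsilon(\c{D}) \;\geq\; (k-|B|) + \sum_{v\in T'}\max\set{1,\deg_B(v)-1} + |R| \;=\; (k-|B|) + (|B|-|T''|-|T'_{\geq 2}|) + |R|,
\]
which together with $\epsilon(\c{D})\leq k-2$ collapses to the master inequality $|T'_{\geq 2}|+|T''|\geq |R|+2$. A complementary upper bound appears from $\sum_{v\in T'}\deg_B(v)=|B|-|T''|$, with each $v\in T'_{\geq 2}$ contributing at least $2$: namely $|T_B|\leq |B|-|T'_{\geq 2}|$. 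Already this reproves $|T_B|\geq 2$ from Corollary~\ref{corl:T}.

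The principal task, and the step I expect to be the main obstacle, is to promote these inequalities to $|T_B|=2$ and $R=\0$. Slack in the counting can come from a vertex $v\in T'$ with $\epsilon(v)>\deg_B(v)-1$ or a nonempty $R$, and neither is ruled out purely numerically. To close the gap I would analyze the extension of a coloring $I$ with $\dom(I)=\c{(B\cup T_B)}$ to $B\cup T_B$. Since $G[B]$ is a clique of size at least $4$, the existence of any $v\in T_B$ with $\deg_B(v)\geq 2$ produces a block of $G[B\cup T_B]$ that is neither a clique nor a cycle, so $G[B\cup T_B]$ is not a GDP-tree. Theorem~\ref{theo:DP_Brooks}, supplemented by a Lemma~\ref{lemma:spoiling}-type manoeuvre to inflate $|L_I(v)|$ at those $v\in T_B$ with $\epsilon(v)\geq 1$, should either extend $I$ to all of $G$---contradicting $\Cov{H}$-criticality---or force $|T_B|=2$ together with $R=\0$. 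Once (i) and (ii) are in hand, every lower bound in the $\epsilon$-count must be tight, yielding simultaneously (iv) ($\epsilon(v)=1$ for all $v\in S_B$) and (iii) ($\deg_{\c{B}}(v)=k-1$ for every $v\in T_B$).
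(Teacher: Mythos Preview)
Your counting set-up is correct: with $|S_B|=k-|B|$ (Corollary~\ref{corl:S}), $|T''|\leq 1$, $\sum_{v\in T'}\deg_B(v)=|B|-|T''|$, and $\epsilon(v)\geq\max\{1,\deg_B(v)-1\}$ for $v\in T'$, one indeed obtains $|T'_{\geq 2}|+|T''|\geq |R|+2$. You are also right that once~\ref{item:|T|_is_2:2} is established, parts~\ref{item:|T|_is_2:D_complement}--\ref{item:|T|_is_2:epsilon} drop out of the tight $\epsilon$-count, exactly as in the paper.

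The gap is in your ``principal task''. Counting alone does not force $|T_B|=2$: for instance $B=C_B$, $R=\0$, $|B|\geq 5$, and $T'=\{v_1,v_2,v_3\}$ with $\deg_B(v_1)=1$, $\deg_B(v_2)=2$, $\deg_B(v_3)=|B|-3$ satisfies all your inequalities with equality. Your proposed remedy---apply Theorem~\ref{theo:DP_Brooks} to $G[B\cup T_B]$---does not work: for $v\in T'$ one has $|L_I(v)|\geq \deg_{G[B\cup T_B]}(v)-\epsilon(v)$, which is \emph{strictly less} than the degree whenever $\epsilon(v)\geq 1$, so $(L_I,H)$ is not a degree cover on $T'$ and the theorem does not apply. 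There is no evident ``Lemma~\ref{lemma:spoiling}-type manoeuvre'' that inflates these lists. More structurally, the observation that $G[B\cup T_B]$ fails to be a GDP-tree is true already when $|T_B|=2$, so it cannot be what separates $|T_B|=2$ from $|T_B|\geq 3$.

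What is missing is the use of Corollary~\ref{corl:T}. The paper argues as follows: assuming $|T_B|\geq 3$, another $\epsilon$-count (now on $\epsilon(S)+\epsilon(T)$ via $\sum_{v\in T}\deg(v)=|B|+\sum_{v\in T}\deg_{\c{B}}(v)$) shows that at least two vertices $v_1,v_2\in T_B$ have $\deg_{\c{B}}(v_i)=k-1$ exactly. Because $|T_B|\geq 3$, some $u_0\in B$ has its unique $T_B$-neighbor in $T_B\setminus\{v_1,v_2\}$, and any $u_1\in B$ adjacent to $v_1$ is nonadjacent to $v_2$. Crucially, $v_1v_2\in E$ since $G[T_B]$ is a clique; hence for a coloring $I$ with $\dom(I)=\c{(B\cup\{v_1,v_2\})}$ one has $|L_I(v_i)|\geq 2$, and by choosing two colors at $v_1$ (paired with non-neighbors at $v_2$) and applying Lemma~\ref{lemma:edge_extends} to the pair $u_0,u_1$ in the clique $G[B]$, one forces two distinct colors in $L(v_1)$ to share a neighbor in $L(u_1)$, a contradiction. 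This double-choice trick with Lemma~\ref{lemma:edge_extends}, together with the adjacency $v_1v_2$, is the key idea your outline is missing.
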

	\begin{proof}
		Let $S \defeq S_B$ and $T \defeq T_B$. By Corollaries~\ref{corl:S} and~\ref{corl:T}, $G[S\cup B]$ is a clique of size $k$ and $G[T]$ is a clique of size at least $2$.
		
		Suppose that \ref{item:|T|_is_2:2} does not hold, i.e., $|T| \geq 3$. Recall that, by Lemma~\ref{lemma:many_neighbors_outside}, each vertex in $T$ has at least $k-1$ neighbors outside of $B$. If $T$ contains  at most one vertex with exactly $k-1$ neighbors outside of $B$, then
		$$
		\epsilon(S) + \epsilon(T) \geq |S| + \sum_{v \in T} \deg(v) - k|T| \geq (k-|B|) + (|B| + k|T| - 1) - k|T| = k-1;
		$$
		a contradiction. Thus, there exist two distinct vertices $v_1$, $v_2 \in T$ such that $$\deg_{\c{B}}(v_1) = \deg_{\c{B}}(v_2) = k-1.$$ Since $|T| \geq 3$ and every vertex in $B$ has exactly one neighbor in $T$, there exists a vertex $u_0 \in B$ such that $u_0v_1$, $u_0v_2 \not \in E$. Also, we can choose a vertex $u_1 \in B$ with $u_1v_1 \in E$; note that $u_1v_2 \not \in E$. Let $I \in \I(H)$ be such that $\dom(I) = \c{(B \cup \set{v_1, v_2})}$. Then
		$$\phi_{B \cup \set{v_1, v_2}}(v_1) = \phi_{B \cup \set{v_1, v_2}}(v_2) = k - (k-2) = 2.$$
		(Here we use that $v_1$ and $v_2$ are adjacent to each other.) Let $x_1$, $x_2$ be any two distinct elements of $L_I(v_1)$ and choose $y_1$, $y_2 \in L_I(v_2)$ so that $x_1y_1$, $x_2y_2 \not \in E(H)$.  Since $$L_{I \cup \set{x_1, y_1}}(u_0) = L_{I\cup \set{x_2, y_2}}(u_0) = L_I(u_0),$$ and for each $i \in \set{1,2}$, $$L_{I \cup\set{x_i, y_i}}(u_1) = L_{I \cup\set{x_i}}(u_1),$$ Lemma~\ref{lemma:edge_extends} implies that for each $i\in \set{1,2}$, the matching $E_H(L_I(u_0),  L_{I \cup\set{x_i}}(u_1))$ is perfect. But then the unique vertex in $L_I(u_1)$ that has no neighbor in $L_I(u_0)$ is adjacent to both $x_1$ and $x_2$, which is impossible. This contradiction proves~\ref{item:|T|_is_2:2}.
		
		In view of~\ref{item:|T|_is_2:2}, we now have
		\begin{equation}\label{ej6}
		\epsilon(\c{D}) \geq \epsilon(S) + \epsilon(T) \geq (k - |B|) + (|B| + 2(k-1)) - 2k = k-2,
		\end{equation}
		so none of the  inequalities in~\eqref{ej6} can be strict. This yields \ref{item:|T|_is_2:D_complement}, \ref{item:|T|_is_2:k-1}, and \ref{item:|T|_is_2:epsilon}.
	\end{proof}
	
	\begin{lemma}\label{lemma:entire}
		Let $B$ be a dense terminal set. Then $B = C_B$.
	\end{lemma}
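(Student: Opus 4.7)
The plan is to argue by contradiction: assume $B \neq C_B$, so $B$ contains a cut vertex $u_0$ of $G[C_B]$. The aim is to combine the rigid structure around $B$ given by Lemma~\ref{lemma:|T|_is_2} with the existence of a second leaf block in $C_B$ to reach a contradiction; no minimality of $G$ is needed.

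First I would pin down $T_B$. Since $G[B \cup S_B] \cong K_k$ by Corollary~\ref{corl:S}, every $u \in B$ satisfies $\deg_B(u) + \deg_{S_B}(u) = k-1$, so $u$ has exactly one neighbor in $\c{(B \cup S_B)}$, which must lie in $T_B$. Hence $|E_G(B, T_B)| = |B|$. Combined with Lemma~\ref{lemma:|T|_is_2}\ref{item:|T|_is_2:2} and \ref{item:|T|_is_2:k-1}, summing $\deg_B(v)$ over $v \in T_B$ yields $\epsilon(T_B \cap \c{D}) = |B| - 2 \geq 2$. Since $u_0$ is a cut vertex of $G[D]$, its unique neighbor in $\c{(B \cup S_B)}$ sits in $T_B \cap D$; together with $T_B \cap \c{D} \neq \0$ (forced by $\epsilon(T_B \cap \c{D}) \geq 2$) and $|T_B| = 2$, this forces $T_B = \set{w, p}$ with $w \in D$, $p \in \c{D}$, and $\epsilon(p) = |B| - 2 \geq 2$.

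Next I would locate a second leaf block $B' \subseteq C_B$ disjoint from $B$. The edge $u_0 w$ spans a $K_2$ block of $G[D]$ (because $u_0$ has no other neighbor in $D \setminus B$), and this block is not a leaf of the block tree of $C_B$: otherwise $w$ would have degree $1$ in $G[D]$, contradicting Lemma~\ref{lemma:no_small_degrees}\ref{item:no_small_degrees:2}. Since $B \neq C_B$, the block tree of $C_B$ has at least two leaves, and any leaf other than $B$ gives the desired $B'$ with $|B'| \geq 3$ and $B' \cap B = \0$.

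Finally I would split on whether $B'$ is sparse or dense. If $B'$ is sparse, then $G[B']$ is a cycle and its non-cut vertices have degree $2$ in $G[D]$; Lemma~\ref{lemma:no_small_degrees}\ref{item:no_small_degrees:bounds} then forces $\epsilon(v) = 1$ for every $v \in \c{D}$, directly contradicting $\epsilon(p) = |B| - 2 \geq 2$. If $B'$ is dense, I apply Lemma~\ref{lemma:|T|_is_2} to $B'$ and repeat the computation above to obtain $\c{D} = S_{B'} \cup \set{p'}$ with $\epsilon(p') = |B'| - 2$; comparing $|\c{D}|$ in the two resulting descriptions yields $|B'| = |B|$. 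If $p = p'$, then $S_B = S_{B'}$, and each $v \in S_B$ is adjacent to all of $B \cup B' \cup (S_B \setminus \set{v})$, giving $\deg(v) \geq k + |B| - 1$ and contradicting $\epsilon(v) = 1$. If $p \neq p'$, then $p \in S_{B'}$, so Lemma~\ref{lemma:|T|_is_2}\ref{item:|T|_is_2:epsilon} applied to $B'$ gives $\epsilon(p) = 1$, again contradicting $\epsilon(p) \geq 2$. The dense subcase is the main technical obstacle, since it requires tracking two partitions of $\c{D}$ simultaneously and exploiting Lemma~\ref{lemma:|T|_is_2}\ref{item:|T|_is_2:epsilon} for both $B$ and $B'$; the sparse subcase is essentially immediate once $\epsilon(p) \geq 2$ is in hand.
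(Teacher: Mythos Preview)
Your overall strategy mirrors the paper's (pin down $T_B$, find another leaf block $B'\subseteq C_B$, and derive a contradiction), and the invariant $\epsilon(p)=|B|-2\geq 2$ is a clean way to dispatch both the sparse-$B'$ subcase and the $p\neq p'$ subcase. However, there is a real gap in the $p=p'$ branch of your dense subcase: the contradiction there comes from choosing a vertex $v\in S_B$ and counting its neighbors in $B\cup B'\cup S_B$, and this is vacuous when $S_B=\emptyset$. Since $|\c{D}|=|S_B|+1$, the case $S_B=\emptyset$ (equivalently $|B|=k$) forces $|\c{D}|=1$ and hence $p=p'$ automatically, so it cannot be absorbed into the $p\neq p'$ branch either.

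This is not a peripheral omission. The paper's proof shows that under the contradiction hypothesis one \emph{always} has $S_B=\emptyset$: every $w\in\c{D}$ lies in $S_{B'}\cup\{p'\}$ and hence has at least $|B'|-1\geq 3$ neighbors in $B'$, whereas by Lemma~\ref{lemma:|T|_is_2}\ref{item:|T|_is_2:epsilon} any $w\in S_B$ has only $(k+1)-(k-1)=2$ neighbors outside $B\cup S_B$. So the subcase you leave open is in fact the only one that survives. The missing step is short: when $S_B=\emptyset$ and $p=p'$, the vertex $p$ is adjacent to all $k-1$ terminal vertices of $B'$ (by the same reasoning you used to show $p$ is adjacent to every terminal vertex of $B$) and also to $w$ (since $G[T_B]$ is a clique by Corollary~\ref{corl:T}); as $w\notin B'$, this gives $p$ at least $k$ neighbors outside $B$, contradicting Lemma~\ref{lemma:|T|_is_2}\ref{item:|T|_is_2:k-1}.
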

	\begin{proof}
		Suppose, towards a contradiction, that $B \neq C_B$. Then $T_B \cap D \neq \0$. On the other hand, every terminal vertex in $B$ has a neighbor in $T_B \cap \c{D}$, so we also have $T_B \cap \c{D} \neq \0$. By Lemma~\ref{lemma:|T|_is_2}\ref{item:|T|_is_2:2}, $|T_B| = 2$, so $T_B \eqqcolon \set{v, u}$, where $v \in \c{D}$ and $u \in D$, with $v$ adjacent to all the terminal vertices in $B$. By Lemma~\ref{lemma:|T|_is_2}\ref{item:|T|_is_2:D_complement}, $\c{D} = S_B \cup \set{v}$.
		By Corollary~\ref{corl:S}, $G[B \cup S_B] \cong K_k$; in particular, $|S_B| = k - |B|$. Therefore,
		\begin{equation}\label{eq:cD}
			|\c{D}| =  k-|B|+1.
		\end{equation}
		Let $B'$ be any other terminal set such that $C_{B'} = C_B$. Note that $B'$ is dense, since, otherwise, by Lemma~\ref{lemma:no_small_degrees}\ref{item:no_small_degrees:bounds}, $|\c{D}| = k - 2$, contradicting~\eqref{eq:cD}. Therefore, the above reasoning can be applied to $B'$ in place of $B$. In particular, $T_{B'} \eqqcolon \set{v', u'}$, where $v' \in \c{D}$ and $u' \in D$, with $v'$ adjacent to all the terminal vertices in $B'$. Moreover, $\c{D} = S_B \cup \set{v} = S_{B'} \cup \set{v'}$. Consider any vertex $w \in \c{D}$. If $w \in S_{B'}$, then, by definition, $w$ is adjacent to every vertex in $B'$. If, on the other hand, $w = v'$, then $w$ is adjacent to all the terminal vertices in $B'$. In either case, $w$ has at least $|B'|-1$ neighbors in $B'$. However, if $w \in S_B$, then due to Lemma~\ref{lemma:|T|_is_2}\ref{item:|T|_is_2:epsilon}, $w$ has exactly $(k+1) - (k-1) = 2$ neighbors outside of $B \cup S_B$. This implies that $S_B = \0$, and similarly $S_{B'} = \0$. Thus, $v = v'$ and $|B| = |B'| = k$. Then $v$ is adjacent to $k-1$ terminal vertices in $B'$ and to $u$, contradicting Lemma~\ref{lemma:|T|_is_2}\ref{item:|T|_is_2:k-1}.
	\end{proof}
	
	\begin{corl}\label{corl:D^c}
		Let $B$ be a dense terminal set. Then $\c{D} = S_B \cup T_B$.
	\end{corl}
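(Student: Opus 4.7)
The plan is to combine Lemma~\ref{lemma:entire} with Lemma~\ref{lemma:|T|_is_2}\ref{item:|T|_is_2:D_complement} by showing that $T_B$ is entirely contained in $\c{D}$. Since Lemma~\ref{lemma:|T|_is_2}\ref{item:|T|_is_2:D_complement} already gives $\c{D} = S_B \cup (T_B \cap \c{D})$, it suffices to verify that $T_B \cap D = \0$, which would upgrade $T_B \cap \c{D}$ to all of $T_B$ and yield the desired equality.

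To see that $T_B \cap D = \0$, I would argue by contradiction. Suppose $u \in T_B \cap D$. By the definition of $T_B$, $u \notin B$ and $u$ is adjacent to some vertex of $B$. Since both $u$ and its neighbor in $B$ lie in $D$, they belong to the same connected component of $G[D]$, i.e., $u \in C_B$. But Lemma~\ref{lemma:entire} asserts $B = C_B$, so $u \in B$, contradicting $u \in T_B$.

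Hence $T_B \subseteq \c{D}$, so $T_B \cap \c{D} = T_B$, and substituting into Lemma~\ref{lemma:|T|_is_2}\ref{item:|T|_is_2:D_complement} gives $\c{D} = S_B \cup T_B$. There is no real obstacle here — all the heavy lifting (establishing the structure of $S_B$ and $T_B$ via Corollaries~\ref{corl:S} and~\ref{corl:T}, Lemma~\ref{lemma:|T|_is_2}, and especially Lemma~\ref{lemma:entire}) has already been carried out, and this corollary is essentially a bookkeeping consequence of stitching those results together.
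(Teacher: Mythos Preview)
Your proof is correct and follows exactly the same approach as the paper, which simply states that the result ``follows immediately by Lemma~\ref{lemma:entire} and Lemma~\ref{lemma:|T|_is_2}\ref{item:|T|_is_2:D_complement}.'' You have just spelled out the implicit step that $B = C_B$ forces $T_B \cap D = \0$, which is precisely what the paper leaves to the reader.
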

	\begin{proof}
		Follows immediately by Lemma~\ref{lemma:entire} and Lemma~\ref{lemma:|T|_is_2}\ref{item:|T|_is_2:D_complement}.
	\end{proof}
	
	\subsection{Finishing the proof of Theorem~\ref{theo:sharp_DP_Dirac}}
	
	\begin{lemma}\label{lemma:exists_sparse}
		There exists a sparse terminal set.
	\end{lemma}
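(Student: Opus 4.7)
The plan is to suppose, for contradiction, that every terminal set is dense. By Lemma~\ref{lemma:entire}, every connected component of $G[D]$ then coincides with a single dense terminal set, hence is a clique $K_b$ for some $b \geq 4$; by Corollaries~\ref{corl:S} and~\ref{corl:D^c}, $|\c{D}| = k - b + 2$, so this common value of $b$ is the same across all components. Let $\ell$ denote the number of components of $G[D]$.

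The first thing I would do is dispose of the case $\ell = 1$. Here $D = B = K_b$, so $\c{B} = \c{D}$, and any $v \in T_B$ satisfies, on the one hand, $\deg_{\c{B}}(v) = k - 1$ by Lemma~\ref{lemma:|T|_is_2}\ref{item:|T|_is_2:k-1}, and on the other, $\deg_{\c{D}}(v) \leq |\c{D}| - 1 = k - b + 1$. Together these force $b \leq 2$, contradicting $b \geq 4$. For $\ell \geq 2$, I would partition $\c{D}$ into $W_0$, the set of vertices lying in $T_{B_i}$ for every component $B_i$, and $W_1 \defeq \c{D} \setminus W_0$. Double-counting $\sum_i |T_{B_i}| = 2\ell$ via $\sum_{w \in \c{D}} |\{i : w \in T_{B_i}\}|$ immediately yields $|W_0| \leq 2$. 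For $w \in W_1$, picking any $i$ with $w \in S_{B_i}$ and applying Lemma~\ref{lemma:|T|_is_2}\ref{item:|T|_is_2:epsilon} gives $\epsilon(w) = 1$, hence $\deg(w) = k + 1$; for $v \in W_0$, Lemma~\ref{lemma:|T|_is_2}\ref{item:|T|_is_2:k-1} applied to every $B_i$ forces $d_v \defeq \deg_{B_i}(v)$ to be independent of $i$, with $d_v \in \{2, \ldots, b-1\}$ and $\deg(v) = d_v + k - 1$.

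With these rigid constraints in hand, I then plan a short case analysis on $|W_0| \in \{0, 1, 2\}$. In each case, the identity $\epsilon(\c{D}) = k - 2$ (obtained by summing the computed $\epsilon$-contributions, which saturates the global bound~\eqref{eq:epsilon_bound}) together with the individual values $\deg_{\c{D}}(w) = k + 1 - b\ell$ for $w \in W_1$ and $\deg_{\c{D}}(v) = k - 1 - d_v(\ell - 1)$ for $v \in W_0$ over-determines $G[\c{D}]$; combining the non-negativity of these numbers with the maximum of $\binom{|\c{D}|}{2}$ edges available inside $\c{D}$ should produce an impossibility analogous to the one used in the $\ell = 1$ case. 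The main obstacle I anticipate is the $|W_0| = 2$ subcase, where the parameter constraints are loosest and a numerical contradiction is subtler; here I would combine the $G[\c{D}]$ edge identity with an enhancement argument via Lemma~\ref{lemma:spoiling} applied to a vertex of some $B_i$ together with two carefully chosen non-adjacent members of $\c{D}$, in order to contradict Lemma~\ref{lemma:spoilt}\ref{item:spoilt:problem}.
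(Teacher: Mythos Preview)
Your initial set-up is sound and, in fact, your double count $\sum_i |T_{B_i}| = 2\ell$ combined with the observation that each $w\in\c{D}$ lies in at most one $S_{B_i}$ gives the \emph{exact} identity
\[
|W_0| \;=\; 2\ell - (k-b+2)(\ell-1),
\]
which already forces $b\in\{k-2,k-1,k\}$ and pins down $\ell$ in each case---this is equivalent to the reduction the paper obtains via $S_{B_1}\subseteq T_{B_2}$.  (One correction: your formula $\deg_{\c{D}}(w)=k+1-b\ell$ for $w\in W_1$ is wrong; since $w\in T_{B_j}$ implies $\deg_{B_j}(w)=\deg(w)-(k-1)=2$ by Lemma~\ref{lemma:|T|_is_2}\ref{item:|T|_is_2:k-1}, the correct value is $k+3-b-2\ell$.)

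The real gap is in the endgame.  Your counting indeed disposes of $|W_0|=1$ (which is exactly the paper's Case~2, $b=k-1$) and of the sub-case $b=k-1,\ \ell=3$ of $|W_0|=0$.  But two configurations survive all degree/edge identities:
\begin{itemize}
\item $b=k-2,\ \ell=2$ (your $|W_0|=0$): here $G[\c{D}]$ is a perfect matching on four vertices, $S_{B_1}=T_{B_2}$ and $S_{B_2}=T_{B_1}$, and every parameter is consistent;
\item $b=k$ (your $|W_0|=2$): here $\c{D}=T_B$ has exactly two vertices, which by Corollary~\ref{corl:T} are \emph{adjacent}, so your proposed ``two carefully chosen non-adjacent members of $\c{D}$'' simply do not exist.
\end{itemize}
In the first surviving case one can check directly that Corollary~\ref{corl:spoiling} gives no contradiction: for $u\in B_1$ with non-adjacent neighbours $v\in S_{B_1}$, $v'\in T_{B_1}$ one gets $\phi_{B_1}(v)+\phi_{B_1}(v')=(k-3)+1=k-2$, which does not exceed $\deg_{B_1}(u)+2=k-1$.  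The paper handles precisely these two residual cases not by enhancement but by exploiting the \emph{cover} structure: it uses Lemma~\ref{lemma:structure_of_lists} (for $b=k-2$) and Lemma~\ref{lemma:edge_extends} (for $b=k$) to exhibit an $\Cov{H}$-coloring directly.  Your plan needs an argument of that type to close out; Lemma~\ref{lemma:spoiling} alone will not suffice.
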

	\begin{proof}
		Suppose, towards a contradiction, that every terminal set is dense. Lemma~\ref{lemma:entire} implies that in such case every connected component of $G[D]$ is a clique of size at least $4$. Moreover, due to Corollary~\ref{corl:D^c}, Corollary~\ref{corl:S}, and Lemma~\ref{lemma:|T|_is_2}\ref{item:|T|_is_2:2}, the size of every connected component of $G[D]$ is precisely $k - |\c{D}| + 2 \eqqcolon b$. Note that due to Lemma~\ref{lemma:|T|_is_2}\ref{item:|T|_is_2:k-1}, the graph $G[D]$ is disconnected.
		
		Let $B_1$ and $B_2$ be the vertex sets of any two distinct connected components of $G[D]$. Lemma~\ref{lemma:|T|_is_2}\ref{item:|T|_is_2:epsilon} implies that $S_{B_1} \cap S_{B_2} = \0$, since every vertex in $S_{B_1}$ has only $2$ neighbors outside of $B_1 \cup S_{B_1}$. Since, by Corollary~\ref{corl:D^c},
		$$\c{D} = S_{B_1} \cup T_{B_1} = S_{B_2}\cup T_{B_2},$$ it follows that $S_{B_1} \subseteq T_{B_2}$ and $S_{B_2} \subseteq T_{B_1}$. Therefore, $|S_{B_1}| \leq |T_{B_2}|$, i.e., $k-b \leq 2$, which implies
		$$b \in \set{k-2, k-1, k}.$$ Now it remains to consider the three possibilities.
		
		{\sc Case~1:} $b = k-2$. 
		Let $B$ be the vertex set of any connected component of $G[D]$. Set $T_B \eqqcolon \set{v_1, v_2}$ and let $u_1$, $u_2 \in B$ be such that $u_1 v_1$, $u_2 v_2 \in E$. Choose any $x \in L(v_1)$. Note that
		$|L_{\set{x}}(v_2)| \geq 2$, so we can choose $y \in L_{\set{x}}(v_2)$ in such a way that $E_H(L_{\set{x, y}}(u_1), L_{\set{x, y}}(u_2))$ is not a perfect matching. For all $u \in D \setminus B$, we have $|L_{\set{x,y}}(u)| \geq k-2$ and the size of every connected component of $G[D\setminus B]$ is $k-2$. Therefore, there exists a coloring $I$ with $\dom(I) = D \setminus B$ such that $I \cup \set{x, y} \in \I(H)$. But then
		$$\dom(I \cup \set{x,y}) = \c{(B \cup S_B)},$$ and the matching $E_H(L_{I\cup\set{x,y}}(u_1), L_{I\cup\set{x,y}}(u_1))$ is not perfect, contradicting Lemma~\ref{lemma:structure_of_lists}.
		
		{\sc Case~2:} $b = k-1$. Let $B_1$ and $B_2$ be the vertex sets of any two distinct connected components of $G[D]$. Let $v$ be the unique vertex in $S_{B_1}$. Then $v \in T_{B_2}$ (recall that $S_{B_1} \cap S_{B_2} = \0$), so, by Lemma~\ref{lemma:|T|_is_2}\ref{item:|T|_is_2:k-1}, $v$ has exactly $k-1$ neighbors outside of $B_2$. By Corollary~\ref{corl:T}, one of the neighbors of $v$ is the other vertex in $T_{B_2}$. Therefore, $v$ can have at most $k-2$ neighbors in $B_1$; a contradiction with the choice of $v$.
		
		{\sc Case~3:} $b = k$. In this case, $G[\c{D}] \cong K_2$ and every vertex in $D$ has exactly one neighbor in~$\c{D}$, so there are exactly $k$ edges between $\c{D}$ and every connected component of $G[D]$. On the other hand, if $B$ is the vertex set of a connected component of $G[D]$, then, by Lemma~\ref{lemma:|T|_is_2}\ref{item:|T|_is_2:k-1}, there are exactly $2(k-2) < 2k$ edges between $\c{D}$ and $D \setminus B$. Thus, the graph $G[D\setminus B]$ is connected. Moreover, $k \geq 4$, for $2\cdot (3-2) = 2 < 3$. Let $B' \defeq D \setminus B$ (so $G[B']$ is a clique of size $k$). Set $\c{D} \eqqcolon \set{v_1, v_2}$ an let $u_1$, $u_2 \in B$, $u_1'$, $u_2' \in B'$ be such that $u_1 v_1$, $u_2 v_2$, $u_1' v_1$, $u_2' v_2 \in E$. Choose any $x \in L(v_1)$. Note that
		$|L_{\set{x}}(v_2)|\geq 3$. There is at most one element $y \in L_{\set{x}}(v_2)$ such that $E_H(L_{\set{x, y}}(u_1), L_{\set{x,y}}(u_2))$ is a perfect matching; similarly for $u_1'$ and $u_2'$. Therefore, there exists $z \in L_{\set{x}}(v_2)$ such that neither $E_H(L_{\set{x, z}}(u_1), L_{\set{x,z}}(u_2))$ nor
		$E_H(L_{\set{x, z}}(u_1'), L_{\set{x,z}}(u_2'))$ are perfect matchings. Thus, by Lemma~\ref{lemma:edge_extends}, $\set{x,z}$ can be extended to an $\Cov{H}$-coloring of $G$; a contradiction.
	\end{proof}
	
	Now we are ready to finish the proof of Theorem~\ref{theo:sharp_DP_Dirac}. Let $B$ be a dense terminal set (which exists by Lemma~\ref{lemma:not_all_sparse}) and let $B'$ be a sparse terminal set (which exists by Lemma~\ref{lemma:exists_sparse}). By Lemma~\ref{lemma:entire}, $B = C_B$ and every terminal set in $C_{B'}$ is sparse. In particular, $G[C_{B'}]$ contains at least $3$ vertices of degree $2$. Thus, by Lemma~\ref{lemma:no_small_degrees}\ref{item:no_small_degrees:bounds}, every vertex in $\c{D}$ has at least $3$ neighbors in $C_{B'}$. On the other hand, by Lemma~\ref{lemma:|T|_is_2}\ref{item:|T|_is_2:epsilon}, a vertex in $S_B$ has only $2$ neighbors in $\c{(B \cup S_B)}$. Therefore, $S_B = \0$. Due to Corollary~\ref{corl:D^c}, we obtain $\c{D} = T_B$; thus, by Corollary~\ref{corl:T} and Lemma~\ref{lemma:|T|_is_2}\ref{item:|T|_is_2:2}, $G[\c{D}] \cong K_2$. On the other hand, by Lemma~\ref{lemma:no_small_degrees}\ref{item:no_small_degrees:bounds}, $|\c{D}| = k-2$, so $k = 4$. But each vertex in $T_B$ has at least $4$ neighbors outside of $B$ ($1$ in $T_B$ by Corollary~\ref{corl:T} and $3$ in $C_{B'}$ by Lemma~\ref{lemma:no_small_degrees}\ref{item:no_small_degrees:bounds}), which contradicts Lemma~\ref{lemma:|T|_is_2}\ref{item:|T|_is_2:k-1}.
	
	\section{Concluding remarks}
	
	In~\cite{BKP}, the notion of DP-coloring was naturally extended to multigraphs (with no loops). The only difference from the graph case is that if distinct vertices $u$, $v \in V(G)$ are connected by $t$ edges in $G$, then the set $E_H(L(u), L(v))$ is a union of $t$ matchings (not necessarily perfect and possibly empty). Bounding the difference $2|E(G)| - k|V(G)|$ for DP-critical multigraphs~$G$ appears to be a challenging problem.
	
	\begin{defn}
		For $k \geq 3$, a \emph{$k$-brick} is a $k$-regular multigraph  whose underlying simple graph is either a clique or a cycle and
		in which the multiplicities of all edges are the same.
	\end{defn}
	
	Note that for a $k$-brick $G$, $2|E(G)| = k|V(G)|$. In~\cite{BKP}, it is shown that $k$-bricks are the only $k$-DP-critical multigraphs with this property.
	
	Theorem~\ref{theo:sharp_DP_Dirac} fails for multigraphs, as the following example demonstrates. Fix an integer $k \in \N$ divisible by $3$ and let $G$ be the multigraph with vertex set $[3]$ such that $|E_G(1, 2)|=k/3$ and $|E_G(1, 3)| = |E_G(2, 3)| = 2k/3$, so we have $2|E(G)| - k|V(G)| = k/3$. Let $H$ be the graph with vertex set $[3] \times [3] \times [k/3]$ in which two distinct vertices $(i_1, j_1, a_1)$ and $(i_2, j_2, a_2)$ are adjacent if and only if one of the following three (mutually exclusive) situations occurs:
	\begin{enumerate}
		\item $\set{i_1, i_2} = [2]$ and $j_1 = j_2$;
		\item $\set{i_1, i_2} \neq [2]$ and $j_1 \neq j_2$; or
		\item $(i_1, j_1) = (i_2, j_2)$.
	\end{enumerate}
	For each $i \in [3]$, let $L(i) \defeq \set{i} \times [3] \times [k/3]$. Then $\Cov{H} \defeq (L, H)$ is a $k$-fold cover of $G$. We claim that $G$ is not $\Cov{H}$-colorable. Indeed, suppose that $I \in \I(H)$ is an $\Cov{H}$-coloring of $G$ and for each $i \in [3]$, let $I \cap L(i) \eqqcolon \set{(i, j_i, a_i)}$. By the definition of~$H$, we have $j_1 \neq j_2$, while also $j_1 = j_3 = j_2$, which is a contradiction. It is also easy to check that $G$ is $\Cov{H}$-critical and that it does not contain any $k$-brick as a subgraph.
	
	In light of the above example, we propose the following problem:
	
	\begin{problem} Let $k\geq 3$.
		Let $G$ be a multigraph and let $\Cov{H}$ be a $k$-fold cover of $G$ such that $G$ is $\Cov{H}$-critical. Suppose that $G$ does not contain any $k$-brick as a subgraph. What is the minimum possible value of the difference $2|E(G)| - k|V(G)|$, as a function of $k$?
	\end{problem}
	
	\paragraph*{Acknowledgement.} The authors are very grateful to the anonymous referee for the valuable comments and suggestions.
	
	\printbibliography
	
\end{document}